\newtheorem{theorem}{Theorem}[section] 
\newtheorem{definition}[theorem]{Definition}
\newtheorem{lemma}[theorem]{Lemma}
\newtheorem{corollary}[theorem]{Corollary}
\newtheorem{remark}[theorem]{Remark}
\newtheorem{keyremark}[theorem]{Key Remark}
\numberwithin{equation}{section}
\def\matlab{MATLAB}
\def\R{\mathbb{R}}
\def\C{\mathbb{C}}
\def\N{\mathbb{N}}
\def\bigO{\mathcal{O}}
\def\imagunit{\mathbf{i}}
\def\transsym{\mathsf{T}}
\newcommand{\tp}[1][]{^{{#1}\transsym}}
\DeclareMathOperator{\argmin}{\arg\min}
\DeclareMathOperator{\vecop}{\mathrm{vec}}
\DeclareMathOperator{\bd}{\mathrm{bd}}
\def\e{\mathrm{e}}
\def\eit{\eix{\theta}}
\def\emit{\emix{\theta}}
\newcommand{\eix}[1]{\e^{\imagunit #1}}
\newcommand{\emix}[1]{\e^{-\imagunit #1}}
\def\eps{\varepsilon}
\def\smin{\sigma_{\min}}
\def\aleps{\alpha_\eps}
\def\rhoeps{\rho_\eps}
\def\dtu{\tau(A,B)}
\def\Kcal{\mathcal{K}}
\def\Kcon{\Kcal(A)}
\def\Kinv{\Kcon^{-1}}
\def\n{n}
\def\costh{\cos \theta}
\def\sinth{\sin \theta}
\def\shift{s}
\renewcommand{\Re}{\mathrm{Re}\,}
\def\beq{\begin{equation}}
\def\eeq{\end{equation}}
\def\beqs{\begin{equation*}}
\def\eeqs{\end{equation*}}
\def\bseq{\begin{subequations}}
\def\eseq{\end{subequations}}
\newcommand{\algnote}[1]{\footnotesize \sc{Note: \it#1 } }
\newcommand{\algorithmicbreak}{\textbf{break}}
\newcommand{\BREAK}{\STATE \algorithmicbreak}
\title{Computing the Kreiss Constant of a Matrix\thanks{The author's visits to the 
Courant Institute, NYU,
where some of this work was conducted, 
were supported by the U.S. National Science Foundation grant DMS-1620083.}}
\author{
Tim Mitchell\thanks{
Max Planck Institute for Dynamics of Complex Technical Systems, Magdeburg, 39106 Germany \href{mailto:mitchell@mpi-magdeburg.mpg.de}{\texttt{mitchell@mpi-magdeburg.mpg.de}}.}
}
\date{\small{July 15th, 2019\\ Revised: May 20, 2020, August 15, 2020}}
\begin{document}

\maketitle

\begin{abstract}
We establish the first globally convergent algorithms for computing
the Kreiss constant of a matrix to arbitrary accuracy.
We propose three different iterations for continuous-time Kreiss constants
and analogues for discrete-time Kreiss constants.
With standard eigensolvers, the methods do $\bigO(\n^6)$ work,
but we show how this theoretical work complexity can be lowered
to $\bigO(\n^4)$ on average and $\bigO(\n^5)$ in the worst case
via divide-and-conquer variants.
Finally, locally optimal Kreiss constant approximations
can be efficiently obtained for large-scale matrices via optimization.
\end{abstract}

\noindent
\textbf{Keywords:}
discontinuity of Kreiss constants, inverses of Kronecker sums,
distance to uncontrollability algorithms, transient growth, pseudospectra\\

\noindent
\textbf{Notation:} $\| \cdot \|$ denotes the spectral norm, $\smin(\cdot)$ the smallest singular value, 
$\Lambda(\cdot)$ the spectrum, 
$J = \begin{bsmallmatrix} 0 & I \\ -I & 0 \end{bsmallmatrix}$, a matrix $A \in \C^{2\n\times 2\n}$
is Hamiltonian if $(JA)^* = JA$ and symplectic if $A^*JA = J$,
a matrix pencil $A - \lambda B$ is symplectic if $A^*JA = B^*JB$,
$\mathrm{e}$~is~Euler's~number~$2.71828\ldots$, and $I_n$ is the $\n \times \n$ identity,
though we will often omit the subscript when the dimension is clear.

\section{Introduction}
Given a matrix $A\in\C^{\n\times \n}$, 
the ordinary difference equation
\beq
	\label{eq:ode_disc}
	x_{k+1} = Ax_k
\eeq
is asymptotically stable if $A$ is Schur stable, i.e., if $\rho(A) < 1$, where 
$\rho$ denotes the spectral radius.
While $\rho(A)$ tells one about the asymptotic behavior 
of \eqref{eq:ode_disc}, it does not convey information about its transient behavior.
For that, we can look at the Kreiss Matrix Theorem, 
which says for any matrix $A \in\C^{\n\times \n}$ \cite[Eq.~18.2]{TreE05}
\beq
	\label{eq:kreiss_thm_disc}
	\Kcon \leq \sup_{k \geq 0} \|A^k\| \leq \e\n\Kcon,
\eeq
where the \emph{Kreiss constant} $\Kcon$ is given by \cite[p.~143]{TreE05}
\beq	
	\label{eq:k2d_disc}
	\Kcon = \sup_{z \in \C, |z| > 1} 
	(|z|  - 1) \| (z I - A)^{-1} \|.
\eeq
As also noted in \cite[p.~143]{TreE05}, $\Kcon$ has an equivalent formulation of
\beq
	\label{eq:k1d_disc}
	\Kcon = \sup_{\eps > 0} \frac{\rhoeps(A) - 1}{\eps},
\eeq
where the \emph{$\eps$-pseudospectral radius} $\rhoeps$ is defined by
\bseq
\begin{align}
	\rhoeps(A) 	&= \max \{ |z|  : z \in \Lambda(A+\Delta), \|\Delta\| \leq \eps \} \\
				&= \max \{ |z|  : z \in \C, \| (zI - A)^{-1} \| \geq \eps^{-1} \}.
\end{align}
\eseq
For any matrix $A \in \C^{\n \times \n}$, $\Kcon \geq 1$, and $\Kcon$ may be arbitrarily large.
As is well known, a matrix $A$ is \emph{power-bounded}, i.e., $\Kcon < \infty$, if and only if
$\rho(A) \leq 1$ and all eigenvalues of $A$ with modulus 1 are nondefective.
If $A$ is normal and $\rho(A) \leq 1$, then $\Kcon = 1$.

As discussed on \cite[p.~177]{TreE05}, 
the original statement by Kreiss in 1962 \cite{Kre62}
actually had a far looser upper bound than \eqref{eq:kreiss_thm_disc}: approximately $c^{\n^\n} \Kcon$.
The reduction of the constant factor to its current form in fact occurred over nearly thirty years in at least nine separate steps,
with Spijker proving the conjecture of \cite[p.~590]{LevT84} to finally obtain the (in a certain sense) tight factor of $\e\n$ in 1991 \cite{Spi91}.

The Kreiss Matrix Theorem also comes in a continuous-time variant 
for an ordinary differential equation
\beq
	\label{eq:ode_cont}
	\dot x = Ax,
\eeq
which is asymptotically stable if $A$ is Hurwitz stable, i.e., if $\alpha(A) < 0$, where 
$\alpha$ denotes the spectral abscissa.
In this case, the Kreiss Matrix Theorem states \cite[Eq.~18.8]{TreE05}
\beq
	\label{eq:kreiss_thm_cont}
	\Kcon \leq \sup_{t \geq 0} \|\e^{tA}\| \leq \e\n\Kcon
\eeq
where by \cite[Eq.~14.7]{TreE05}, $\Kcon$ is now equivalently given by either
\beq	
	\label{eq:k2d_cont}
	\Kcon = \sup_{z \in \C, \Re z > 0} 
	(\Re z) \| (z I - A)^{-1} \|
\eeq
or
\beq
	\label{eq:k1d_cont}
	\Kcon = \sup_{\eps > 0} \frac{\aleps(A)}{\eps},
\eeq
and where the \emph{$\eps$-pseudospectral abscissa} $\aleps$ is defined by
\bseq
\begin{align}
	\aleps(A) 	&= \max \{ \Re z  : z \in \Lambda(A+\Delta), \|\Delta\| \leq \eps \} \\
				&= \max \{ \Re z  : z \in \C, \| (zI - A)^{-1} \| \geq \eps^{-1} \}.
\end{align}
\eseq
Like the discrete-time case, $\Kcon \geq 1$ and can be arbitrary large.
If $A$ is normal and $\alpha(A) \leq 0$, then $\Kcon = 1$.

Despite the wealth of work done over decades towards making the 
upper bound of the Kreiss Matrix Theorem now tight, 
there has been no algorithm given to actually compute $\Kcon$ with guarantees.
In the literature, $\Kcon$ is often just approximated by 
plotting \eqref{eq:k1d_disc} or \eqref{eq:k1d_cont}; 
e.g., see \cite{EmbK17} and \cite[Chapter~3.4.1]{Men06}.

In this paper, we propose the first globally convergent algorithms to compute both 
continuous- and discrete-time Kreiss constants to arbitrary accuracy.
We assume that $A$ is nonnormal, 
as otherwise computing its Kreiss constant just involves checking if $A$ is unstable.
Furthermore, we assume that $\alpha(A) < 0$ or $\rho(A) < 1$ holds, respectively, 
in the continuous- or discrete-time case.  
With standard eigensolvers, the three different methods we propose all have $\bigO(\n^6)$
work complexities,\footnote{
We use the standard convention of treating dense computations 
of singular values, eigenvalues, solutions of Sylvester equations, etc., as \emph{atomic operations} 
with cubic costs in the dimensions of the associated matrices.
We additionally assume these costs become linear in the dimension of the matrices
when corresponding sparse methods are available.
}
but by also developing so-called \emph{divide-and-conquer} variants,
we show how this theoretical work complexity reduces
to  $\bigO(\n^4)$ on average and $\bigO(\n^5)$ in the worst case.
Our work also shows that locally optimal approximations to $\Kcon$
can be efficiently and reliably obtained for large-scale matrices via standard optimization
techniques.
Furthermore, we establish some variational properties of Kreiss constants, 
including that the Kreiss constant of a matrix $A$ is not a continuous function with respect to the entries 
of $A$.
Finally, as a side effect of our work, we also propose an important modification 
to the \emph{distance-to-uncontrollability} algorithms of \cite{Gu00,BurLO04,GuMOetal06} 
to greatly improve their reliability in practice.

The contributions of the paper are structured as follows.
In \S\ref{sec:dtu}, we introduce the so-called distance to uncontrollability and
present a theorem of Gu \cite[Theorem~3.1]{Gu00} and 
distance-to-uncontrollability algorithms that are based upon it.
In \S\ref{sec:varprop}, we establish variational properties of Kreiss constants that we will need
and show that there is a potentially exploitable similarity to computing the distance to uncontrollability.
Then, in \S\ref{sec:analogues}, we develop theorems for continuous-time $\Kcon$ that are 
analogues of the aforementioned theorem of Gu,
but nevertheless show that, due to key structural differences, 
existing distance-to-uncontrollability algorithms will not directly extend to Kreiss constants.
By developing a so-called \emph{globality certificate} in \S\ref{sec:cont},
we present our first algorithm for computing continuous-time Kreiss constants,
which is an optimization-with-restarts method \emph{using backtracking},
and then establish an asymptotically faster \emph{divide-and-conquer} variant that is inspired by \cite{GuMOetal06}.
We then modify the premise of our globality certificate in a crucial way in \S\ref{sec:alternatives}
to develop a second certificate with significant structural differences and properties;
this alternative certificate enables two other algorithms for continuous-time $\Kcon$, 
an optimization-with-restarts method \emph{without backtracking} and a trisection iteration,
which can be considered closer analogues of the distance-to-uncontrollability algorithms of \cite{BurLO04}.
Faster divide-and-conquer versions of these two methods are also developed.
In \S\ref{sec:disc}, we consider the case of discrete-time $\Kcon$ and
present discrete-time analogues of all of our continuous-time algorithms and associated theoretical results;
to the best of our knowledge, this is also the first extension of the 2D level-set ideas of \cite{Gu00}
to a discrete-time setting, and it turns out to have surprising differences.
Numerical examples are presented in \S\ref{sec:experiments}, with concluding
remarks made in \S\ref{sec:conclusion}.

For this paper, the following general definition
and two theorems will be needed.  The theorems can be found in several places in various forms, such as
\cite{Lan64,OveW95} 
and \cite[Theorem~13.16]{Lau05}, respectively.

\begin{definition}
Given a domain $\Omega \subseteq \R$, a function $f: \Omega \to \R$ 
has a \emph{global Lipschitz constant (GLC)} of $c \geq 0$
if $|f(x) - f(y) | \leq c |x - y|$ for all $x,y \in \Omega$.
\end{definition}

\begin{theorem}
\label{thm:eig2ndderiv}
For $x,y \in \R$, let $A(x,y)$ be a twice-differentiable $\n \times \n$ Hermitian matrix family,
and for a point $(\hat x,\hat y)$, 
let $\lambda_1 \geq \ldots \geq \lambda_\n$ be the eigenvalues of $A(\hat x,\hat y)$ with associated unit-norm eigenvectors $q_1,\ldots,q_\n$.  Then assuming $\lambda_j$ is unique,
\[
	\tfrac{\partial^2}{\partial x \partial y} \lambda_j \bigg|_{x=\hat x, y = \hat y} = 
	q_j^* \tfrac{\partial^2 A(\hat x,\hat y)}{\partial x \partial y}  q_j 
	+ 2 \sum_{k \ne j}
	\frac{ 
		q_j^* \tfrac{\partial A(\hat x, \hat y)}{\partial x} q_k  
		\cdot 
		q_j^* \tfrac{\partial A(\hat x, \hat y)}{\partial y} q_k
	}{
		\lambda_j - \lambda_k
	}.
\]
\end{theorem}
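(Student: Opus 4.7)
The plan is to prove this by standard first-order (Rayleigh--Schr\"odinger) perturbation theory, differentiating the eigenvalue equation $A(x,y)q_j(x,y) = \lambda_j(x,y)q_j(x,y)$ once to get the first partials of $\lambda_j$ and $q_j$, and then differentiating the first-derivative formula a second time. Simplicity of $\lambda_j$ at $(\hat x,\hat y)$ guarantees that $\lambda_j$ and a smoothly varying unit eigenvector $q_j$ exist in a neighborhood, so these computations are all well defined.

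First I would establish the classical first-order identity. Differentiating $Aq_j = \lambda_j q_j$ with respect to $x$, left-multiplying by $q_j^*$, and using $q_j^*q_j = 1$ (which implies $q_j^*\partial_x q_j$ is pure imaginary and hence drops out of the real eigenvalue derivative) gives $\partial_x \lambda_j = q_j^*(\partial_x A)q_j$, and analogously $\partial_y \lambda_j = q_j^*(\partial_y A)q_j$. Next, to derive the eigenvector derivative, I would expand $\partial_x q_j$ in the orthonormal basis $\{q_1,\ldots,q_\n\}$. Projecting the differentiated eigenvalue equation onto $q_k$ for $k\neq j$ and using $Aq_k=\lambda_k q_k$ yields the standard expression
\[
	\partial_x q_j \;=\; \sum_{k\neq j} \frac{q_k^*(\partial_x A)q_j}{\lambda_j - \lambda_k}\, q_k,
\]
which is valid because $\lambda_j\neq\lambda_k$ for every $k\neq j$ by the simplicity hypothesis, and where the $q_j$-component has been fixed to zero by the natural phase convention.

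Then I would differentiate the first-order formula $\partial_y \lambda_j = q_j^*(\partial_y A) q_j$ with respect to $x$, obtaining
\[
	\tfrac{\partial^2}{\partial x \partial y}\lambda_j
	\;=\; q_j^*\bigl(\tfrac{\partial^2 A}{\partial x \partial y}\bigr)q_j
	\;+\; (\partial_x q_j)^*(\partial_y A)q_j
	\;+\; q_j^*(\partial_y A)(\partial_x q_j).
\]
Substituting the eigenvector derivative expression into the last two terms produces a single sum over $k\neq j$ with denominators $\lambda_j - \lambda_k$. The Hermitian property of $\partial_y A$ and of $\partial_x A$ then lets me combine the two contributions into the symmetric form stated in the theorem, with the overall factor of $2$ arising because the $(\partial_x q_j)^*(\partial_y A)q_j$ and $q_j^*(\partial_y A)(\partial_x q_j)$ contributions are complex conjugates of each other.

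The main obstacle, in my view, is not conceptual but bookkeeping: reconciling the conjugates produced by the $(\partial_x q_j)^*$ term with the unconjugated form written in the statement. The cleanest route is to exploit Hermiticity to rewrite $q_k^*(\partial_x A) q_j = \overline{q_j^*(\partial_x A) q_k}$, pair this with the analogous identity for $\partial_y A$, and observe that the two summands in the sum $(\partial_x q_j)^*(\partial_y A)q_j + q_j^*(\partial_y A)(\partial_x q_j)$ combine into twice the real part of a single product, yielding the form in the theorem. The formula then reduces, modulo these routine manipulations, to the classical Hellmann--Feynman/Rayleigh--Schr\"odinger second-order expansion as found in, e.g., \cite{Lan64,OveW95}.
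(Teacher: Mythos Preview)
The paper does not actually prove \cref{thm:eig2ndderiv}; it is stated as a known result with citations to \cite{Lan64,OveW95}. Your Rayleigh--Schr\"odinger perturbation argument is precisely the standard derivation one finds in those references, so there is nothing to compare against and your approach is the expected one.

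One small point worth flagging: your computation correctly produces
\[
	(\partial_x q_j)^*(\partial_y A)q_j + q_j^*(\partial_y A)(\partial_x q_j)
	= \sum_{k\neq j}\frac{2\,\Re\!\big(\,q_j^*(\partial_x A)q_k\cdot \overline{q_j^*(\partial_y A)q_k}\,\big)}{\lambda_j-\lambda_k},
\]
which in general is $2\Re(a_{xk}\overline{a_{yk}})$ rather than $2\,a_{xk}a_{yk}$ as literally written in the statement. The two agree when $A(x,y)$ is real symmetric (the setting most references implicitly assume), and in the complex Hermitian case the displayed sum should be read as its real part; this is a well-known ambiguity in how the formula is quoted across the literature, not a flaw in your argument.
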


\begin{theorem}
\label{thm:kronsum}
Let $A \in \C^{n \times n}$ with $\lambda_j \in \Lambda(A)$ for $j = 1,\ldots,n$ and 
\mbox{$B \in \C^{m \times m}$} with $\mu_k \in \Lambda(B)$ for $k = 1,\ldots,m$.
Then the \emph{Kronecker sum} $A \oplus B = I_m \otimes A + B \otimes I_n$
has eigenvalues $\lambda_j + \mu_k$, for all pairs of $j$ and $k$.
\end{theorem}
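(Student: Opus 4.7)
The plan is to reduce $A$ and $B$ to upper triangular form via Schur decompositions and then use the mixed-product rule $(P\otimes Q)(R\otimes S)=(PR)\otimes(QS)$ to exhibit $A\oplus B$ as unitarily similar to an upper triangular matrix whose diagonal entries are exactly the desired sums $\lambda_j+\mu_k$.

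Concretely, I would first write $A=UT_AU^*$ and $B=VT_BV^*$ with $U,V$ unitary and $T_A,T_B$ upper triangular, so that the diagonal entries of $T_A$ and $T_B$ are respectively the $\lambda_j$ and the $\mu_k$. Setting $W=V\otimes U$, which is unitary because the Kronecker product of unitaries is unitary, the mixed-product rule yields
\[
W(I_m\otimes T_A)W^*=(VI_mV^*)\otimes(UT_AU^*)=I_m\otimes A,
\]
and analogously $W(T_B\otimes I_n)W^*=B\otimes I_n$. Summing these identities gives
\[
W\bigl(I_m\otimes T_A+T_B\otimes I_n\bigr)W^*=A\oplus B,
\]
so $A\oplus B$ is unitarily similar to $M:=I_m\otimes T_A+T_B\otimes I_n$. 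Since $T_A$ and $T_B$ are upper triangular, both $I_m\otimes T_A$ (block diagonal with $T_A$ on each block) and $T_B\otimes I_n$ (block upper triangular with diagonal blocks $\mu_k I_n$) are upper triangular under the natural block ordering, and hence so is $M$; its diagonal is obtained by adding the two corresponding diagonals, producing $\lambda_j+\mu_k$ for every pair $(j,k)$. Because the eigenvalues of a triangular matrix are its diagonal entries and unitary similarity preserves spectra, the conclusion $\Lambda(A\oplus B)=\{\lambda_j+\mu_k:1\le j\le n,\,1\le k\le m\}$ (with multiplicities) follows immediately.

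The argument has no genuine obstacle; the only point requiring mild care is bookkeeping, namely verifying that the block-ordering conventions implicit in the two Kronecker summands agree so that the diagonal of $M$ enumerates every pair $(j,k)$ exactly once with the correct multiplicity. Everything else reduces to the mixed-product rule and the triangular structure of the Schur form.
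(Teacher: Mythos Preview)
Your proof is correct and complete. Note, however, that the paper does not actually supply its own proof of this theorem: it is stated as a known preliminary result and attributed to the literature (specifically \cite[Theorem~13.16]{Lau05}). Your Schur-decomposition argument via the mixed-product rule is the standard way to establish this fact, and the only delicate step---the bookkeeping that the diagonal of $I_m\otimes T_A + T_B\otimes I_n$ enumerates every pair $(\lambda_j,\mu_k)$ exactly once---you have handled correctly.
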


\section{Computing the distance to uncontrollability}
\label{sec:dtu}
Given $A \in \C^{\n \times \n}$ and $B \in \C^{\n \times m}$, consider
the linear control system
\beq
	\label{eq:xAB}
	\dot x = Ax + Bu,
\eeq
where the state $x \in \C^\n$ and $u \in \C^m$, the \emph{control input}, are both dependent
on time.
The system \eqref{eq:xAB} is \emph{controllable} if given respective initial and final states $x(0)$ and $x(T)$ with $T > 0$,
there exists a control $u(\cdot)$ that realizes some trajectory $x(\cdot)$ with endpoints  $x(0)$ and $x(T)$.  The \emph{distance to uncontrollability}, which we denote as $\dtu$, can be computed 
via solving the nonconvex optimization problem \cite{Eis84}
\beq
	\label{eq:dtu}
	\dtu = \min_{z \in \C} \smin\left(\begin{bmatrix} A - zI & B \end{bmatrix}\right)
	= \min_{x,y \in \R} f(x,y),
\eeq
where $f(x,y) = \smin(F(x,y))$ and 
$F(x,y) = \begin{bmatrix} A - (x + \imagunit y)I & B \end{bmatrix}$.
The first practical algorithm to address computing $\dtu$ is due to Gu \cite{Gu00},
based on the following result \cite[Theorem~3.1]{Gu00}.

\begin{theorem}
\label{thm:dtu}
Let $\gamma, \eta \geq 0$ be given.
If $\dtu \leq \gamma$ and $\eta \in [0,2(\gamma - \dtu)]$, then 
there exists a pair $x,y \in \R$ such that
\beq
	\label{eq:dtu_ub}
	f(x,y) = f(x+\eta,y) = \gamma.
\eeq
\end{theorem}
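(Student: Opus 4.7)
The plan is to reduce the statement to a planar topological fact about a level set and its horizontal translate. Write $\tau = \dtu$ and let $(x^*, y^*)$ be a global minimizer of $f$, so $f(x^*, y^*) = \tau \leq \gamma$. When $\eta = 0$ the conclusion is immediate: $f$ is continuous and coercive (it tends to infinity as $|(x,y)| \to \infty$), so it attains every value in $[\tau, \infty)$, including $\gamma$. Hence I focus on the case $\eta > 0$, which forces $\tau < \gamma$.

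First I would establish a horizontal Lipschitz bound for $f$. Because $F(x,y)$ depends on $x$ only through the shift $-x I$ in its first block, Weyl's inequality for singular values yields $|f(x,y) - f(x',y)| \leq |x - x'|$, so $f$ has a horizontal GLC of $1$. Applied to $g(x) \mathrel{:=} f(x, y^*)$, this forces the connected component $[L, R]$ of $\{x : g(x) \leq \gamma\}$ containing $x^*$ to be a bounded closed interval satisfying $g(L) = g(R) = \gamma$, $L \leq x^* - (\gamma - \tau)$, $R \geq x^* + (\gamma - \tau)$, and therefore $R - L \geq 2(\gamma - \tau) \geq \eta$.

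Next I would set up the topological argument in $\R^2$. Let $C$ be the connected component of $\{(x,y) \in \R^2 : f(x,y) \leq \gamma\}$ containing $(x^*, y^*)$; coercivity of $f$ makes $C$ compact, and $C$ contains the segment $[L, R] \times \{y^*\}$. Let $\Gamma$ denote the outer boundary of $C$, i.e., the boundary of the unbounded component of $\R^2 \setminus C$. In the generic case where $\gamma$ is a regular value of $f$, $\Gamma$ is a Jordan curve enclosing a compact region $\widetilde C \supseteq C$. Its horizontal translate $\Gamma' = \Gamma - (\eta, 0)$ encloses $\widetilde C' = \widetilde C - (\eta, 0)$. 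Because $L + \eta \leq R$ places $(L+\eta, y^*) \in C \subseteq \widetilde C$, the point $(L, y^*)$ lies in $\widetilde C \cap \widetilde C'$, while the boundedness of $\widetilde C$ together with $\eta > 0$ ensures that neither region is contained in the other.

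The proof then concludes via the elementary Jordan-curve fact that two bounding curves whose enclosed compact regions overlap without mutual containment must intersect: any $(x, y) \in \Gamma \cap \Gamma'$ satisfies $f(x, y) = \gamma$ from $(x, y) \in \partial C$, and $f(x + \eta, y) = \gamma$ from $(x + \eta, y) \in \partial C$. The main obstacle I anticipate is the non-generic case in which $\gamma$ is a critical value of $f$ so that $\Gamma$ may fail to be a 1-manifold; I would handle it by choosing regular values $\gamma_n \downarrow \gamma$ (for which the hypothesis persists since $\eta \leq 2(\gamma - \tau) \leq 2(\gamma_n - \tau)$), solving the problem at each $\gamma_n$, and extracting a convergent subsequence of the resulting pairs $(x_n, y_n)$ using coercivity of $f$ and continuity.
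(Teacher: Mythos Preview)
Your proof is correct and shares the overall strategy of Gu's original argument (which the paper cites and then adapts in its proof of \cref{thm:kinv}): use the global Lipschitz constant of $f$ to show the $\gamma$-level region around the global minimizer has width at least $2(\gamma-\tau)$ in the horizontal direction, then argue that a suitable closed curve and its translate by $\eta$ must intersect. The one substantive technical difference is the choice of curve. You take the outer boundary of the connected component of the sublevel set $\{f\le\gamma\}$, which obliges you to treat the possibility that $\gamma$ is a critical value of $f$ via a Sard-type approximation and a limiting argument. Gu instead works with the algebraic variety $\{(x,y):\det(F(x,y)F(x,y)^*-\gamma^2 I)=0\}$, the locus where $\gamma$ is \emph{some} singular value of $F(x,y)$ rather than necessarily the smallest; this is automatically a finite union of closed real algebraic curves, so no regularity or limiting step is needed, and the intersection is established directly by exhibiting one point of the shifted curve in the interior and another in the exterior of the original. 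Your route is slightly more general in that it does not exploit the polynomial dependence of $F$ on $(x,y)$, while Gu's leverages exactly that structure to keep the argument self-contained and avoid the approximation machinery.
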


\begin{corollary}
\label{cor:dtu}
Let $\gamma, \eta \geq 0$ be given.
If there do not exist any pairs $x,y \in \R$ such that \eqref{eq:dtu_ub} holds,
then 
\beq
	\label{eq:dtu_lb}
	\dtu > \gamma - \tfrac{\eta}{2}.
\eeq
\end{corollary}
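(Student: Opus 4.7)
The plan is a one-line contrapositive of \Cref{thm:dtu}. I would assume for contradiction (or simply by contraposition) that $\dtu \leq \gamma - \tfrac{\eta}{2}$ and deduce that a pair $(x,y)$ satisfying \eqref{eq:dtu_ub} must exist, contradicting the hypothesis of the corollary.

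The key step is to verify that both hypotheses of \Cref{thm:dtu} hold under this assumption. Since $\eta \geq 0$, the inequality $\dtu \leq \gamma - \tfrac{\eta}{2} \leq \gamma$ gives the first hypothesis. For the second, rearranging $\dtu \leq \gamma - \tfrac{\eta}{2}$ yields $\eta \leq 2(\gamma - \dtu)$, and combined with $\eta \geq 0$ this shows $\eta \in [0, 2(\gamma - \dtu)]$. \Cref{thm:dtu} then produces a pair $x,y \in \R$ with $f(x,y) = f(x+\eta,y) = \gamma$, which is the desired contradiction.

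There is no real obstacle here; the content of the corollary is entirely carried by \Cref{thm:dtu}, and the corollary merely repackages it in the form that is actually used by the algorithm (namely, a numerically verifiable certificate: if a level-set search at height $\gamma$ finds no pair of points separated horizontally by exactly $\eta$, then one obtains the explicit lower bound $\gamma - \tfrac{\eta}{2}$ on $\dtu$). The only thing to be careful about is handling the boundary case $\eta = 0$, but then the conclusion $\dtu > \gamma$ follows from the fact that $f(x,y) = \gamma$ for some $(x,y)$ whenever $\dtu \leq \gamma$ (by continuity of $f$ and the definition of $\dtu$ as its infimum), so the contrapositive still applies cleanly.
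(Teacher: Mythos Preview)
Your proposal is correct and matches the paper's approach: the corollary is stated without proof precisely because it is the immediate contrapositive of \Cref{thm:dtu}, exactly as you argue. Your separate treatment of the $\eta=0$ case is unnecessary, since $\eta=0$ already lies in the interval $[0,2(\gamma-\dtu)]$ whenever $\dtu\le\gamma$, so the main contrapositive covers it directly.
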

The proof of \cref{thm:dtu} relies on the fact that $f(x,y)$ 
has a GLC of 1 with respect to either $x$ or $y$.

What \cite[Section~3.2]{Gu00} additionally devised was a sequence of computations 
to verify whether either \eqref{eq:dtu_ub} or \eqref{eq:dtu_lb} holds for a given choice of $\gamma$ and $\eta$.
This verification procedure, \emph{when using exact arithmetic}, is able to detect and find any points 
$(x,y)$ such that \eqref{eq:dtu_ub} is satisfied.
If so, the test returns these points
and $\dtu \leq \gamma$ is verified.  Otherwise,
the test asserts no pairs satisfy \eqref{eq:dtu_ub} and so \eqref{eq:dtu_lb} instead must hold.
Using this procedure, Gu proposed a bisection-like scheme to estimate $\dtu$ to \emph{within a factor of two}.
For initialization, $\gamma \coloneqq f(0,0)$ and $\eta \coloneqq \gamma$.
If the test verifies $\dtu \leq \gamma$, then 
$\gamma$ and $\eta$ are both halved (so $\eta = \gamma$ still holds) and the test is done with these smaller values.
Otherwise, \eqref{eq:dtu_lb} holds and so 
$\gamma$ and $\dtu$ are within a factor of two and Gu's method terminates.

As noted in \cite[p.~358]{BurLO04}, it is tempting to try to obtain $\dtu$ to higher precision via a true bisection method, i.e., one that would update both upper and lower bounds, unlike Gu's method which only 
updates an upper bound. 
The problem with this approach is that in order to ascertain whether the current estimate $\gamma$ is
essentially a lower bound to $\dtu$ via \eqref{eq:dtu_lb}, one would have to perform the verification
procedure for $\eta \approx 0$.  
Unfortunately, this is not tenable in the presence of rounding errors, as Gu's procedure becomes more and more 
numerically unreliable as $\eta \to 0$, i.e.,
points satisfying \eqref{eq:dtu_ub} may not be detected.
Consequently, in practice, the lower bound will generally be erroneously updated at some point,
thus preventing convergence to $\dtu$.\footnote{In \cref{rem:improved_test}, 
we discuss the unreliability of Gu's procedure in more detail and
explain how our new tests avoid key numerical pitfalls.
Besides being useful for our $\Kcon$ algorithms, our modifications can also
improve the reliability of the $\dtu$ methods of \cite{Gu00,BurLO04,GuMOetal06}.}

Using Gu's verification procedure (as Gu specified it, i.e., without modifications), 
Burke, Lewis, and Overton instead proposed a trisection algorithm \cite[Algorithm~5.2]{BurLO04} 
that balances how much the lower bound is updated with how quickly the value of $\eta$ is decreased,
precisely to postpone the numerical unreliability of Gu's procedure as long as possible.
Trisection works as follows.
Let $L\coloneqq0$ and $U \coloneqq f(0,0)$ be initial lower and upper bounds, respectively.
Then on the $k$th iteration, $\eta_k \coloneqq \tfrac{2}{3}(U - L)$ and $\gamma_k \coloneqq L + \eta_k$
are set as the current values of $\eta$ and $\gamma$ for the verification test.  
If the test finds points satisfying \eqref{eq:dtu_ub},
then the upper bound is updated $U \coloneqq \gamma_k$.  Otherwise, \eqref{eq:dtu_lb} holds
so we know that $\dtu \geq \gamma_k - \tfrac{\eta_k}{2} = L + \tfrac{\eta_k}{2}$,
and so now the lower bound can be updated $L \coloneqq L + \tfrac{\eta_k}{2}$.
Thus, the new interval has length $\tfrac{2}{3}(U - L)$.
As the trisection algorithm is linearly convergent, 
Gu's verification procedure will have to be invoked many times, 
which already is $\bigO(\n^6)$ work by itself using standard dense eigensolvers.  
Thus, the trisection algorithm also has a large constant factor term hidden away in its asymptotic work complexity.  
Also, trisection is not a panacea for the numerical issues of Gu's procedure; 
although $\eta_k \to 0$ only in the limit, if $\dtu$ is small, then $\eta_k$
must become commensurately small in order for trisection to attain any digits of accuracy;
see \cref{lem:tri_eta} and \cref{cor:rel_acc} in \cref{apdx:lemmas}.

In the same paper, Burke, Lewis, and Overton also proposed a second algorithm for $\dtu$
\cite[Algorithm~5.3]{BurLO04} and advocated it as preferable to trisection.
This \emph{optimization-with-restarts} method also relies on Gu's verification procedure,
now as a \emph{globality certificate}, and additionally, 
on the fact that $f(x,y)$ is semialgebraic and so $f(x,y)$ only has a finite number of 
locally minimal values; see \cite[p.~359]{BurLO04}.
This second method thus works by using optimization techniques 
to find a minimizer of \eqref{eq:dtu} with function value $f_k$ and then uses Gu's verification procedure with 
carefully chosen values of $\gamma$ and $\eta$ so that the test checks if $f_k$ is 
sufficiently close to $\dtu$; for some relative tolerance $\texttt{tol} > 0$, the specific values are
$\gamma \coloneqq f_k \cdot (1 - 0.5\cdot \texttt{tol})$ and $\eta \coloneqq f_k \cdot \texttt{tol}$.\footnote{
Note that \cite{BurLO04} writes these in an equivalent but different form using  
$\delta_1 = \gamma$ and $\delta_2 = \gamma - \tfrac{\eta}{2}$.}
Otherwise, if $f_k \not\approx \dtu$ to tolerance, the certificate provides one or more new starting points from which optimization can be restarted with the guarantee that a better (lower) minimum of \eqref{eq:dtu} will be found, hence optimization is restarted in a loop until the certificate indeed asserts that the desired accuracy has been attained.
By construction, $f_k$ is monotonically decreasing and optimization-with-restarts must terminate with $f_k \approx \dtu$ to tolerance in a finite number of restarts.
Although it is not clear exactly how many restarts will occur, 
only a handful are typically needed in practice, if any.
Furthermore, the optimization phases are relatively cheap, requiring $\bigO(\n^3)$ work with a relatively low constant factor, since minimizers of $f(x,y)$ can generally be found with superlinear or even quadratic convergence.
As a result, optimization-with-restarts is almost always many times faster than trisection.
However, optimization-with-restarts can still be susceptible to 
the numerical difficulties of Gu's verification procedure, 
since $\eta$ itself may still become very small, e.g., 
if either high accuracy is desired or $\dtu$ is small.

Finally, to address the high cost of Gu's verification procedure, though not necessarily its numerical issues,
\cite{GuMOetal06} proposed a divide-and-conquer strategy that lowers the asymptotic work complexity
of Gu's procedure to $\bigO(\n^4)$ on average and $\bigO(\n^5)$ in the worst case.
This benefits all of the aforementioned algorithms.

\section{Variational properties and the inverse of the Kreiss constant}
\label{sec:varprop}
We now establish some variational properties of Kreiss constants, which in turn show that locally optimal approximations to $\Kcon$ can be efficiently computed via optimization, even if $A$ is large.
We also show how the problem of computing $\Kcon$ shares some similarity with computing $\dtu$. 
We begin with the following result.

\begin{lemma}
The Kreiss constant $\mathcal{K}$ is not always continuous at $A$, as it may instantaneously jump to/from $\infty$.
However, $\mathcal{K}$ is continuous at $A$ 
if $\alpha(A) < 0$ holds (continuous-time case)
or $\rho(A) < 1$ holds (discrete-time case).
\end{lemma}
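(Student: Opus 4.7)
The plan is to handle the two halves separately: an explicit example demonstrates the jump, and then a sup-of-continuous-functions argument gives continuity whenever $\alpha(A) < 0$ (or $\rho(A) < 1$).

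For the discontinuity, I would exhibit an $A$ at which $\Kcal(\cdot)$ jumps to or from $\infty$ under arbitrarily small perturbations. In the continuous-time setting, the simplest choice is $A = 0_{\n \times \n}$, which is normal with $\alpha(A) = 0$ and hence $\Kcon = 1$, whereas for any $\eps > 0$ the perturbation $A_\eps \coloneqq \eps I$ has $\alpha(A_\eps) = \eps > 0$, so $(zI - A_\eps)^{-1}$ has a pole in the open right half plane and $\Kcal(A_\eps) = \infty$. Letting $\eps \to 0^+$ exhibits the jump from $\infty$ down to $1$, and perturbing away from $A = 0$ exhibits the jump upward. For the discrete-time analogue, I would use $A = I$ with $A_\eps \coloneqq (1+\eps) I$.

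For the continuity claim in the continuous-time case, fix $A$ with $\alpha(A) < 0$, let $A_k \to A$, and work with $\Kcon = \sup\{h(z,A) : \Re z > 0\}$, where $h(z,B) \coloneqq (\Re z)\,\|(zI-B)^{-1}\|$ is jointly continuous on $\{(z,B) : z \notin \Lambda(B),\ \Re z > 0\}$. Lower semi-continuity, $\liminf_k \Kcal(A_k) \geq \Kcon$, is immediate: for each fixed $z$ with $\Re z > 0$ one has $z \notin \Lambda(A_k)$ eventually (since $\alpha(A_k) \to \alpha(A) < 0$), and joint continuity yields $h(z,A_k) \to h(z,A)$, whence taking a sup in $z$ gives the bound. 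Upper semi-continuity needs more care: pick near-maximizers $z_k$ with $h(z_k, A_k) \geq \Kcal(A_k) - 1/k$ and analyze three possibilities along arbitrary subsequences: (a) $\{z_k\}$ has a limit point $z_*$ with $\Re z_* > 0$, where joint continuity gives $h(z_k, A_k) \to h(z_*, A) \leq \Kcon$; (b) $\Re z_k \to 0$ along a bounded subsequence, where the limit lies on the imaginary axis, hence outside $\Lambda(A)$, so $\|(z_k I - A_k)^{-1}\|$ is uniformly bounded near the limit and $h(z_k, A_k) \to 0$; and (c) $|z_k| \to \infty$, where the Neumann expansion $(zI - B)^{-1} = z^{-1}(I + z^{-1}B + \cdots)$ gives $\|(z_k I - A_k)^{-1}\| = |z_k|^{-1}(1+o(1))$ uniformly in $k$, so $h(z_k,A_k) \leq (\Re z_k / |z_k|)(1+o(1)) \leq 1 + o(1) \leq \Kcon + o(1)$ using $\Kcon \geq 1$. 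Combining, $\limsup_k \Kcal(A_k) \leq \Kcon$. The discrete-time case proceeds identically after replacing $\Re z$ with $|z|-1$ and the imaginary axis with the unit circle, with $\rho(A) < 1$ playing the role of $\alpha(A) < 0$.

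The main technical point I anticipate is the uniform resolvent bound in case (b): it requires a small joint-compactness argument showing that on a set of the form $\{z : \Re z \in [0,\delta],\ |\Im z| \leq M\} \times \{B : \|B - A\| \leq r\}$, with $\delta, r$ chosen small enough that the first factor avoids $\Lambda(A)$, joint continuity on a compact set yields a resolvent bound valid for all sufficiently large $k$. Everything else reduces to continuity of eigenvalues (to guarantee $\Kcal(A_k) < \infty$ eventually) and joint continuity of the resolvent on its natural domain.
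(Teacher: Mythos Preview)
Your proposal is correct and follows the same overall strategy as the paper: an explicit normal matrix sitting on the stability boundary for the discontinuity example, and continuity of the resolvent norm for the positive result. Your example $A=0$ (resp.\ $A=I$) is a simplification of the paper's $2\times 2$ diagonal example, but the mechanism is identical.

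Where you genuinely differ is in rigor. The paper's continuity argument is a single sentence: ``By continuity of singular values, $\|(zI-A)^{-1}\|$ \ldots\ is continuous at $A$, and thus so is $\Kcon$.'' Taken literally this is a gap, since a supremum of continuous functions is only lower semicontinuous in general; the paper tacitly relies on some uniformity that it does not spell out. Your three-case subsequence analysis (interior limit, boundary limit with bounded resolvent, escape to infinity via the Neumann expansion) is exactly the missing uniformity argument that turns pointwise continuity of $h(z,B)$ into continuity of the supremum. So your approach buys a complete proof at the cost of more work, while the paper's approach buys brevity at the cost of leaving the key compactness step implicit.
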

\begin{proof}
We begin with the second part.  If $\alpha(A) < 0$, then $zI - A$ is invertible 
for all $z$ such that $\Re z > 0$.
By continuity of singular values, 
$\|(zI - A)^{-1}\|$ in \eqref{eq:k2d_cont} is continuous at $A$,
and thus so is $\Kcon$.
Via an analogous argument with \eqref{eq:k2d_disc}, 
the continuity claim also holds for the discrete-time case.

We prove the first part by example.  
Let $A(\delta) \coloneqq \begin{bsmallmatrix} -0.5 & 0 \\ 0 & \delta \end{bsmallmatrix}$ for real scalar $\delta \geq 0$.
As $A(\delta)$ is always normal and $\alpha(A(0)) = 0$, $\mathcal{K}(A(0)) = 1$ holds.
However, as $\alpha(A(\delta)) > 0$ for any $\delta > 0$, $\mathcal{K}(A(\delta)) = \infty$ for any $\delta > 0$.
Using the same example with $\delta \geq 1$ for the discrete-time case, we have that $\mathcal{K}(A(1)) = 1$
and $\mathcal{K}(A(\delta)) = \infty$ for all $\delta > 1$.
\end{proof}

\subsection{The continuous-time case}
\label{sec:opt_cont}
Identifying $\C$ with $\R^2$, consider the inverse of the continuous-time Kreiss constant \eqref{eq:k2d_cont}, i.e.,
\beq	
	\label{eq:kinv_cont}
	\Kinv = \inf_{x > 0, y \in \R} 
	\smin \left(\frac{(x+\imagunit y) I - A}{x} \right)
	= \inf_{x > 0, y \in \R} g(x,y),
\eeq
where
\beq
	\label{eq:g_cont}
	g(x,y) = \smin(G(x,y)) \quad \text{and} \quad G(x,y) = \frac{(x+\imagunit y) I - A}{x}.
\eeq
Like $f(x,y)$, $g(x,y)$ is semialgebraic, 
and so in the open right half-plane, $g(x,y)$ must have only a finite number of locally minimal function values.
We now derive the gradient and Hessian of $g(x,y)$ which will be useful for 
finding minimizers via quasi-Newton or Newton methods.
Although singular values can vary nonsmoothly with respect to matrix entries,
they are nevertheless locally Lipschitz, and so this nonsmoothness is confined 
to a set of measure zero.
We first need the first partial derivatives of $G(x,y)$ for $x \ne 0$:
\beq
	\label{eq:first_partials_cont}
	\frac{\partial G(x,y)}{\partial x}  = \frac{xI - ((x + \imagunit y) I - A)}{x^2} = \frac{A - \imagunit y I }{x^2}
	\quad \text{and} \quad
	\frac{\partial G(x,y)}{\partial y} = \frac{\imagunit I}{x}.
\eeq
Let $(\hat x,\hat y)$ be such that $g(\hat x,\hat y) \neq 0$ is a simple singular value of $G(x,y)$ 
with associated left and right singular vectors $u$ and $v$.
Then, by standard perturbation theory for singular values, it follows that
\beq
	\nabla g(\hat x,\hat y) 
	= \Re
	\begin{bmatrix}
	u^* \tfrac{\partial G(x,y)}{\partial x} v \\[6pt]
	u^* \tfrac{\partial G(x,y)}{\partial y} v
	\end{bmatrix}.
\eeq
Now since $g(x,y) = \smin(G(x,y))$ is also the $\n$th eigenvalue (in descending order) of the $2\n \times 2\n$ Hermitian matrix 
\beq
	\label{eq:hess_mat} 
	\begin{bmatrix} 0 & G(x,y) \\ G(x,y)^* & 0 \end{bmatrix},
\eeq
$\nabla^2 g(\hat x, \hat y)$ can be computed by applying \cref{thm:eig2ndderiv} to \eqref{eq:hess_mat}.
Computationally, the necessary first and second partial derivatives of \eqref{eq:hess_mat} can be obtained via the first partials given in \eqref{eq:first_partials_cont} and the following second 
partial derivatives:
\beq
	\label{eq:second_partials_cont}
	\frac{\partial^2 G(x,y)}{\partial x^2} = \frac{-2(A - \imagunit y I) }{x^3},
	\quad 
	\frac{\partial^2 G(x,y)}{\partial y^2} = 0,
	\quad \text{and} \quad 
	\frac{\partial^2 G(x,y)}{\partial x \partial y} = \frac{-\imagunit I}{x^2}.
\eeq
Although the full eigendecomposition of \eqref{eq:hess_mat} is needed,
it can actually be constructed more or less for free given the full SVD of $G(\hat x, \hat y)$; 
see \cite[Section~2.2]{BenM19} for details.
If $(\hat x, \hat y)$ is additionally a distinct (up to conjugacy) \emph{global} minimizer of $g(x,y)$, 
then the gradient and Hessian of $\Kinv$ with respect to $A$ is equivalent to the gradient and Hessian of $g(\hat x, \hat y)$
with respect to $A$. 
The gradient and Hessian of $\Kcon$ is then simply obtained by applying the
chain rule for the inverse.

The cost of obtaining $g(\hat x,\hat y)$, its gradient, and its Hessian is
$\bigO(\n^3)$, as they can all be computed given the full SVD of $G(\hat x, \hat y)$.
Although \eqref{eq:kinv_cont} is technically a constrained optimization problem, 
$g(x,y) \to \infty$ as $x$ approaches zero from the right, assuming that $\imagunit y$ 
is not an eigenvalue of $A$.
Thus, just returning $\infty$ as the value of $g(x,y)$ whenever $x \leq 0$
suffices for using unconstrained optimization solvers to find feasible local/global minimizers of \eqref{eq:kinv_cont}.
Provided $g(x,y)$ is sufficiently smooth about its stationary points,
one can expect local quadratic convergence when using a Newton-based optimization method and superlinear convergence with a quasi-Newton method 
(forgoing the use of the Hessian).
Note that scalable methods for computing smallest singular values, e.g., PROPACK \cite{propack},
can also be used to compute $g(\hat x,\hat y)$ and its associated pair of left and right singular vectors in order to obtain $\nabla g(\hat x, \hat y)$.
Thus, by combining such a sparse solver with a quasi-Newton method, one can 
efficiently obtain locally optimal approximations to Kreiss constants of large-scale matrices.

\begin{remark}
One could also consider using optimization to find maximizers of \eqref{eq:k1d_cont},
which has the benefit of working with only one optimization variable instead of two.
However, computing $\aleps(A)$ 
is substantially more expensive than the minimum singular value of a matrix; 
the quadratically-convergent criss-cross algorithm of \cite{BurLO03}
to compute $\aleps(A)$, as well as the faster method of \cite{BenM19}, 
require computing all eigenvalues of $2\n \times 2\n$ matrices, often several times.
Moreover, we have just shown how the explicit Hessian of $g(x,y)$ can easily be computed 
in order to obtain faster convergence of the iterates produced by optimization methods.
Finally, for large-scale $A$ matrices, sparse methods for $\smin(A)$
are generally much faster and more reliable than 
those for approximating $\aleps(A)$ \cite{GugO11,KreV14}.
\end{remark}

That $g(x,y)$ is not so dissimilar to $f(x,y)$ for $\dtu$ indicates that
it might be possible to adapt Gu's verification procedure to develop globality certificates for $g(x,y)$.
Combined with the optimization techniques discussed here, this would enable
a globally convergent optimization-with-restarts method for Kreiss constants
that terminates within a finite number of restarts.

\subsection{The discrete-time case}
\label{sec:opt_disc}
Again identifying $\C$ with $\R^2$, but now using polar coordinates, 
consider the inverse of the discrete-time Kreiss constant \eqref{eq:k2d_disc}, i.e.,
\beq	
	\label{eq:kinv_disc}
	\Kinv = \inf_{r > 1, \theta \in [0,2\pi)} 
	\smin \left(\frac{r\eit I - A}{r - 1} \right)
 	=  \inf_{r > 1, \theta \in [0,2\pi)} h(r,\theta),
\eeq
where
\beq
	\label{eq:h_disc}
	h(r,\theta) = \smin(H(r,\theta)) \quad \text{and} \quad H(r,\theta) = \frac{r\eit I - A}{ r - 1}.
\eeq
Naturally $h(r,\theta)$ has the same key properties as $g(x,y)$, i.e., it too is semialgebraic and locally Lipschitz.
Thus, $h(r,\theta)$ has a finite number of locally minimal function values,
and we can consider using optimization to find minimizers of $h(r,\theta)$.
We will need the analogous gradients and Hessian of $h(r,\theta)$;
for brevity, we just provide the first and second partial derivatives of $H(r,\theta)$ for $r \ne 1$ here,
which are respectively
\beq
	\label{eq:first_partials_disc}
	\frac{\partial H(r,\theta)}{\partial r}  
	= \frac{(r-1)\eit I - (r\eit I - A)}{(r-1)^2} 
	= \frac{A - \eit I }{(r - 1)^2}
	\quad \text{and} \quad
	\frac{\partial H(r,\theta)}{\partial \theta} = \frac{\imagunit r\eit I}{r - 1},
\eeq
and
\beq
	\label{eq:second_partials_disc}
	\frac{\partial^2 H(r,\theta)}{\partial r^2} = \frac{-2(A - \eit I) }{(r-1)^3},
	\ 
	\frac{\partial^2 H(r,\theta)}{\partial \theta^2} = \frac{-r\eit I}{r-1},
	 \  \text{and} \ 
	\frac{\partial^2 H(r,\theta)}{\partial r \partial \theta} = \frac{-\imagunit \eit I}{(r-1)^2}.
\eeq
The costs to compute $h(r,\theta)$ along with its gradient and Hessian also remain as are described in \S\ref{sec:opt_cont} for $g(x,y)$, and the variational results above similarly allow the gradient and Hessian 
of $\Kinv$ or $\Kcon$ to be computed in the discrete-time case.
To ensure that optimization returns a feasible minimizer to \eqref{eq:kinv_disc}, 
it suffices to return $\infty$ for the value of $h(r,\theta)$ whenever $r \leq 1$;
this is because for $\eit$ not an eigenvalue of $A$, $\lim_{r \to 1^+} h(r,\theta) \to \infty$.
Thus, locally optimal approximations to discrete-time Kreiss constants can be computed efficiently,
for small- or large-scale matrices.
To develop a globally convergent algorithm, we will need to develop a discrete-time globality certificate.

\section{Continuous-time Kreiss constant analogues of Gu's theorem and their consequences}
\label{sec:analogues}
Before we present our globally convergent iterations for computing Kreiss constants,
we first develop analogues of \cref{thm:dtu,cor:dtu}.
For the time being, we consider continuous-time $\Kcon$ and begin by considering 
vertically oriented pairs of points on the $\gamma$-level set of $g(x,y)$.
For $\dtu$, Gu considered pairs of level-set points oriented horizontally, but this choice was rather arbitrary. 
However, as we will soon see, for Kreiss constants the choice of orientation does have important consequences,
both theoretically and for our new algorithms.

\begin{theorem}
\label{thm:kinv_cv}
For $A \in \C^{\n \times \n}$ with $\alpha(A) < 0$,
let $\gamma \in [0,1)$,
$\eta \geq 0$, and $(x_\star,y_\star)$ be a global minimizer of \eqref{eq:kinv_cont}.
If $\Kinv \leq \gamma$ and $\eta \in [0,2x_\star(\gamma - \Kinv)]$, then 
there exists a pair $x,y \in \R$ with $x > 0$ such that
\beq
	\label{eq:kinv_cv_ub}
	g(x,y) = g(x,y+\eta) = \gamma.
\eeq
\end{theorem}

\begin{corollary}
\label{cor:kinv_cv}
For $A \in \C^{\n \times \n}$ with $\alpha(A) < 0$, let $\gamma \in [0,1)$, $\eta \geq 0$,
and $(x_\star,y_\star)$ be a global minimizer of \eqref{eq:kinv_cont}.
If there do not exist any pairs $x,y \in \R$ with $x > 0$ such that \eqref{eq:kinv_cv_ub} holds,
then 
\beq
	\label{eq:kinv_cv_lb}
	\Kinv > \gamma - \tfrac{\eta}{2x_\star}.
\eeq
\end{corollary}

To prove \cref{thm:kinv_cv}, we will use the following topology definition and result.
\begin{definition}
Let $\mathcal{A} \subset \C$ be a bounded open (path) connected set and let $\mathcal{A}^\mathrm{C}$
be its complement.
Furthermore, let $\mathcal{B} = \{ z \in \C: z \text{ is in a bounded component of } \mathcal{A}^\mathrm{C} \}$.
Then $\mathcal{A} \cup \mathcal{B}$ is the \emph{simply connected hull} of $\mathcal{A}$, which we denote $\mathcal{A}^\mathrm{H}$.
\end{definition}

\begin{lemma}
\label{lem:topoboundaries}
Let $\mathcal{A},\mathcal{B} \subset \C$ both be bounded open (path) connected sets.
If there exist points $b_\mathrm{in}, b_\mathrm{out} \in \bd \mathcal{B}^\mathrm{H}$ such that $b_\mathrm{in} \in \mathcal{A}$
and $b_\mathrm{out} \in \mathcal{A}^\mathrm{C}$, then $\bd \mathcal{A} \cap \bd \mathcal{B} \neq \varnothing$.
\end{lemma}
\begin{proof}
Since $\bd \mathcal{B}^\mathrm{H} \subset \bd \mathcal{B}$, 
we can assume that $b_\mathrm{out}$ is in the interior of $\mathcal{A}^\mathrm{C}$, as otherwise the proof is done.
As $\mathcal{B}^\mathrm{H}$ is a bounded open simply connected set,
$\bd \mathcal{B}^\mathrm{H}$ must be connected; see, e.g., \cite[p.~345]{Bur79}.
By way of contradiction, suppose that $\bd \mathcal{A} \cap \bd \mathcal{B} = \varnothing$. 
Then $\bd \mathcal{B}^\mathrm{H} \subset \bd \mathcal{B} \subset \mathcal{A} \cup \mathrm{int} (\mathcal{A}^\mathrm{C})$.
However, since $b_\mathrm{in} \in \mathcal{A}$, $b_\mathrm{out} \in \mathrm{int} (\mathcal{A}^\mathrm{C})$,
and $\mathcal{A}$ and $\mathrm{int} (\mathcal{A}^\mathrm{C})$ are both open and nonempty disjoint sets,
it follows by definition of connected that $\bd \mathcal{B}^\mathrm{H}$ is in fact disconnected, a contradiction.
Thus, $\bd \mathcal{A} \cap \bd \mathcal{B} \neq \varnothing$ holds.
\end{proof}

\begin{proof}[Proof of \cref{thm:kinv_cv}]
If $\gamma=\Kinv$, the proof is trivially satisfied with $\eta = 0$, so assume that $\gamma \in (\Kinv,1)$.
Since $\alpha(A) < 0$, it follows that $\lim_{x\to0^+} g(x,y) = \infty$ for any $y \in \R$,
and so $g(x_\star,y_\star) = \Kinv$ with $x_\star > 0$.
Now consider the strict lower level set $\mathcal{L}_\gamma \coloneqq \{ (x,y) : g(x,y) < \gamma, x > 0\}$,
which is clearly open and also bounded; see \cite[Theorem~2.3]{Mit19a}.
Let $\mathcal{L}$ be the (open) connected component of $\mathcal{L}_\gamma$ such that
$(x_\star,y_\star) \in \mathcal{L}$ and let $\mathcal{G} \coloneqq \mathcal{L}^\mathrm{H}$, i.e., the 
simply connected hull of $\mathcal{L}$.

Now following the proof of \cite[Theorem~3.1]{Gu00} a bit more closely,
by continuity of $g(x,y)$ and the fact that $\mathcal{G}$ is bounded, 
there must exist points $b_1,b_2 \in \bd \mathcal{G}$
\[
	b_1 = (x_\star, y_\star - \eta_1) 
	\qquad \text{and} \qquad
 	b_2 = (x_\star, y_\star + \eta_2)
\]
such that 
\beq
	\label{eq:shifted}
	g(x_\star,y_\star - \eta_1) = g(x_\star,y_\star + \eta_2) = \gamma,
\eeq
where $\eta_1,\eta_2 > 0$.
Furthermore, we can assume that $\eta_1$ and $\eta_2$ are the smallest positive values 
such that \eqref{eq:shifted} holds with $b_1,b_2 \in \bd \mathcal{G}$.
Noting that 
\beq
	\label{eq:f_numerator}
	g(x,y) =\frac{\smin\left((x+\imagunit y) I - A\right)}{x} = \frac{f(x,y)}{x},
\eeq
whose numerator has a GLC of 1, it follows for any $y_1,y_2 \in \R$ that
\[
	| g(x_\star,y_1) - g(x_\star,y_2) | \leq \tfrac{1}{x_\star} | y_1 - y_2 |,
\]
i.e., $g(x_\star,y)$ with respect to $y$ has a GLC of $\tfrac{1}{x_\star}$.  
By applying this GLC to \eqref{eq:shifted}, it follows that 
\beq
	\label{eq:eta12}
	\eta_1 \geq x_\star (\gamma - \Kinv) 
	\quad \text{and} \quad \eta_2 \geq x_\star (\gamma - \Kinv),
\eeq
and so $\eta_1 + \eta_2 \geq 2x_\star(\gamma - \Kinv)$.

Now suppose that $\eta \in (0,2x_\star(\gamma - \Kinv)]$ so $\eta \leq \eta_1 + \eta_2$.
Obviously \eqref{eq:kinv_cv_ub} is satisfied if $\eta = \eta_1 + \eta_2$, 
so assume that $\eta < \eta_1 + \eta_2$.
Let
\[
	\mathcal{G}_\eta \coloneqq \{ (x,y - \eta) : (x,y) \in \mathcal{G}\},
\]
i.e., $\mathcal{G}$ shifted downward by the amount $\eta$,
and consider the line segment joining $b_1$ and $b_2$ and
the point $b_\mathrm{in} = (x_\star,y_\star + \eta_2 - \eta)$.
As $b_\mathrm{in}$ must be on this line segment, but not at its endpoints, 
$b_\mathrm{in} \in \mathcal{G}$ and 
$b_\mathrm{in} \in \bd \mathcal{G}_\eta$, since $b_2 \in \bd \mathcal{G}$.
Let $b_\mathrm{out} \in \bd \mathcal{G}_\eta$ be a lowermost point of $\bd \mathcal{G}_\eta$.
Then, as $\eta > 0$, $b_\mathrm{out} \in \mathcal{G}^\mathrm{C}$, the complement of $G$.
Since $\mathcal{G}$ and $\mathcal{G}_\eta$ are both bounded open connected sets in the plane
and $\mathcal{G}_\eta = \mathcal{G}_\eta^\mathrm{H}$, \cref{lem:topoboundaries} applies,
and so $\bd \mathcal{G} \cap \bd \mathcal{G}_\eta \neq \varnothing$.
Letting $(\tilde x, \tilde y)$ be any such point in $\bd \mathcal{G} \cap \bd \mathcal{G}_\eta$,
it follows that $(\tilde x, \tilde y + \eta) \in \bd \mathcal{G}$,
hence $(\tilde x,\tilde y)$ satisfies \eqref{eq:kinv_cv_ub}.
\end{proof}

We now consider horizontally oriented pairs of points on the $\gamma$-level set of $g(x,y)$, 
similar to \cref{thm:dtu} for $f(x,y)$.
As the horizontal orientation is actually more complicated for $\Kcon$, 
we first establish the following intermediate general result.

\begin{lemma}
\label{lem:foverx}
Let $f : (0,\infty) \to [0,\infty)$ be continuous with a GLC of $c \geq 0$
and consider the function $q(x) \coloneqq \tfrac{f(x)}{x}$ on the same domain.  
For $a,b > 0$ with $b - a = \eta > 0$, if $q(a) = q(b) = \gamma$ 
and $x_\star = \argmin_{x \in [a,b]} q(x)$ with $\gamma_\star = q(x_\star)$, then 
\[
	\gamma_\star \geq \gamma - \tfrac{\eta(c+\gamma)}{2x_\star}.
\]
\end{lemma}
\begin{proof}
We assume that $\gamma_\star < \gamma$ as otherwise the inequality clearly holds.
Since $f(x)$ has a GLC of $c$, it follows that 
\beqs
	\label{eq:glc_ub}
	f(a) - f(x_\star) + f(b) - f(x_\star) \leq c(x_\star - a) + c(b - x_\star) = c(b-a) = c\eta.
\eeqs
Meanwhile
\beqs
	\label{eq:glc_lb}
	f(a) - f(x_\star) + f(b) - f(x_\star) = a\gamma + b\gamma - 2x_\star \gamma_\star \geq 
	(a + 2x_\star -b) \gamma - 2x_\star \gamma_\star = 2x_\star (\gamma - \gamma_\star) - \eta \gamma.
\eeqs
Combining the two yields $2x_\star (\gamma - \gamma_\star) - \eta \gamma \leq c\eta$,
thus completing the proof.
\end{proof}

\begin{theorem}
\label{thm:kinv_ch}
For $A \in \C^{\n \times \n}$ with $\alpha(A) < 0$, let $\gamma \in [0,1)$, $\eta \geq 0$,
and $(x_\star,y_\star)$ be a global minimizer of \eqref{eq:kinv_cont}.
If $\Kinv \leq \gamma$ and $\eta \in \big[0,\tfrac{2x_\star}{1 +\gamma}(\gamma - \Kinv)\big]$, then 
there exists a pair $x,y \in \R$ with $x > 0$ such that
\beq
	\label{eq:kinv_ch_ub}
	g(x,y) = g(x + \eta,y) = \gamma.
\eeq
\end{theorem}

\begin{corollary}
\label{cor:kinv_ch}
For $A \in \C^{\n \times \n}$ with $\alpha(A) < 0$, let $\gamma \in [0,1)$, $\eta \geq 0$, 
and $(x_\star,y_\star)$ be a global minimizer of \eqref{eq:kinv_cont}.
If there do not exist any pairs $x,y \in \R$ with $x > 0$ such that \eqref{eq:kinv_ch_ub} holds,
then 
\beq
	\label{eq:kinv_ch_lb}
	\Kinv > \gamma - \tfrac{\eta(1 + \gamma)}{2x_\star}.
\eeq
\end{corollary}

\begin{proof}[Proof of \cref{thm:kinv_ch}]
The beginning of the proof is the same as the first paragraph
of the proof of \cref{thm:kinv_cv}.
Again consider the simply connected set $\mathcal{G}$ defined there
that contains $(x_\star,y_\star)$, a global minimizer.
By continuity of $g(x,y)$ and the fact that $\mathcal{G}$ is bounded, there must exist points
$b_1,b_2 \in \bd \mathcal{G}$ 
\[
	b_1 = (x_\star - \eta_1, y_\star ) 
	\qquad \text{and} \qquad
 	b_2 = (x_\star + \eta_2, y_\star)
\]
such that
\beq
	\label{eq:shifted_horz}
	g(x_\star - \eta_1, y_\star) = g(x_\star + \eta_2, y_\star) = \gamma,
\eeq
where $\eta_1,\eta_2 > 0$.
We again assume that $\eta_1$ and $\eta_2$ are the smallest positive values 
such that \eqref{eq:shifted_horz} holds with $b_1,b_2 \in \bd \mathcal{G}$.
Applying \cref{lem:foverx} to $q(x) \coloneqq g(x,y_\star) = \tfrac{f(x,y_\star)}{x}$ with $a = x_\star - \eta_1$ and $b = x_\star +\eta_2$,
we have that 
\[
	\eta_1 + \eta_2 \geq \tfrac{2x_\star}{1 + \gamma}(\gamma - \Kinv)
\]
as the numerator of $g(x,y)$ as rewritten in \eqref{eq:f_numerator} has a GLC of 1.

Now suppose that $\eta \in \big(0, \tfrac{2x_\star}{1 + \gamma}(\gamma - \Kinv)\big]$ so $\eta \leq \eta_1 + \eta_2$.
If $\eta = \eta_1 + \eta_2$, \eqref{eq:kinv_ch_ub} is clearly satisfied, 
so instead assume that $\eta < \eta_1 + \eta_2$.
Considering the set
$
	\mathcal{\widehat G} \coloneqq \{ (x - \eta,y) : (x,y) \in \mathcal{G}\},
$
i.e., $\mathcal{G}$ shifted to the left by the amount $\eta$,
the rest of the proof follows analogously to the end of the proof of \cref{thm:kinv_cv}.
\end{proof}

Although we have derived Kreiss constant analogues of \cref{thm:dtu,cor:dtu},
\cref{cor:kinv_cv,cor:kinv_ch} in fact assert that the $\dtu$ algorithms of \cite{Gu00,BurLO04,GuMOetal06}
will \emph{not} directly extend to Kreiss constants.  The crux of the problem 
is that these $\dtu$ methods all rely on the fact that \eqref{eq:dtu_lb} holds 
when there are no points satisfying \eqref{eq:dtu_ub},
which recall, provided a way of computing $\dtu$ via Gu's verification procedure 
to verify an upper or lower bound for $\dtu$.
However, in the Kreiss constant setting, our lower bounds given in \eqref{eq:kinv_cv_lb} and \eqref{eq:kinv_ch_lb}
are not as concrete, as they depend on $x_\star$, which is unknown.
For a given $\gamma$ and $\eta$, we do not even know if 
the lower bounds provided \eqref{eq:kinv_cv_lb} and \eqref{eq:kinv_ch_lb} would be meaningful,
as they might not even be positively valued.
Thus, to develop algorithms for $\Kcon$, crucial departures must be made.
We will do this via two different strategies.

\section{A continuous-time $\Kcon$ algorithm based on fixed-distance pairs}
\label{sec:cont}
Let $(\hat x, \hat y)$ be a local (but not global) minimizer of $g(x,y)$ and set
$\gamma \coloneqq g(\hat x, \hat y)$.
As $(\hat x, \hat y)$ is a local minimizer, we do not need to verify that $\gamma \geq \Kinv$,
 as this is obviously true.  To obtain an optimization-with-restarts algorithm, we do not necessarily 
 need to verify a lower bound either.  Instead, we can just aim to detect 
other (non-stationary) points on the $\gamma$-level set of $g(x,y)$.
Using such level-set points to restart optimization, a better (lower) minimizer
of $g(x,y)$ is guaranteed to be found.
Assuming $\gamma \in [\Kinv,1)$, \cref{thm:kinv_cv,thm:kinv_ch} assert that if 
 $\eta > 0$ is chosen sufficiently small, there must exist points satisfying either \eqref{eq:kinv_cv_ub}
or \eqref{eq:kinv_ch_ub}.
Of course, we do not know \emph{a priori} how small to choose $\eta$, so we propose to 
use backtracking, i.e., we can start with $\eta$ initially set to some large value and 
simply decrease it in a loop until level-set points for restarting optimization are found.
As long as $(\hat x, \hat y)$ is not a global minimizer, this backtracking procedure must succeed 
in finding level-set points for restarting optimization. 
Meanwhile, when $(\hat x, \hat y)$ is a global minimizer, and so $\Kcon$ has been computed,
the backtracking procedure can simply be terminated once $\eta$ falls below a tolerance.
 A high-level pseudocode using this backtracking-based globality certificate is given in \cref{alg:owr_bt}.
 To complete this algorithm, we now must develop a corresponding 2D level-set test for continuous-time Kreiss constants.
We will do this by looking for points 
satisfying \eqref{eq:kinv_cv_ub} or \eqref{eq:kinv_ch_ub},
i.e., level-set points that are a \emph{fixed distance} $\eta$ apart,
and develop a procedure inspired by Gu's 2D level-set test for~$\dtu$.

\begin{algfloat}[t]
\begin{algorithm}[H]
\floatname{algorithm}{Algorithm}
\caption{Optimization-with-restarts using backtracking}
\label{alg:owr_bt}
\begin{algorithmic}[1]
	\REQUIRE{  
		A nonnormal matrix $A \in \C^{n \times n}$ with $\alpha(A) < 0$,
		$x_0 > 0$ and $y_0 \in \R$ such that $g(x_0,y_0) < 1$,
		and a tolerance $\eta_\mathrm{tol} > 0$.
		}
	\ENSURE{ 
		$\gamma^{-1} \approx \Kcon$ (continuous-time).
		\\ \quad
	}
	
	\WHILE { true } 
		\STATE $(\hat x, \hat y) \gets $ computed local/global minimizer of \eqref{eq:kinv_cont} initialized from $(x_0,y_0)$
		\STATE $\gamma \gets g(\hat x, \hat y)$
		\STATE $\eta \gets $ some positive value $\gg \eta_\mathrm{tol}$
		\WHILE { true } 
			\STATE Perform the 2D level-set test of \S\ref{sec:2d_test_fixed} with current $\gamma$ and $\eta$
			\IF { test finds \emph{any} level-set points }
			 	\STATE $(x_0,y_0) \gets$ one of these points
				\BREAK $\, \qquad$ \COMMENT{Goto line 2 to restart optimization.}
			\ELSIF { $\eta \leq \eta_\mathrm{tol}$ }
				\RETURN $\quad$ \COMMENT{Found a global minimizer to tolerance.}
			\ELSE
				\STATE $\eta \gets c \eta$ for some constant $c \in (0,1)$
			\ENDIF
		\ENDWHILE
	\ENDWHILE
\end{algorithmic}
\end{algorithm}
\algnote{
For simplicity of the pseudocodes, we assume here and in \cref{alg:owr} that (a) optimization 
always converges to local or global minimizers exactly, i.e., not approximately or to other stationary points,
and (b) points found by the certificate test (if any) are never exactly stationary.
}
\end{algfloat}

\subsection{A 1D vertical level-set test}
\label{sec:1d_test}
Before we develop our 2D level-set test for $g(x,y)$, we will need the following theorem 
which will allow us to obtain all the points on the $\gamma$-level set of $g(x,y)$
along a chosen vertical line.
\begin{theorem}
\label{thm:1d_vert}
Given $\gamma, x,y \in \R$, with $\gamma \geq 0$ and $x \neq 0$,
$\gamma$ is a singular value of $G(x,y)$ defined in \eqref{eq:g_cont} if and only if $\imagunit y$ is an eigenvalue of
the Hamiltonian matrix 
\beq
\label{eq:1d_vert_mat}
\begin{bmatrix} 
	A - xI   		& \gamma xI \\
	-\gamma xI 	& xI - A^*
\end{bmatrix}.
\eeq
\end{theorem}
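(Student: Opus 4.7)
The plan is to reduce the claim to the classical Jordan--Wielandt-style characterization of singular values as eigenvalues of a $2\n \times 2\n$ block matrix, and then to rearrange the resulting $2\n \times 2\n$ eigenvalue system into the stated Hamiltonian form.

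First, I would observe that since $G(x,y) = ((x+\imagunit y)I - A)/x$, scaling shows $\gamma \geq 0$ is a singular value of $G(x,y)$ if and only if $\gamma |x|$ is a singular value of $M \coloneqq (x+\imagunit y) I - A$. Next I would invoke the standard fact that a nonnegative scalar $\sigma$ is a singular value of $M$ if and only if there exist $v, u \in \C^\n$, not both zero, with $Mv = \sigma u$ and $M^* u = \sigma v$. The key auxiliary observation is that replacing $u$ by $-u$ leaves the ``not both zero'' condition intact and flips the sign of $\sigma$ throughout, so the same equivalence in fact holds with $\sigma$ replaced by any signed real scalar $s$ having $|s| = \sigma$. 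Taking $s = \gamma x$ (which has $|s| = \gamma |x|$), we conclude that $\gamma$ is a singular value of $G(x,y)$ if and only if there is some $(v,u) \ne (0,0)$ with $Mv = (\gamma x) u$ and $M^* u = (\gamma x) v$.

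Then I would substitute $M = (x+\imagunit y) I - A$ and $M^* = (x - \imagunit y) I - A^*$, splitting each equation into its real-shift and imaginary-shift parts. Moving the terms involving $xI$ and $A$ to the left and isolating the $\imagunit y$ terms on the right yields
\[
(A - xI) v + (\gamma x) u = \imagunit y \, v, \qquad -(\gamma x) v + (xI - A^*) u = \imagunit y \, u,
\]
which is nothing but the eigenvalue equation
\[
\begin{bmatrix} A - xI & \gamma x I \\ -\gamma x I & xI - A^* \end{bmatrix}
\begin{bmatrix} v \\ u \end{bmatrix}
= \imagunit y \begin{bmatrix} v \\ u \end{bmatrix}.
\]
Since $(v,u) \neq 0$ is exactly the condition for this to be a valid eigenvector, the ``if and only if'' is established. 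A quick check against the definition $(JH)^* = JH$ verifies that the $2\n \times 2\n$ matrix above is indeed Hamiltonian, as claimed.

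The only delicate step is the signed-scalar version of the singular-value characterization in the reduction from $\gamma |x|$ to $\gamma x$; this is what preserves the symmetric block sign pattern (and hence the Hamiltonian structure) uniformly in the sign of $x$. Everything else is routine algebra, and I do not anticipate any substantive obstacle.
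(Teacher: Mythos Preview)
Your proof is correct and follows essentially the same route as the paper: both arguments start from the singular-value equations $G(x,y)v=\gamma u$, $G(x,y)^*u=\gamma v$, clear the denominator $x$, and rearrange into the Hamiltonian eigenvalue equation. The only cosmetic difference is that the paper simply multiplies both singular-value equations by $x$ directly (which automatically produces $\gamma x$ regardless of the sign of $x$), whereas you first pass to $\gamma|x|$ as a singular value of $(x+\imagunit y)I-A$ and then invoke the $u\mapsto -u$ sign-flip to reach $\gamma x$; the paper's shortcut avoids that detour but the substance is identical.
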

\begin{proof}
It is clear that the matrix is Hamiltonian.
Suppose $\gamma$ is a singular value $G(x,y)$ with left and right singular vectors $u$ and $v$,
and so
\[
\thinmuskip=2.5mu
\medmuskip=3.5mu
\thickmuskip=4.5mu
	\gamma 
	\begin{bmatrix} u \\ v \end{bmatrix}
	=
	\begin{bmatrix} 
		G(x,y) 	& 0 	\\
		0 		& G(x,y)^* 
	\end{bmatrix}
	\begin{bmatrix} v \\ u \end{bmatrix}
	\ \Leftrightarrow \
	\gamma 
	x
	\begin{bmatrix} u \\ v \end{bmatrix}
	=
	\begin{bmatrix} 
		(x + \imagunit y)I - A 	& 0\\
		0				& (x - \imagunit y)I - A^* 	
	\end{bmatrix}
	\begin{bmatrix} v \\ u \end{bmatrix}.	
\]
Rearranging terms and multiplying the bottom block row by $-1$,
this is equivalent to
\[
\thinmuskip=1.9mu
\medmuskip=2.9mu
\thickmuskip=3.9mu
	\gamma 
	x
	\begin{bmatrix} u \\ {-}v \end{bmatrix}
	+
	\begin{bmatrix} 
		A - xI 	& 0\\
		0		& xI - A^* 	
	\end{bmatrix}
	\begin{bmatrix} v \\ u \end{bmatrix}
	= 
	\imagunit y
	\begin{bmatrix} v \\ u \end{bmatrix}
	\ \Leftrightarrow \
	\begin{bmatrix} 
		A - xI 		& \gamma xI \\
		-\gamma xI	& xI - A^* 	
	\end{bmatrix}
	\begin{bmatrix} v \\ u \end{bmatrix}
	= 
	\imagunit y
	\begin{bmatrix} v \\ u \end{bmatrix}.
\]
\end{proof}

In fact, for a given $\gamma \geq \Kinv$ and $x \coloneqq \hat x \neq 0$, 
computing the imaginary eigenvalues of \eqref{eq:1d_vert_mat} may provide more than 
the $\gamma$-level set points of $g(x,y)$ along the vertical line $x = \hat x$.
This is because if $\imagunit \hat y$ is an eigenvalue of \eqref{eq:1d_vert_mat},
\cref{thm:1d_vert} asserts that $\gamma$ is a singular value of $F(\hat x,\hat y)$,
but not necessarily the minimum one; if this happens,
$(\hat x, \hat y)$ would be on a level set of $g(x,y)$ lower than the $\gamma$-level set 
and thus be an even better point for restarting optimization.
Finally, note that structure-preserving eigensolvers \cite{BenMX98a,BenMX98b} 
can be used for more reliable detection of imaginary eigenvalues of Hamiltonian matrices
like \eqref{eq:1d_vert_mat}.

\subsection{A 2D level-set test for fixed-distance pairs}
\label{sec:2d_test_fixed}
We now derive a new 2D level-set test for $g(x,y)$.  
Per \cref{thm:kinv_cv,thm:kinv_ch}, 
the choice of orientation for pairs of points on the $\gamma$-level set of $g(x,y)$ has consequences,
hence we will develop our new continuous-time Kreiss constant procedure for arbitrary orientation.
Specifically, given $\eta > 0$ and angle $\theta \in (-\tfrac{\pi}{2}, \tfrac{\pi}{2}]$, 
we will look for points a fixed distance $\eta$ apart of the form 
$(\hat x, \hat y)$ and $(\hat x + \eta \costh, \hat y + \eta \sinth)$
such that $g(x,y) = \gamma$ holds at both of them and $\hat x > 0$.

Suppose that $\gamma$ is a singular value of both $G(x,y)$ and $G(x+\eta\costh,y+\eta\sinth)$, with respective left and right singular vectors pairs $u$,$v$ and $\hat u$,$\hat v$.
By \cref{thm:1d_vert} for $G(x,y)$ and following a similar argument as in its proof for 
$G(x+\eta\costh,y+\eta\sinth)$, the following two Hamiltonian matrices must share an imaginary eigenvalue
$\imagunit y$, i.e., 
\bseq
\label{eq:eig_pair_ctheta}
\begin{align}
	\label{eq:eig_pair_ctheta_A1}
	\begin{bmatrix}
		A - xI   		& \gamma xI \\
		-\gamma xI 	& xI - A^*
	\end{bmatrix}
	\begin{bmatrix} v \\ u \end{bmatrix}
	&= \imagunit y \begin{bmatrix} v \\ u \end{bmatrix} \\
	\label{eq:eig_pair_ctheta_A2}
	\begin{bmatrix}
		A - (x+\eta\eit)I   		& \gamma (x+\eta\costh) I \\
		-\gamma (x+\eta\costh) I 	& (x+\eta\emit)I - A^*
	\end{bmatrix}
	\begin{bmatrix} \hat v \\ \hat u \end{bmatrix}
	&= \imagunit y \begin{bmatrix} \hat v \\ \hat u \end{bmatrix}.
\end{align}
\eseq
Let $A_1$ and $A_2$ respectively denote the square matrices in \eqref{eq:eig_pair_ctheta_A1}
and \eqref{eq:eig_pair_ctheta_A2} and 
let $W = \begin{bsmallmatrix} v \\ u\end{bsmallmatrix} \begin{bsmallmatrix} \hat v^* & \hat u ^* \end{bsmallmatrix} \neq 0$ so that we have $A_1W = \imagunit y W$ and $A_2W^* = \imagunit y W^*$.  
To eliminate~$y$, we take the conjugate transpose of the second 
and then add the two together to obtain the Sylvester equation:
\beq
	\label{eq:sylv_ctheta}
	\begin{bmatrix}
		A - xI   		& \gamma xI \\
		-\gamma xI 	& xI - A^*
	\end{bmatrix}
	W + W
	\begin{bmatrix}
		A^* - (x+\eta\emit)I  		& -\gamma (x+\eta\costh) I \\
		\gamma (x+\eta\costh) I 	& (x+\eta\eit)I - A
	\end{bmatrix}
	= 0.
\eeq
Thus, if \eqref{eq:sylv_ctheta} also has a nonzero solution $W \in \C^{2\n\times2\n}$,
$A_1$ and $-A_2^*$ must share an eigenvalue.
As $A_1$ and $A_2$ are Hamiltonian matrices, their spectra
have imaginary-axis symmetry, and so $A_1$ and $A_2$
must also have an eigenvalue in common.
Now separating out all terms involving $x$, we get
\begin{multline}
	\label{eq:sylv_ctheta_x}
	\left(
	\begin{bmatrix} 
		A 	& 0 \\ 
		0 	& -A^*
	\end{bmatrix}
	W + W
	\begin{bmatrix} 
		A^* - \eta \emit I  		& -\gamma \eta \costh I \\
		\gamma \eta \costh I 		& \eta \eit I - A
	\end{bmatrix} 
	\right) \\
	- x 
	\left( 
	\begin{bmatrix} 
		I 		& -\gamma I \\ 
		\gamma I  & -I 
	\end{bmatrix} 
	W + W
	\begin{bmatrix} 
		I 		& \gamma I \\
		-\gamma I  & -I 
	\end{bmatrix}
	\right) = 0.
\end{multline}
Rewriting both Sylvester forms using the vectorize operator, 
and letting $w = \vecop(W)$, results in the generalized eigenvalue problem
\begin{align}
	\label{eq:large_eig_ctheta}
	\mathcal{A}_1 w &= x \mathcal{A}_2 w, 
	\quad \text{where} \\
	\nonumber
	\mathcal{A}_1 &=  
	I_{2n} \otimes 
	\begin{bmatrix} 
		A 	& 0 \\ 
		0 	& -A^*
	\end{bmatrix}
	+
	\begin{bmatrix} 
		\, \overline{A} - \eta \emit I  	& \gamma \eta \costh I \\
		-\gamma \eta \costh I 		& \eta \eit I - A\tp
	\end{bmatrix}
	\otimes I_{2n}, \\
	\nonumber
 	\mathcal{A}_2 &= 
	I_{2n} \otimes 
	\begin{bmatrix} 
		I 		& -\gamma I \\ 
		\gamma I  & -I 
	\end{bmatrix} 
	+
	\begin{bmatrix} 
		I 		& -\gamma I \\
		\gamma I  & -I 
	\end{bmatrix}
	\otimes I_{2n}.
\end{align}

Our 2D level-set test for fixed-distance pairs works as follows.
Given $\gamma \in [\Kinv,1)$, $\eta \geq 0$, and $\theta \in (-\tfrac{\pi}{2},\tfrac{\pi}{2}]$,
we first compute all the eigenvalues of \eqref{eq:large_eig_ctheta}.
If there are no (finite) positive real eigenvalues of \eqref{eq:large_eig_ctheta},
then the test is finished and returns no level-set points.
Otherwise, 
additional calculations are done to ascertain whether or not any level-set points have been detected.
For each eigenvalue $\hat x > 0$ of \eqref{eq:large_eig_ctheta}, we have that 
the vertical line specified by $\hat x$, and, if $|\theta| < \tfrac{\pi}{2}$,
the vertical line $\hat x + \eta \cos \theta$, \emph{may} contain points on the 
$\gamma$-level set of $g(x,y)$.
To determine this, for each of these vertical lines, say, $x \coloneqq \hat x$, 
we then apply \cref{thm:1d_vert} and compute all the eigenvalues 
of the corresponding Hamiltonian matrix \eqref{eq:1d_vert_mat}.
If this matrix has no imaginary eigenvalues, then no level-set points have been detected
on this vertical line.  Otherwise, all the points $(\hat x, \hat y)$ such that $\imagunit \hat y$
is an imaginary eigenvalue of this matrix are added to the list of detected level-set points of $g(x,y)$ to return.
Optionally, for each of these points, one could additionally check whether or not $\gamma$ is the minimum singular value of $G(\hat x,\hat y)$, but this is not strictly necessary; 
as discussed previously, $(\hat x, \hat y)$ must be on the $\gamma$-level set of $g(x,y)$ or a lower one,
either of which suffice for restarting optimization to obtain a better (lower) minimizer of $g(x,y)$.
If no level-set points are detected, for any of the vertical lines, then the test returns no points.
Otherwise, all the detected level-set points are returned. 

Our new 2D level-set test differs from the procedure in \cite{Gu00} (and \cite{GuMOetal06}) in a significant way;
in \cref{rem:improved_test}, we explain how our modifications here 
greatly improve the reliability of these 2D level-set tests.

In terms of cost, in extreme situations there may be up to $\bigO(\n^2)$ potential vertical lines detected,
which means that $\bigO(\n^2)$ Hamiltonian eigenvalue problems of dimension $2\n \times 2\n$
must be solved.  However, when using standard dense eigensolvers, the overall 
work complexity of our procedure is actually $\bigO(\n^6)$, 
as we must first compute the eigenvalues of \eqref{eq:large_eig_ctheta}, 
which is a matrix pencil with square matrices of dimension $4\n^2$.
In terms of constant factors, if $A$ is real, $\mathcal{A}_1$ is real if and only if
$\theta = 0$.

\subsection{Properties of the eigensystem $\mathcal{A}_1w = x \mathcal{A}_2w$ and its solution}
\label{sec:2d_test_fixed_props}
Unlike the $4\n^2 \times 4\n^2$ generalized eigenvalue problem
\eqref{eq:large_eig_ctheta}, 
Gu derived a smaller $2\n^2 \times 2\n^2$ generalized eigenvalue problem for $\dtu$ \cite[Eq.~3.13]{Gu00}.
In \cite[Section~3.1]{GuMOetal06}, this was then simplified further to a computationally easier $2\n^2 \times 2\n^2$ standard eigenvalue problem \cite[Eq.~3.7]{GuMOetal06}.
Via the following result, we show that the rank of $\mathcal{A}_2$ is $2n^2$, i.e., half its dimension.

\begin{lemma}
\label{lem:UkVk}
Let $C \coloneqq \begin{bsmallmatrix} aI & -bI \\ bI & -aI\end{bsmallmatrix} \in \C^{2n \times 2n}$ with $a,b \in \C$ and $b \neq 0$.
For $k \in \N$,
the Kronecker sum 
$I_{2k} \otimes C + C \otimes I_{2k}  = \mathcal{U}_k\mathcal{V}_k \in \C^{4kn \times 4kn}$
and $\mathcal{V}_k \mathcal{U}_k = 2I_k \otimes C$, where
\bseq
	\label{eq:UkVk}
	\begin{align}
	\mathcal{U}_k &\coloneqq
	\left[
	\renewcommand*{\arraystretch}{1.3}
	\begin{array}{c}	
	I_{k} \otimes 
	\begin{bsmallmatrix}
	2aI 	& -bI \\
	bI 	& 0 \\
	\end{bsmallmatrix} \\
	\hdashline
	b I_{2kn}
	\end{array}
	\right]
	\in \C^{4kn \times 2kn}, \\
	\mathcal{V}_k &\coloneqq
	\left[
	\begin{array}{c : c}
	I_{2kn} & 
	I_k \otimes
	\begin{bsmallmatrix}
	0 	& -I \\
	I 	& -2ab^{-1} I
	\end{bsmallmatrix} 
	\end{array}
	\right]
	\in \C^{2kn \times 4kn}.
	\end{align}
\eseq
\end{lemma}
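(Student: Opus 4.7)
The plan is to verify both identities in \cref{lem:UkVk} by direct block-matrix computation, exploiting the mixed-product identity $(P \otimes Q)(R \otimes S) = (PR) \otimes (QS)$ and the basic fact that $I_p \otimes I_q = I_{pq}$. Since $\mathcal{U}_k$ is naturally a $2 \times 1$ block column and $\mathcal{V}_k$ a $1 \times 2$ block row, each with blocks of size $2kn \times 2kn$, both products $\mathcal{V}_k \mathcal{U}_k$ (a single $2kn \times 2kn$ sum) and $\mathcal{U}_k \mathcal{V}_k$ (a $2 \times 2$ outer product of blocks) fall out almost mechanically.

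First I would handle $\mathcal{V}_k \mathcal{U}_k = 2I_k \otimes C$, which is the easier identity: it is the sum of a Kronecker product with the identity and a scalar multiple of a Kronecker product, so after collapsing the $I_k \otimes (\cdot)$ factors, one is left with adding a pair of $2 \times 2$ matrices whose sum should visibly equal $2C / I_n$-scaled blocks, i.e., $2\begin{bsmallmatrix} aI & -bI \\ bI & -aI \end{bsmallmatrix}$. No dimension-shuffling is required at this stage.

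The second identity $\mathcal{U}_k \mathcal{V}_k = I_{2k} \otimes C + C \otimes I_{2k}$ is the real content. My plan is to compute the four $2kn \times 2kn$ blocks of $\mathcal{U}_k \mathcal{V}_k$ directly: the off-diagonal blocks immediately come out to $\pm b I_{2kn}$ (one from $b I_{2kn} \cdot I_{2kn}$, one from $(I_k \otimes M)(bI_{2kn})$-type cancellation in the $2 \times 2$ inner matrices), and the two diagonal blocks reduce via the mixed-product identity to $I_k \otimes M_\pm$ for two explicit $2 \times 2$ matrices $M_\pm$. Separately, I would re-express $I_{2k} \otimes C + C \otimes I_{2k}$ in the same $2 \times 2$ block form of size $2kn$: using $I_{2k} = I_2 \otimes I_k$ shows $I_{2k} \otimes C = I_2 \otimes (I_k \otimes C)$ is block-diagonal with two copies of $I_k \otimes C$, while partitioning $C$ itself into its four $n \times n$ scalar-multiple-of-identity blocks gives $C \otimes I_{2k} = \begin{bsmallmatrix} aI_{2kn} & -bI_{2kn} \\ bI_{2kn} & -aI_{2kn} \end{bsmallmatrix}$. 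Comparing corresponding blocks then verifies equality; in particular, the diagonal blocks match because $I_k \otimes C \pm aI_{2kn}$ absorbs the $\pm a$ shift into the diagonal of $C$, yielding exactly $M_\pm$.

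The only real obstacle, and it is a bookkeeping one rather than a conceptual one, is being consistent about the two distinct ways of reshaping a $4kn \times 4kn$ Kronecker product into $2 \times 2$ blocks of size $2kn$: namely, splitting the outer $2k$ factor of $I_{2k} \otimes C$ via $I_{2k} = I_2 \otimes I_k$ versus splitting the outer $2$ factor of $C \otimes I_{2k}$ via the $2 \times 2$ block form of $C$. Once both sides are brought into the same $2 \times 2$ block-of-$2kn$ layout, matching the four blocks is a short calculation, and no nontrivial algebraic identities beyond the mixed-product rule are needed.
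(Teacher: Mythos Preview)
Your proposal is correct and follows essentially the same approach as the paper's proof: compute the four $2kn\times 2kn$ blocks of $\mathcal{U}_k\mathcal{V}_k$ directly, then recognize the result as the sum of the block-diagonal matrix $I_2\otimes(I_k\otimes C)=I_{2k}\otimes C$ and the scalar-block matrix $\begin{bsmallmatrix} aI_{2kn} & -bI_{2kn}\\ bI_{2kn} & -aI_{2kn}\end{bsmallmatrix}=C\otimes I_{2k}$, with $\mathcal{V}_k\mathcal{U}_k$ handled by the same one-line Kronecker collapse. The only cosmetic difference is that you verify $\mathcal{V}_k\mathcal{U}_k$ first, and you explicitly flag the bookkeeping point about the two compatible $2\times 2$ partitions, which the paper leaves implicit.
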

\begin{proof}
The factorization follows from the following if-and-only-if equivalences
\begin{align}
	\label{eq:UkVk_alt}
	\mathcal{U}_k\mathcal{V}_k &=
	\left[
	\renewcommand*{\arraystretch}{1.3}
	\begin{array}{c : c}	
	I_{k} \otimes 
	\begin{bsmallmatrix}
	2aI 	& -bI \\
	bI 	& 0 \\
	\end{bsmallmatrix} 
	& 
	-b I_{2kn}
	\\
	\hdashline
	b I_{2kn}
	& 
	I_k \otimes
	\begin{bsmallmatrix}
	0 	& -bI \\
	bI 	& -2aI 
	\end{bsmallmatrix} 
	\end{array}
	\right],  \\
	\nonumber
	&= 
	\left[
	\renewcommand*{\arraystretch}{1.3}
	\begin{array}{c : c}	
	I_{k} \otimes 
	\begin{bsmallmatrix}
	aI 	& -bI \\
	bI 	& -aI \\
	\end{bsmallmatrix} 
	& 
	0
	\\
	\hdashline
	0
	& 
	I_k \otimes
	\begin{bsmallmatrix}
	aI 	& -bI \\
	bI 	& -aI 
	\end{bsmallmatrix} 
	\end{array}
	\right]
	+ 
	\left[
	\begin{array}{c:c}
	aI_{2kn}	& -bI_{2kn} \\
	\hdashline
	bI_{2kn} 	& -a I_{2kn} \\
	\end{array}
	\right], \\
	\nonumber	
	&= 
	I_{2k} \otimes 
	\begin{bsmallmatrix}
	aI 	& -bI \\
	bI 	& -aI 
	\end{bsmallmatrix} 
	+ 
	\begin{bsmallmatrix}
	aI 	& -bI \\
	bI 	& -aI 
	\end{bsmallmatrix} 
	\otimes I_{2k},
\end{align}
while
$
	\mathcal{V}_k \mathcal{U}_k 
	= I_k \otimes \begin{bsmallmatrix} 2aI & -bI \\ bI & 0 \end{bsmallmatrix} 
		+ bI_k \otimes \begin{bsmallmatrix} 0 & -I \\ I & -2ab^{-1}I \end{bsmallmatrix} 
	= 2 I_k \otimes C.
$
\end{proof}

Applying \cref{lem:UkVk} to $\mathcal{A}_2$ with $a \coloneqq 1$, $b \coloneqq \gamma\neq0$, and $k \coloneqq n$, 
$\mathcal{V}_k$ gives the reduced row echelon form of $\mathcal{A}_2$,
and so the rank of $\mathcal{A}_2$ is $2\n^2$.
Thus, is it natural to ask if \eqref{eq:large_eig_ctheta} can also be reduced
to a $2\n^2 \times 2\n^2$ generalized eigenvalue problem, and perhaps even a standard one.
Unfortunately, the presence of nonzero off-diagonal blocks $\pm\gamma I$ in 
$
	\big[
	\begin{smallmatrix} 
		I 		& -\gamma I \\
		\gamma I  & -I 
	\end{smallmatrix}
	\big]
$ from $\mathcal{A}_2$ appears to prevent this.
If one attempts to follow \cite{Gu00,GuMOetal06} and similarly partition $W$ into four $\n \times \n$ blocks, 
multiplying out \eqref{eq:sylv_ctheta_x}
for each block of $W$ results in four equations that all involve $x$. 
In contrast, in \cite{Gu00,GuMOetal06}, eigenvalue $x$ ($\alpha$ in their notation)
only appears in the two corresponding equations for the diagonal blocks of $W$ ($X$ in their notation); 
for the off-diagonal blocks, eigenvalue $x$ (again $\alpha$ in their notation) does not appear in these other two equations, 
since as is seen in \cite[Eq.~3.5]{GuMOetal06}, it ends up being multiplied by zero.
Consequently, the reduction techniques of \cite{Gu00,GuMOetal06} do not seem to be applicable to
\eqref{eq:large_eig_ctheta}.

However, since $\mathcal{A}_2$ is singular, $\mathcal{A}_1- \lambda \mathcal{A}_2$ 
can at least be \emph{numerically} deflated into a smaller pencil 
$\mathcal{\widetilde A}_1- \lambda \mathcal{\widetilde A}_2$ whose spectrum is the set of 
finite eigenvalues of \eqref{eq:large_eig_ctheta}; see the deflation routines of
\cite[Section~4.3]{morBenW20d} and \cite[Algorithm~3, Chapter~3.1]{Koe21},
the former of which is implemented in the \texttt{ml\_ct\_dss\_adtf} routine from MORLAB \cite{morBenW19b}.
Although this iterative deflation technique is cubic work, 
and so also $\bigO(\n^6)$ work for \eqref{eq:large_eig_ctheta},
deflating matrix pencils this way can be faster
than computing their eigenvalues with the QZ algorithm.
Furthermore, for \eqref{eq:large_eig_ctheta}, deflation results in a matrix pencil of half the order, 
since it removes $2\n^2$ infinite eigenvalues.
As a result, deflating and then computing the eigenvalues of the resulting pencil 
may be even faster than computing the eigenvalues of \eqref{eq:large_eig_ctheta} directly.
That matrix $\mathcal{\widetilde A}_2$ in the reduced pencil is nonsingular also provides a second important benefit.
Although we must compute the real eigenvalues of \eqref{eq:large_eig_ctheta},
we cannot expect its real eigenvalues to be exactly real in the presence of rounding errors.
The key question then is how far away from the real axis can a computed eigenvalue be allowed to be
while still being \emph{deemed} a real eigenvalue of \eqref{eq:large_eig_ctheta}?
For the reduced pencil, a reliable tolerance is  
$\texttt{tol} \cdot \epsilon_\mathrm{mach} \cdot \| \mathcal{\widetilde A}_2^{-1} \mathcal{\widetilde A}_1 \|_\infty$,
where $\texttt{tol} > 1$ is provided by the user and $\epsilon_\mathrm{mach}$ is the machine precision.
Note that we still recommend computing the eigenvalues of $\mathcal{\widetilde A}_1- \lambda \mathcal{\widetilde A}_2$ as a generalized eigenvalue problem, instead of using the matrix $\mathcal{\widetilde A}_2^{-1} \mathcal{\widetilde A}_1$,
as we have observed that the condition number of $\mathcal{\widetilde A}_2$ is generally very large in practice.

\subsection{Faster computation of the real eigenvalues of $\mathcal{A}_1w = x \mathcal{A}_2w$}
\label{sec:fast_eig_cont}
We now show how the $\bigO(\n^6)$ theoretical work complexity of \cref{alg:owr_bt}
can be reduced.
To do this, we will work with the matrices in \eqref{eq:large_eig_ctheta} and
adapt the divide-and-conquer approach proposed in \cite[Section~3.3.2]{GuMOetal06}
for faster computation of $\dtu$.
At a high level, this efficiency improvement relies on two principles.
First, although the matrices in \eqref{eq:large_eig_ctheta} are $4\n^2 \times 4\n^2$ in size,
they arose from vectorizing the two corresponding $2\n \times 2\n$ Sylvester forms in \eqref{eq:sylv_ctheta_x}.
As such, applying $\mathcal{A}_1$ and $\mathcal{A}_2$, or their inverses to a vector
can actually be done with just $\bigO(\n^3)$ work.
In turn, this means that for any shift $\shift \in \C$, $(\mathcal{A}_1 - \shift \mathcal{A}_2)^{-1}$
can be applied to a vector with $\bigO(\n^3)$ work (we will clarify how these computations
are done in a moment).
Consequently, a shift-and-invert eigenvalue solver, e.g., \texttt{eigs} in \matlab, 
can be employed to find the eigenvalues of \eqref{eq:large_eig_ctheta} 
that are closest to a shift $\shift$ with only $\bigO(n^3)$ work.
Second, given a matrix $\mathcal{X} \in \C^{q \times q}$, suppose one only wants its eigenvalues that are along a line segment, say, an interval $[0,D]$ on the real axis for some $D > 0$.
Then the recursive iteration given by \cite[Algorithm~4]{GuMOetal06},
which uses a shift-and-invert eigensolver,
can locate all eigenvalues of $\mathcal{X}$ in $[0,D]$ with at most $2q + 1$ shifts in the worst case
and $\bigO(\sqrt{q})$ shifts if the eigenvalues of $\mathcal{X}$ are distributed uniformly.
For brevity, we forgo the details of describing \cite[Algorithm~4]{GuMOetal06},
but note that the cost of choosing $D$ unnecessarily large only results in about four extra shifts \cite[p.~490]{GuMOetal06}.
Thus, the theory says that by adapting this divide-and-conquer technique to compute 
the positive real eigenvalues of \eqref{eq:large_eig_ctheta}, 
the overall work complexity of \cref{alg:owr_bt} will be reduced to 
$\bigO(\n^4)$ work on average and $\bigO(\n^5)$ in the worst case.

We now explain how $\mathcal{A}_1$, $\mathcal{A}_2$, and $(\mathcal{A}_1 - \shift \mathcal{A}_2)^{-1}$
can all be applied to a vector $w \in \C^{4\n^2}$ with at most $\bigO(\n^3)$ work.
As $\mathcal{A}_2$ has only $10\n^2$ nonzero entries, it suffices to store it in a sparse matrix format.
Computing $\mathcal{A}_1 w$
can be done efficiently via vectorizing the first Sylvester form in \eqref{eq:sylv_ctheta_x},
i.e., 
$ 
	\vecop
	\left(
	\begin{bsmallmatrix} 
		A 	& 0 \\ 
		0 	& -A^*
	\end{bsmallmatrix}
	W + W
	\begin{bsmallmatrix} 
		A^* - \eta \emit I  		& -\gamma \eta \costh I \\
		\gamma \eta \costh I 		& \eta \eit I - A
	\end{bsmallmatrix}
	\right),
$ 
where $W \in \C^{2\n \times 2\n}$ and $w = \vecop(W)$.
The dominant cost in obtaining $\mathcal{A}_1 w$ is
the two matrix multiplies with $W$, hence it too can be done in $\bigO(\n^3)$ work.
For $y \in \C^{4n^2}$, we can efficiently obtain $w = (\mathcal{A}_1 - \shift \mathcal{A}_2)^{-1}y$ 
by considering $(\mathcal{A}_1 - \shift \mathcal{A}_2) w = y$.
This ``unvectorizes" into \eqref{eq:sylv_ctheta}, provided that $x$ is replaced with $s$
and the zero in its right-hand side is replaced by $Y$, where $y = \vecop(Y)$.
By solving the resulting Sylvester equation and vectorizing its solution $W$, 
$w = (\mathcal{A}_1 - \shift \mathcal{A}_2)^{-1}y$ is computed in $\bigO(\n^3)$ work.

\section{Continuous-time $\Kcon$ algorithms based on variable-distance pairs}
\label{sec:alternatives}
Having developed the first globally convergent iteration for continuous-time Kreiss constants,
we now develop two more, namely \cref{alg:owr,alg:tri}, which can be considered closer analogues of the 
two $\dtu$ methods of \cite{BurLO04} described in \S\ref{sec:dtu}.
As previously  discussed  at the end of \S\ref{sec:analogues}, 
the lower bounds provided by \cref{cor:kinv_cv,cor:kinv_ch}, due to their dependency 
on the unknown $x_\star$, prevent creating direct $\Kcon$ analogues of the 
trisection and optimization-with-restarts of \cite{BurLO04}.  
However, these lower bounds are the result of the assumption of looking for 
pairs of level-set points that are a \emph{fixed distance} $\eta$ apart,
as Gu originally used for \cref{thm:dtu}.
As we are about to show, if we change this assumption, 
then we can obtain \emph{different} lower bound results
than those given in \eqref{eq:kinv_cv_lb} and \eqref{eq:kinv_ch_lb}.

\begin{theorem}
\label{thm:kinv_cv_vari}
For $A \in \C^{\n \times \n}$ with $\alpha(A) < 0$,
let $\gamma \in [0,1)$,
$\eta \geq 0$, and $(x_\star,y_\star)$ be a global minimizer of \eqref{eq:kinv_cont}.
If $\Kinv \leq \gamma$ and $\eta \in [0,2(\gamma - \Kinv)]$, then 
there exists a pair $x,y \in \R$ with $x > 0 $ such that
\beq
	\label{eq:kinv_cv_vari_ub}
	g(x,y) = g(x,y+x\eta) = \gamma.
\eeq
\end{theorem}

\begin{corollary}
\label{cor:kinv_cv_vari}
For $A \in \C^{\n \times \n}$ with $\alpha(A) < 0$, let $\gamma \in [0,1)$, $\eta \geq 0$,
and $(x_\star,y_\star)$ be a global minimizer of \eqref{eq:kinv_cont}.
If there do not exist any pairs $x,y \in \R$ with $x > 0$ such that \eqref{eq:kinv_cv_vari_ub} holds,
then 
\beq
	\label{eq:kinv_cv_vari_lb}
	\Kinv > \gamma - \tfrac{\eta}{2x_\star}.
\eeq
\end{corollary}

\begin{proof}[Proof of \cref{thm:kinv_cv_vari}]
The proof follows similarly to the proof of \cref{thm:kinv_cv} with the following 
modification.
By using the variable distance $x\eta$, for $\gamma \in [\Kinv,1)$
we instead obtain that $\eta \in [0,2(\gamma - \Kinv)]$ is a sufficient condition for \eqref{eq:kinv_cv_vari_ub} to hold;
this is because this choice cancels out the $x_\star$ in the proof, as $|y_1 - y_2| = x_\star \eta $.
\end{proof}

Thus, by looking for vertical pairs of points that are this particular \emph{variable distance} apart, i.e., $x \eta$,
a corresponding certificate procedure would either assert that $\gamma \geq \Kinv$ holds or that \eqref{eq:kinv_cv_vari_lb} does.  Such a certificate would avoid the need for the backtracking procedure that was necessary for \cref{alg:owr_bt}.
While this might seem to be obviously preferable to \cref{alg:owr_bt}, as a bit of foreshadowing,
we note that the large eigenvalue problem that results for this variable-distance certificate 
is quite different from \eqref{eq:large_eig_ctheta} and has its own downsides.
We now describe this certificate to complete \cref{alg:owr,alg:tri}.

\begin{algfloat}[t]
\begin{algorithm}[H]
\floatname{algorithm}{Algorithm}
\caption{Optimization-with-restarts (no backtracking)}
\label{alg:owr}
\begin{algorithmic}[1]
	\REQUIRE{  
		A nonnormal matrix $A \in \C^{n \times n}$ with $\alpha(A) < 0$,
		$x_0 > 0$ and $y_0 \in \R$ such that $g(x_0,y_0) < 1$,
		and a tolerance $\gamma_\mathrm{tol} > 0$.
		}
	\ENSURE{ 
		$g_k^{-1} \approx \Kcon$ (continuous-time).
		\\ \quad
	}
	
	\WHILE { true } 
		\STATE $(\hat x, \hat y) \gets $ computed local/global minimizer of \eqref{eq:kinv_cont} initialized from $(x_0,y_0)$
		\STATE $g_k \gets g(\hat x, \hat y)$
		\STATE $\gamma \gets g_k (1 - 0.5\cdot \gamma_\mathrm{tol})$
		\STATE $\eta \gets g_k \cdot \gamma_\mathrm{tol}$
	
		\STATE Perform the 2D level-set test of \S\ref{sec:2d_test_cv_vari} with current $\gamma$ and $\eta$
		\IF { test finds \emph{any} level-set points  }
			\STATE $(x_0,y_0) \gets$ one of these points
				\COMMENT{Goto line 2 to restart optimization.}
		\ELSE
			\RETURN \COMMENT{ $\Kinv > g_k \cdot (1 - \gamma_\mathrm{tol})$ holds.}
		\ENDIF
	\ENDWHILE
\end{algorithmic}
\end{algorithm}
\end{algfloat}

\begin{algfloat}[t]
\begin{algorithm}[H]
\floatname{algorithm}{Algorithm}
\caption{Trisection}
\label{alg:tri}
\begin{algorithmic}[1]
	\REQUIRE{  
		A nonnormal matrix $A \in \C^{n \times n}$ with $\alpha(A) < 0$
		and a tolerance $\gamma_\mathrm{tol} > 0$.
		}
	\ENSURE{ 
		$\texttt{ub}^{-1} \approx \Kcon$ (continuous-time).
		\\ \quad
	}
	
	\STATE $\texttt{lb} \gets 0$
	\STATE $\texttt{ub} \gets g(x_0,y_0)$ for some $x_0 > 0$ and $y_0 \in \R$
	\WHILE { $(\texttt{ub} - \texttt{lb}) > \texttt{ub} \cdot \gamma_\mathrm{tol}$ } 
		\STATE $\texttt{diff} \gets \texttt{ub} - \texttt{lb}$
		\STATE $\eta \gets \tfrac{2}{3} \cdot \texttt{diff}$
		\STATE $\gamma \gets \texttt{lb} + \eta$
		\STATE Perform the 2D level-set test of \S\ref{sec:2d_test_cv_vari} with current $\gamma$ and $\eta$
		\IF { test finds \emph{any} level-set points } 
			\STATE $\texttt{ub} = \gamma$
		\ELSE
			\STATE $\texttt{lb} = \texttt{lb} + \tfrac{1}{3} \cdot \texttt{diff}$	
		\ENDIF
	\ENDWHILE
\end{algorithmic}
\end{algorithm}
\algnote{
While \cite[p.~358]{BurLO04} states that their trisection-based $\dtu$ algorithm 
converges ``to any prescribed absolute accuracy",
any desired relative accuracy can be obtained by simply choosing a stopping condition like
the one we have used here in line 3. 
}
\end{algfloat}

\subsection{A 2D level-set test for variable-distance pairs}
\label{sec:2d_test_cv_vari}
Suppose $\gamma$ is both a singular value of $G(x,y)$ and $G(x,y+x\eta)$, with respective left and right singular vectors pairs $u$,$v$ and $\hat u$,$\hat v$.
Via \cref{thm:1d_vert} for $G(x,y)$ and a similar argument as its proof for $G(x,y + x\eta)$,  
we have the following two Hamiltonian eigenvalue problems
\bseq
\label{eq:eig_pair_cv_vari}
\begin{align}
	\begin{bmatrix}
		A - xI 		& \gamma xI 	\\
		-\gamma xI  	& xI -A^*  		\\
	\end{bmatrix}
	\begin{bmatrix} v \\ u \end{bmatrix}
	&= \imagunit y \begin{bmatrix} v \\ u \end{bmatrix}, \\
	\begin{bmatrix}
		A - x(1+\imagunit \eta)I 	& \gamma x I 			 \\
		-\gamma x I 			& x(1-\imagunit\eta)I - A^*  \\
	\end{bmatrix}
	\begin{bmatrix} \hat v \\ \hat u \end{bmatrix}
	&= \imagunit y \begin{bmatrix} \hat v \\ \hat u \end{bmatrix}.
\end{align}
\eseq
These two Hamiltonian matrices have an eigenvalue in common if
\beq
	\label{eq:sylv_cv_vari}
	\begin{bmatrix}
		A - xI 		& \gamma xI 	\\
		-\gamma xI  	& xI -A^*  		\\
	\end{bmatrix}
	W + W
	\begin{bmatrix}
		A^* - x(1 - \imagunit \eta)I 	& -\gamma x I \\
		\gamma x I 			& x(1 + \imagunit\eta)I - A \\
	\end{bmatrix}
	= 0
\eeq
has a nonzero solution $W \in \C^{2\n\times2\n}$.
Separating out the terms involving $x$, we have
\begin{multline}
	\label{eq:sylv_cv_vari_x}
	\left(
	\begin{bmatrix} 
		A 	& 0 		\\ 
		0 	& -A^* 	\\
	\end{bmatrix}
	W + W
	\begin{bmatrix}
	 	A^*	& 0 	\\
		0 	& -A 	\\
	\end{bmatrix} 
	\right) \\
	- x 
	\left( 
	\begin{bmatrix} 
		I 		& -\gamma I 	\\ 
		\gamma I  & -I 			\\ 	
	\end{bmatrix} 
	W + W
	\begin{bmatrix} 
		(1 - \imagunit \eta) I 		& \gamma I \\ 
		-\gamma I  			& -(1 + \imagunit \eta)I 
	\end{bmatrix} 
	\right) = 0.
\end{multline}
Rewriting both Sylvester forms using the vectorize operator, 
and letting $w = \vecop(W)$, we have
the following generalized eigenvalue problem
\begin{align}
	\label{eq:large_eig_cv_vari}
	\mathcal{B}_1 w &= x \mathcal{B}_2 w, 
	\quad \text{where} \\
	\nonumber
	\mathcal{B}_1 &=  
	I_{2n} \otimes 
		\begin{bmatrix} 
		A 	& 0 		\\ 
		0 	& -A^*
	\end{bmatrix}
	+
	\begin{bmatrix} 
		\,\overline{A} 	& 0 \\
		0 			& -A\tp
	\end{bmatrix}
	\otimes I_{2n}, \\
	\nonumber
	\mathcal{B}_2 &= 
	I_{2n} \otimes 
	\begin{bmatrix} 
		I 			& -\gamma I \\ 
		\gamma I  	& -I 
	\end{bmatrix} 
	+
	\begin{bmatrix} 
		(1 - \imagunit \eta) I 	& -\gamma I \\ 
		\gamma I  		& -(1 + \imagunit \eta)I 
	\end{bmatrix} 
	\otimes I_{2n}.
\end{align}

To check if \eqref{eq:kinv_cv_vari_lb} holds with this variable-distance certificate,
we proceed similarly to our fixed-distance certificate from \S\ref{sec:2d_test_fixed}.
Thus, after initially computing all the positive real eigenvalues of \eqref{eq:large_eig_cv_vari},
we then apply \cref{thm:1d_vert} to each of these candidate vertical lines to see
if we detect any points on a $\gamma$-level set (or lower) of $g(x,y)$.

\begin{keyremark}
\label{rem:improved_test}
We now explain how the designs of our 2D level-set tests intentionally deviate
from the $\dtu$ procedure Gu proposed  in \cite[pp.~995--997]{Gu00} 
and how this results in much better reliability.
If one were to more closely follow Gu's procedure as it is written, for each positive real eigenvalue $\hat x > 0$ 
of \eqref{eq:large_eig_cv_vari}, one would instead compute the eigenvalues of
the two matrices in \eqref{eq:eig_pair_cv_vari} and then check whether these two spectra
have any imaginary eigenvalues in common.
If a shared imaginary eigenvalue is detected, then a pair of level-set points a distance $x\eta$ apart 
has been detected\footnote{Note that while  
the bottom of \cite[p.~996]{Gu00} says that detected points
would be on the $\gamma$-level set, 
technically that only holds if $\gamma$ is also the minimum singular value at both of these points, which may or may not be true.}
and $\gamma \geq \Kinv$ must hold.
Otherwise, if none of the pairs of eigenvalue problems from \eqref{eq:eig_pair_cv_vari} share imaginary eigenvalues,
then following Gu would mean asserting that lower bound \eqref{eq:kinv_cv_vari_lb} must hold.
While Gu's procedure is sound in exact arithmetic, 
trying to assert whether two matrices share an (imaginary) eigenvalue
is exceptionally difficult to do reliably in the presence of rounding errors.
Furthermore, this matching also assumes that the real eigenvalues $\hat x$ have been computed accurately enough
such that the matrices \eqref{eq:eig_pair_cv_vari} would indeed share an 
eigenvalue $\imagunit \hat y$, assuming $(\hat x,\hat y)$ satisfies \eqref{eq:kinv_cv_vari_ub}, 
which is another source of numerical uncertainty; see \cite[Section~5]{GuMOetal06}.
Suppose that there are indeed points a distance $x \eta$ apart vertically on the $\gamma$-level set of $g(x,y)$,
but rounding errors prevent their detection. 
In this case, asserting that \eqref{eq:kinv_cv_vari_lb} holds may be erroneous.
This can be a critical failure because the lower bound is only true \emph{if no such pair of level-set points exist}, 
not if the procedure fails to detect them due to numerical problems!
This is a major reason why Gu's procedure can have such numerical difficulties, particularly when $\eta$ is small.
Though \cite{GuMOetal06} improves Gu's procedure to make it faster, it too follows the same the idea of 
checking whether or not two matrices share imaginary eigenvalues and thus also inherits these numerical problems.
However, in the course of our work here, we have realized that 
Gu's $\dtu$ procedure ironically follows \cref{thm:dtu,cor:dtu} too closely,
because as it turns outs, checking whether or not level-set points 
are a distance $x\eta$ (or $\eta$) apart is \emph{entirely unnecessary}.
While indeed \eqref{eq:kinv_cv_vari_lb} must hold if no points satisfy \eqref{eq:kinv_cv_vari_ub},
another sufficient condition for \eqref{eq:kinv_cv_vari_lb} to hold is that 
the procedure itself does not generate \emph{any} level-set points whatsoever, on the $\gamma$-level set or lower and/or as pairs or single points.
If any level-set points are detected, clearly $\gamma \geq \Kinv$ holds, while none being generated
implies \eqref{eq:kinv_cv_vari_ub} cannot hold, which in turn asserts \eqref{eq:kinv_cv_vari_lb} must hold.
Note that our new way of performing these 2D level-set tests is not only a theoretical improvement; 
we initially designed our tests to check for common imaginary eigenvalues, like Gu's procedure,
but discovered that the aforementioned numerical issues were impeding the reliability of the codes.
Applying our modifications to the $\dtu$ algorithms of \cite{Gu00,BurLO04,GuMOetal06}
should improve their reliability as well.
\end{keyremark}

\subsection{Properties of the eigensystem $\mathcal{B}_1 w = x \mathcal{B}_2w$ and its solution}
Unlike \eqref{eq:large_eig_ctheta}, which can at least be numerically deflated
to an order $2\n^2$ generalized eigenvalue problem,
we now show that \eqref{eq:large_eig_cv_vari} cannot be similarly reduced, as 
$\mathcal{B}_2$ is generally nonsingular.  In fact, $\mathcal{B}_2$ has an explicit inverse.

\begin{theorem}
\label{thm:kronsuminv}
Let $C \coloneqq \begin{bsmallmatrix} aI & -bI \\ bI & -aI\end{bsmallmatrix} \in \C^{2n \times 2n}$ with $a,b \in \C$ and $b \neq 0$, and let $\mathcal{M} \coloneqq I_{2k} \otimes C + C \otimes I_{2k}$.
For $k \in \N$ and $s \in \C$, 
define matrix $\mathcal{D} \coloneqq s I_{4kn} + \mathcal{M} $
and scalar \mbox{$\beta \coloneqq s^2  + 4(b^2 - a^2)$}.
Then $\mathcal{D}$ is invertible with inverse 
\beq
	\label{eq:Dinv}
	\mathcal{D}^{-1} = 
	s^{-1} I_{4kn} - \beta^{-1} \mathcal{U}_k 
	\left(
	I_{2kn} - 2s^{-1} I_k \otimes C
	\right)
	\mathcal{V}_k
\eeq
if and only if $s$ and $\beta$ are both nonzero,
where $\mathcal{U}_k$ and $\mathcal{V}_k$ are defined in \eqref{eq:UkVk}.
Moreover, if $k=n$, then the following simpler formula holds
\beq
	\label{eq:Dinv_simple}
	\mathcal{D}^{-1} = 
	s^{-1} I_{4n^2} - \beta^{-1} \mathcal{M} + (s\beta)^{-1}\mathcal{M}^2.
\eeq
\end{theorem}
\begin{proof}
Via \cref{lem:UkVk} applied to $\mathcal{M}$, we have that $\mathcal{D} = sI_{4kn} + \mathcal{U}_k\mathcal{V}_k$.
As the eigenvalues of $C$ are $\pm\sqrt{a^2 - b^2}$, by \cref{thm:kronsum},
the eigenvalues of $\mathcal{M}$ are $\pm2\sqrt{a^2 - b^2}$ and zero.
Thus, $\mathcal{D}$ is invertible if and only if $-s$ is not equal to any of these eigenvalues,
which is equivalent to $s \neq 0$ and $\beta \neq 0$.
For \eqref{eq:Dinv},
we apply the Sherman-Morrison-Woodbury formula to 
$(sI_{4kn} + \mathcal{U}_k\mathcal{V}_k)^{-1}$
and use $\mathcal{V}_k\mathcal{U}_k = 2I_k \otimes C$ from \cref{lem:UkVk}, yielding
\begin{align*}
	\mathcal{D}^{-1} 
	&= s^{-1} I_{4kn}  - s^{-1} \mathcal{U}_k \left(I_{2kn} + s^{-1} \mathcal{V}_k\mathcal{U}_k \right)^{-1} s^{-1} \mathcal{V}_k, \\
	&= s^{-1} I_{4kn}  - s^{-1} \mathcal{U}_k \left(sI_{2kn} +  2I_k \otimes C \right)^{-1} \mathcal{V}_k, \\
	&= s^{-1} I_{4kn}  - s^{-1} \mathcal{U}_k \left(I_k \otimes (sI_{2n} +  2C) \right)^{-1} \mathcal{V}_k, \\
	&= s^{-1} I_{4kn}  - s^{-1} \mathcal{U}_k \left(I_k \otimes (sI_{2n} +  2C)^{-1} \right) \mathcal{V}_k, \\
	&= s^{-1} I_{4kn}  - s^{-1} \mathcal{U}_k \left(
		I_k \otimes \tfrac{1}{s^2 + 4(b^2 - a^2)}\begin{bsmallmatrix} (s - 2a)I & 2bI \\ -2bI & (s+2a)I \end{bsmallmatrix}
		\right) \mathcal{V}_k,	 \\
	&= s^{-1} I_{4kn}  - s^{-1}\beta^{-1} \mathcal{U}_k \left(
		s I_{2kn} - 2 I_k \otimes C \right) \mathcal{V}_k.	 		
\end{align*}
For \eqref{eq:Dinv_simple}, now with $k=n$, first note that $C^2 = \phi I_{2n}$ with $\phi = (a^2-b^2)$, while
\begin{align*}
	\mathcal{M}^2 
	&= \left(I_{2n} \otimes C^2 + C^2 \otimes I_{2n}\right) + \left(C \otimes C + C \otimes C\right) 
	= 2 \left(\phi I_{4n^2} + C\otimes C\right), \\
	\mathcal{M}^3 
	&= 2\left(\phi I_{4n^2} + C\otimes C\right) \mathcal{M}
	= 2\phi \mathcal{M} + 2 \left(C\otimes C^2 + C^2 \otimes C\right) = 4\phi \mathcal{M},
\end{align*}
where we have used the mixed-product property of $\otimes$.
Then by multiplying \eqref{eq:Dinv_simple} with $sI_{4n^2} + \mathcal{M}$ and noting that
$\beta - s^2 + 4\phi = 0$, it follows that
\begin{align*}
	I_{4n^2} + s^{-1}\mathcal{M} - s\beta^{-1} \mathcal{M} + \left(s\beta\right)^{-1}\mathcal{M}^3
	&= I_{4n^2} + \left(s^{-1} - s\beta^{-1} + 4\phi \left(s\beta\right)^{-1}\right) \mathcal{M} \\
	&= I_{4n^2} + \left(s\beta\right)^{-1} \left(\beta - s^2 + 4\phi \right) \mathcal{M} = I_{4n^2}.
\end{align*}
\end{proof}

Applying \cref{thm:kronsuminv} to $\mathcal{B}_2$, with
$a \coloneqq 1$, $b \coloneqq \gamma \neq 0$, $s \coloneqq -\imagunit \eta \neq 0$,
we see that $\beta \neq 0$ holds if $\eta \neq \pm 2 \sqrt{\gamma^ 2 - 1}$ $(\not\in \R$ if $|\gamma| < 1$),
hence $\mathcal{B}_2$ is generically invertible with 
\beq
	\label{eq:B2_inv}
	\mathcal{B}_2^{-1} = -(\imagunit \eta)^{-1} I_{4n^2} - \beta^{-1} \mathcal{B}_2 - (\imagunit \eta \beta)^{-1}\mathcal{B}^2,
\eeq
where $\beta = 4(\gamma^2 - 1) - \eta^2$.
On the upside, one could thus perform our variable-distance certificate
by computing the eigenvalues of $\mathcal{B}_2^{-1}\mathcal{B}_1$,
while our fixed-distance certificate requires solving 
the general eigenvalue problem \eqref{eq:large_eig_ctheta}.
However, using $\mathcal{B}_2^{-1}\mathcal{B}_1$ can introduce numerical issues
and the order of this eigenvalue problem is $4\n^2$,
whereas for the fixed-distance case, we could instead solve the numerically deflated order $2\n^2$ problem 
discussed in \S\ref{sec:2d_test_fixed_props}.
Furthermore, from \eqref{eq:B2_inv}, it is clear that $\|\mathcal{B}_2^{-1}\| \to \infty$ as $\eta \to 0$; in the presence of rounding errors, this may make it quite difficult to reliably 
ascertain which eigenvalues of $\mathcal{B}_2^{-1}\mathcal{B}_1$ should be considered 
real-valued.

At this point, one might ask if it would have been better to consider
a variable-distance certificate using a horizontal orientation instead of a vertical one.
We consider this in \cref{apdx:2d_test_ch_vari} and note here that it results in 
the matrix pencil $\mathcal{B}_1 - \lambda \mathcal{\widetilde B}_2$, where,
like $\mathcal{B}_2$, the matrix $\mathcal{\widetilde B}_2$ is nonsingular but becomes singular as $\eta\to0$. 

\subsection{Adapting divide-and-conquer for $\mathcal{B}_1w = x \mathcal{B}_2w$}
\label{sec:fast_eig_cont_vari}
We now describe the computations 
needed for a divide-and-conquer version of our variable-distance certificate.
Note that the numerical reliability of this approach might be different than our fixed-distance
certificate, as \eqref{eq:large_eig_cv_vari} only has finite eigenvalues, while 
\eqref{eq:large_eig_ctheta} has finite and infinite eigenvalues.
For $w \in \C^{4\n^2}$, $\mathcal{B}_2 w$ can be efficiently obtained
by storing $\mathcal{B}_2$ in a sparse format.
Computing $\mathcal{B}_1 w$
is $\bigO(\n^3)$ work, which is done by vectorizing the first matrix in \eqref{eq:sylv_cv_vari_x},
i.e.,
$
	\vecop
	\left(
	\begin{bsmallmatrix} 
		A 	& 0 \\ 
		0 	& -A^*
	\end{bsmallmatrix}
	W + W
	\begin{bsmallmatrix} 
		A^* 	& 0 \\ 
		0 	& -A
	\end{bsmallmatrix}
	\right)
$, 
where $W \in \C^{2\n \times 2\n}$ and $w = \vecop(W)$.
To obtain $w = (\mathcal{B}_1 - \shift \mathcal{B}_2)^{-1}y$ for $y \in \C^{4\n^2}$, 
consider $(\mathcal{B}_1 - \shift \mathcal{B}_2) w = y$.
This  ``unvectorizes" into \eqref{eq:sylv_cv_vari} provided that $x$ is replaced by $s$
and the zero on its right-hand side is replaced by $Y$, where $y = \vecop(Y)$.
Vectorizing the solution of the resulting Sylvester equation yields 
$w = (\mathcal{B}_1 - \shift \mathcal{B}_2)^{-1}y$ in $\bigO(\n^3)$ work.

\section{Algorithms for discrete-time $\Kcon$}
\label{sec:disc}
To adapt \cref{alg:owr_bt,alg:owr,alg:tri} to compute discrete-time Kreiss constants,
we need to develop discrete-time versions of the 2D level-set tests from \S\ref{sec:2d_test_fixed}
and \S\ref{sec:2d_test_cv_vari};
the other necessary components for the two optimization-with-restarts algorithms have already been 
discussed in \S\ref{sec:opt_disc}.
We begin by developing discrete-time analogues of \cref{thm:kinv_cv,cor:kinv_cv}.
For the discrete-time case, we now additionally assume that $0 \not\in \Lambda(A)$.

\begin{theorem}
\label{thm:kinv_dr}
For $A \in \C^{\n \times \n}$ with $\rho(A) < 1$,
let $\gamma \in [0,1)$, $\eta \geq 0$, 
and $(r_\star,\theta_\star)$ be a global minimizer of \eqref{eq:kinv_disc}.
If $\Kinv \leq \gamma$ and $\eta \in \big(0,\tfrac{2(r_\star - 1)}{1 + \gamma}(\gamma - \Kinv)\big]$, then 
there exists an $r > 1$ and $\theta \in [0,2\pi)$ such that
\beq
	\label{eq:kinv_dr_ub}
	h(r,\theta) = h(r + \eta,\theta) = \gamma.
\eeq
\end{theorem}

\begin{corollary}
\label{cor:kinv_dr}
For $A \in \C^{\n \times \n}$ with $\rho(A) < 1$, let $\gamma \in [0,1)$, $\eta \geq 0$,
and $(r_\star,\theta_\star)$ be a global minimizer of \eqref{eq:kinv_disc}.
If there do not exist any pairs $r,\theta \in \R$ with $r > 1$ such that \eqref{eq:kinv_dr_ub} holds,
then 
\beq
	\label{eq:kinv_dr_lb}
	\Kinv > \gamma - \tfrac{\eta(1 + \gamma)}{2(r_\star - 1)}.
\eeq
\end{corollary}

\begin{proof}[Proof of \cref{thm:kinv_dr}]
If $\gamma=\Kinv$, the proof is trivially satisfied with $\eta = 0$, so assume that $\gamma \in (\Kinv,1)$.
Since $\rho(A) < 1$, it follows that $\lim_{r\to1^+} h(r,\theta) = \infty$ for any $\theta \in \R$,
and so $h(r_\star,\theta_\star) = \Kinv$ with $r_\star > 1$.
Now consider the strict lower level set $\mathcal{L}_\gamma \coloneqq \{ (r,\theta) : h(r,\theta) < \gamma, r > 1\}$,
which is clearly open and also bounded; see \cite[Theorem~3.2]{Mit19a}.
Let $\mathcal{L}$ be the (open) connected component of $\mathcal{L}_\gamma$ such that
$(r_\star,\theta_\star) \in \mathcal{L}$ and let $\mathcal{G} \coloneqq \mathcal{L}^\mathrm{H}$, i.e., the 
simply connected hull of $\mathcal{L}$.

By continuity of $h(r,\theta)$ and the boundedness of $\mathcal{G}$,
there must exist $b_1,b_2 \in \bd \mathcal{G}$
\[
	b_1 = (r_\star - \eta_1, \theta_\star) 
	\qquad \text{and} \qquad
 	b_2 = (r_\star + \eta_2, \theta_\star)
\]
such that 
\beq
	\label{eq:shifted_disc}
	h(r_\star  - \eta_1, \theta_\star) = h(r_\star + \eta_2, \theta_\star) = \gamma,
\eeq
where $\eta_1,\eta_2 > 0$.
Furthermore, we can assume that $\eta_1$ and $\eta_2$ are the smallest positive values 
such that \eqref{eq:shifted_disc} holds with $b_1,b_2 \in \bd \mathcal{G}$.
Defining
\beq
	\label{eq:h_numerator}
	\tilde h(r) \coloneqq
	h(r + 1,\theta) = \frac{\smin\left((r+1)\eit I - A\right)}{r},
\eeq
$\tilde h(r)$ has a global minimizer $r_\star - 1$ on domain $(0,\infty)$ and its numerator has a GLC~of~1.
Applying \cref{lem:foverx} to $q(r) \coloneqq \tilde h(r)$ with $a = r_\star - \eta_1 - 1$ and $b = r_\star +\eta_2 - 1$ yields
\[
	\eta_1 + \eta_2 \geq \tfrac{2(r_\star - 1)}{1 + \gamma}(\gamma - \Kinv).
\]

Now suppose that $\eta \in \big(0,\tfrac{2(r_\star - 1)}{1 + \gamma}(\gamma - \Kinv)\big]$ so $\eta \leq \eta_1 + \eta_2$.
If $\eta = \eta_1 + \eta_2$, \eqref{eq:kinv_dr_ub} is clearly satisfied, 
so instead assume that $\eta < \eta_1 + \eta_2$.  Considering the set
$\mathcal{\widehat G} \coloneqq \{ (r - \eta,\theta) : (r,\theta) \in \mathcal{G}\}$,
without loss of generality, we can assume $\theta_\star = 0$, and so $\mathcal{\widehat G}$
is simply $\mathcal{G}$ shifted left by the amount $\eta$.
Thus, as in the end of the proof of \cref{thm:kinv_ch},
the rest of the argument follows analogously to the last paragraph of the proof of \cref{thm:kinv_cv}.
\end{proof}

\subsection{A 1D circular level-set test}
For all of our discrete-time $\Kcon$ algorithms, we will also need the following theorem.
\begin{theorem}
\label{thm:1d_circle}
Given $\gamma, r,\theta \in \R$, with $\gamma \geq 0$ and $r \neq 1$,
$\gamma$ is a singular value of $H(r,\theta)$ defined in \eqref{eq:h_disc} if and only if $\eit$ is an eigenvalue of
the symplectic matrix pencil
\beq
	\label{eq:symp_pen}
	\begin{bmatrix} 
		A    	& \gamma(r-1)I \\
		0 	& rI
	\end{bmatrix}
	- \lambda
	\begin{bmatrix} 
		rI    			& 0 	\\
		\gamma(r-1)I 	& A^*
	\end{bmatrix}.
\eeq
Furthermore, if $A$ is invertible and $r \neq 0$, zero is not an eigenvalue of \eqref{eq:symp_pen}
and the matrix pencil is regular.
\end{theorem}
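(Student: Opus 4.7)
The plan is to mirror the strategy used in \cref{thm:1d_vert} for the Hamiltonian pencil, but arranged so that $\eit$ (rather than $\imagunit y$) appears as the generalized eigenvalue. The natural starting point is the pair of SVD identities $H(r,\theta) v = \gamma u$ and $H(r,\theta)^* u = \gamma v$, which, after multiplying through by $r-1$ (legitimate since $r\ne 1$), give
\[
	(r\eit I - A) v = \gamma(r-1) u \quad \text{and} \quad (r\emit I - A^*) u = \gamma(r-1) v.
\]
For the forward direction I would rearrange the first identity as $Av + \gamma(r-1) u = \eit \cdot rv$, and multiply the second by $\eit$ and regroup to obtain $ru = \eit\bigl(\gamma(r-1) v + A^* u\bigr)$; stacking these two block rows produces exactly $M_1 [v;u] = \eit M_2 [v;u]$, where $M_1$ and $M_2$ are the two matrices in \eqref{eq:symp_pen}.

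For the converse I would reverse these manipulations from an eigenvector $[v;u]$ of \eqref{eq:symp_pen} at eigenvalue $\eit$, recovering $(r\eit I - A)v = \gamma(r-1)u$ together with $(r\eit I - A)^* u = \gamma(r-1)v$, which give the singular value identities for $H(r,\theta)$ once $u$ and $v$ are confirmed to be nonzero. This degeneracy check is the one subtle step: for $\gamma>0$ it is automatic since, e.g., $v=0$ would collapse the first block row to $\gamma(r-1) u = 0$, forcing $u=0$ and contradicting $[v;u]\ne 0$, with the symmetric argument ruling out $u=0$; the edge case $\gamma=0$ requires a separate but short remark that the block rows decouple and the pencil condition reduces to $r\eit \in \Lambda(A)$, which is exactly the condition for $0$ to be a singular value of $H(r,\theta)$.

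The symplectic structure is a mechanical block-matrix computation: I would multiply out $M_1^* J M_1$ and $M_2^* J M_2$ directly and observe that the $(2,2)$ and $(1,1)$ entries respectively contain canceling $\pm \gamma r(r-1) I$ cross-terms, leaving both products equal to $\begin{bsmallmatrix} 0 & rA^* \\ -rA & 0 \end{bsmallmatrix}$. Finally, the regularity and nonzero-eigenvalue assertion reduces to a single observation: since $M_1$ is block upper triangular, $\det M_1 = r^n \det A$, which is nonzero whenever $A$ is invertible and $r \ne 0$. Hence $M_1$ is invertible, which simultaneously shows that $\lambda=0$ cannot solve $M_1 x = \lambda M_2 x$ with $x \ne 0$ and that $\det(M_1 - \lambda M_2)$ is not identically zero in $\lambda$, so the pencil is regular. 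The main obstacle in the whole argument is the converse's degeneracy check for small $\gamma$; everything else is routine arrangement of the SVD identities and block-matrix bookkeeping.
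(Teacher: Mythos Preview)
Your proposal is correct and follows essentially the same route as the paper: unwind the SVD identities $Hv=\gamma u$, $H^*u=\gamma v$, clear the denominator $r-1$, and regroup so that $\eit$ appears as the generalized eigenvalue of the displayed pencil; the paper compresses this into a single chain of if-and-only-if equivalences, while you spell out the two directions separately and are more explicit about the symplectic and regularity verifications that the paper simply asserts. One minor remark: your degeneracy check in the converse is more than you need, since the relations $Hv=\gamma u$ and $H^*u=\gamma v$ with $[v;u]\ne 0$ already say that $\gamma$ is an eigenvalue of the Hermitian block matrix $\begin{bsmallmatrix}0 & H\\ H^* & 0\end{bsmallmatrix}$, hence a singular value of $H$, without requiring $u$ and $v$ to be individually nonzero.
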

\begin{proof}
It is easy to verify \eqref{eq:symp_pen} is symplectic, and under the additional assumptions,
also regular and that zero cannot be an eigenvalue.
Now suppose $\gamma$ is a singular value  $H(r,\theta)$ with left and right singular vectors $u$ and $v$,
and so
\[
\thinmuskip=2.4mu
\medmuskip=3.4mu
\thickmuskip=4.4mu
	\gamma 
	\begin{bmatrix} u \\ v \end{bmatrix}
	=
	\begin{bmatrix} 
		H(r,\theta) & 0 	\\
		0 		& H(r,\theta)^* 
	\end{bmatrix}
	\begin{bmatrix} v \\ u \end{bmatrix}
	\ \Leftrightarrow \
	\gamma (r-1)
	\begin{bmatrix} u \\ -\eit v \end{bmatrix}
	=
	\begin{bmatrix} 
		r\eit I - A 	& 0\\
		0				& \eit A^* - r I
	\end{bmatrix}
	\begin{bmatrix} v \\ u \end{bmatrix}.	
\]
Rearranging terms, this is equivalent to
\[
	\gamma (r-1)
	\begin{bmatrix} u \\ 0 \end{bmatrix}
	+
	\begin{bmatrix} 
		A 	& 0\\
		0	& r I
	\end{bmatrix}
	\begin{bmatrix} v \\ u \end{bmatrix}
	=
	\eit \gamma(r-1)
	\begin{bmatrix} 0 \\  v \end{bmatrix}
	+
	\eit
	\begin{bmatrix} 
		r I 	& 0\\
		0	& A^* 
	\end{bmatrix}
	\begin{bmatrix} v \\ u \end{bmatrix}.	
\]
\end{proof}

Similar to \cref{thm:1d_vert}, note that the unimodular eigenvalues of \eqref{eq:symp_pen}
correspond to points that are either on the $\gamma$-level set of $h(r,\theta)$ or on \emph{lower} level sets.
The structure-preserving eigensolvers of \cite{BenMX98a,BenMX98b} 
can also be used for more reliable detection of unimodular eigenvalues of symplectic pencils
like \eqref{eq:symp_pen}.

\subsection{Adapting \cref{alg:owr_bt} for discrete-time $\Kcon$}
\label{sec:certify_disc_fixed}
To create our first discrete-time 2D level-set test, 
we will again look for pairs of points a \emph{fixed distance} $\eta \geq 0$ apart,
but now we will do this along rays from the origin, i.e., 
for $\hat \theta \in [0,2\pi)$, we check if $h(\hat r,\hat \theta) = h(\hat r+\eta, \hat \theta) = \gamma$
holds for some $\hat r > 1$.
Suppose $\gamma$ is a singular value of both $H(r,\theta)$ and $H(r+\eta,\theta)$ 
with respective left and right singular vector pairs $u,v$ and $\hat u,\hat v$.
Then by \cref{thm:1d_circle}, we have that 
\bseq
	\label{eq:eig_pair_radial}
	\begin{align}
	\label{eq:eig_pair_radial1}
	\begin{bmatrix} 
		A    	& \gamma(r-1)I \\
		0 	& rI
	\end{bmatrix}
	\begin{bmatrix} v \\ u \end{bmatrix}
	&= \eit 
	\begin{bmatrix} 
		rI    			& 0 	\\
		\gamma(r-1)I 	& A^*
	\end{bmatrix}
	\begin{bmatrix} v \\ u \end{bmatrix}, 
	\\
	\label{eq:eig_pair_radial2}
	\begin{bmatrix} 
		A    	& \gamma(r+\eta-1)I \\
		0 	& (r+\eta)I
	\end{bmatrix}
	\begin{bmatrix} \hat v \\ \hat u \end{bmatrix}
	&= \eit 
	\begin{bmatrix} 
		(r+\eta)I    			& 0 	\\
		\gamma(r+\eta-1)I 	& A^*
	\end{bmatrix}
	\begin{bmatrix} \hat v \\ \hat u \end{bmatrix},
\end{align}
\eseq
which respectively we denote as $M - \lambda N$ and $\widetilde M - \lambda \widetilde N$.
Now define $W = \begin{bsmallmatrix} v \\ u \end{bsmallmatrix} \begin{bsmallmatrix} \hat v^* & \hat u^* \end{bsmallmatrix} \neq 0$.
Multiplying the two equations above from the right side, respectively by 
$\begin{bsmallmatrix} \hat v^* & \hat u^* \end{bsmallmatrix}\widetilde M^*$
and
$\begin{bsmallmatrix} v^* & u^* \end{bsmallmatrix}N^*$,
yields
\bseq
	\begin{align}
	\label{eq:radial_eig1}
	M W \widetilde M^* &= \eit N W \widetilde M^* \\
	\label{eq:radial_eig2_pre}
	\widetilde M W^* N^* &= \eit \widetilde N W^* N^*.
	\end{align}
\eseq
If we take the conjugate transpose of \eqref{eq:radial_eig2_pre} and then multiply it by $\eit$,
we obtain
\beq
\label{eq:radial_eig2}
	N W \widetilde N^* = \eit N W \widetilde M^*.
\eeq
Subtracting \eqref{eq:radial_eig2} from \eqref{eq:radial_eig1} yields
\beq
	\label{eq:sylv_dr}
	M W \widetilde M^* - N W \widetilde N^* = 0,
\eeq
so as $M - \lambda N$ and $\widetilde N^* - \lambda \widetilde M^*$  are both regular,
\cite[Theorem~1]{Chu87} states that these two pencils must share an eigenvalue
if $W \neq 0$ solves the equation above.
 Moreover if this shared eigenvalue is $\eit$, then $\emit$ is an eigenvalue of 
$\widetilde N - \lambda \widetilde M$, which in turn implies that 
$\eit$ is an eigenvalue of $\widetilde M - \lambda \widetilde N$.
Thus, \eqref{eq:sylv_dr} having a nonzero solution $W$ is a necessary
condition for the two pencils in \eqref{eq:eig_pair_radial} to have eigenvalue $\eit$ in common.

We now want to separate out the $r$ terms of $M W \widetilde M^*$ and $N W \widetilde N^*$.
First, we have:
\beq
	\label{eq:MNtildestar}
	\widetilde M^* =  
	\begin{bmatrix}
		A^*		& 0 			\\
		\gamma(r+\eta-1) I 	& (r+\eta)I 
	\end{bmatrix}
	\quad
	\text{and}
	\quad
	\widetilde N^* =  
	\begin{bmatrix}
		(r+\eta)I	& \gamma(r+\eta-1) I		\\
		0 		& A 
	\end{bmatrix}.
\eeq
Then $MW\widetilde M^*$ is
\beq
	\left( \begin{bmatrix}
		A 	& -\gamma I \\
		0	& 0
	\end{bmatrix}
	+ 
	r
	\begin{bmatrix}
		0 	& \gamma I \\
		0 	& I 		
	\end{bmatrix}
	\right)
	W
	\left(
	\begin{bmatrix}
		A^*				& 0		\\
		\gamma (\eta-1) I 	& \eta I 
	\end{bmatrix}
	+ 
	r
	\begin{bmatrix}
		0 		& 0	\\
		\gamma I 	& I
	\end{bmatrix}
	\right),
\eeq
which is equal to
\begin{multline}
	\begin{bsmallmatrix}
		A 	& -\gamma I \\
		0	& 0
	\end{bsmallmatrix}
	W 
	\begin{bsmallmatrix}
		A^*				& 0		\\
		\gamma (\eta-1) I 	& \eta I 
	\end{bsmallmatrix}
	+ \\
	r \left(
	\begin{bsmallmatrix}
		A 	& -\gamma I \\
		0	& 0
	\end{bsmallmatrix}
	W
	\begin{bsmallmatrix}
		0 		& 0	\\
		\gamma I 	& I
	\end{bsmallmatrix}
	+ 
	\begin{bsmallmatrix}
		0 	& \gamma I \\
		0 	& I 		
	\end{bsmallmatrix}
	W
	\begin{bsmallmatrix}
		A^*				& 0		\\
		\gamma (\eta-1) I 	& \eta I 
	\end{bsmallmatrix}	
	\right)
	+ 
	r^2	
	\begin{bsmallmatrix}
		0 	& \gamma I \\
		0 	& I 		
	\end{bsmallmatrix}
	W	
	\begin{bsmallmatrix}
		0 		& 0	\\
		\gamma I 	& I
	\end{bsmallmatrix}.
\end{multline}
Vectorizing the above equation, with $w = \vecop(W)$,  yields
\begin{multline}
	\begin{bsmallmatrix}
		\overline A 	& \gamma (\eta-1) I \\
		0			& \eta I				
	\end{bsmallmatrix}
	\otimes
	\begin{bsmallmatrix}
		A 		& -\gamma I	\\
		0		& 0
	\end{bsmallmatrix} 
	w
	+ \\
	r \left(
	\begin{bsmallmatrix}
		0 		& \gamma I \\
		0 		& I
	\end{bsmallmatrix}
	\otimes
	\begin{bsmallmatrix}
		A 		& -\gamma I \\
		0		& 0
	\end{bsmallmatrix}
	+
	\begin{bsmallmatrix}
		\overline A 	& \gamma (\eta-1) I \\
		0			& \eta I	
	\end{bsmallmatrix}
	\otimes
	\begin{bsmallmatrix}
		0 		& \gamma I \\
		0 		& I
	\end{bsmallmatrix}	
	\right)
	w
	+
	r^2	
	\begin{bsmallmatrix}
		0 		& \gamma I \\
		0 		& I
	\end{bsmallmatrix}
	\otimes
	\begin{bsmallmatrix}
		0 		& \gamma I \\
		0 		& I		
	\end{bsmallmatrix}
	w,
\end{multline}
which we will abbreviate as
\beq
	\mathcal{M}_0 w + r \mathcal{M}_1 w + r^2 \mathcal{M}_2 w.
\eeq
Likewise, $NW\widetilde N^*$ is
\beq
	\left(
	\begin{bmatrix}
		0 		& 0 \\
		-\gamma I & A^*		
	\end{bmatrix}
	+ 
	r
	\begin{bmatrix}
		I 		& 0\\
		\gamma I	& 0
	\end{bmatrix}
	\right)
	W
	\left(
	\begin{bmatrix}
		\eta I		& \gamma(\eta-1) I \\
		0 		& A 
	\end{bmatrix}
	+ 
	r
	\begin{bmatrix}
		I		& \gamma I  \\
		0 		& 0
	\end{bmatrix}
	\right),
\eeq
which is equal to
\begin{multline}
	\begin{bsmallmatrix}
		0 		& 0 \\
		-\gamma I & A^*	
	\end{bsmallmatrix}
	W
	\begin{bsmallmatrix}
		\eta I		& \gamma(\eta-1) I \\
		0 		& A 
	\end{bsmallmatrix}
	+ \\
	r
	\left(
	\begin{bsmallmatrix}
		0 		& 0 \\
		-\gamma I & A^*	
	\end{bsmallmatrix}
	W	
	\begin{bsmallmatrix}
		I		& \gamma I  \\
		0 		& 0
	\end{bsmallmatrix}
	+
	\begin{bsmallmatrix}
		I 		& 0\\
		\gamma I	& 0
	\end{bsmallmatrix}
	W
	\begin{bsmallmatrix}
		\eta I		& \gamma(\eta-1) I \\
		0 		& A 
	\end{bsmallmatrix}
	\right)
	+
	r^2
	\begin{bsmallmatrix}
		I 		& 0\\
		\gamma I	& 0
	\end{bsmallmatrix}
	W
	\begin{bsmallmatrix}
		I		& \gamma I  \\
		0 		& 0
	\end{bsmallmatrix}.
\end{multline}
Similarly vectorizing this gives
\begin{multline}
	\begin{bsmallmatrix}
		\eta I				& 0 \\
		\gamma(\eta-1) I 	& A\tp 
	\end{bsmallmatrix}
	\otimes
	\begin{bsmallmatrix}
		0 		& 0 \\
		-\gamma I & A^*	
	\end{bsmallmatrix}
	w
	+ \\
	r
	\left(
	\begin{bsmallmatrix}
		I		& 0 \\
		\gamma I 	& 0
	\end{bsmallmatrix}
	\otimes
	\begin{bsmallmatrix}
		0 		& 0 \\
		-\gamma I & A^*	
	\end{bsmallmatrix}
	+
	\begin{bsmallmatrix}
		\eta I				& 0 \\
		\gamma(\eta-1) I 	& A\tp 
	\end{bsmallmatrix}
	\otimes
	\begin{bsmallmatrix}
		I 		& 0\\
		\gamma I 	& 0 
	\end{bsmallmatrix}
	\right)
	w
	+
	r^2
	\begin{bsmallmatrix}
		I 		& 0\\
		\gamma I	& 0 
	\end{bsmallmatrix}
	\otimes
	\begin{bsmallmatrix}
		I 		& 0\\
		\gamma I	& 0
	\end{bsmallmatrix}
	w,
\end{multline}
which we will abbreviate as
\beq
	\mathcal{N}_0 w + r \mathcal{N}_1 w + r^2 \mathcal{N}_2 w.
\eeq
Therefore, we finally have the following quadratic eigenvalue problem:
\beq
	\left( \mathcal{M}_0 - \mathcal{N}_0 \right) w 
	+ r \left( \mathcal{M}_1 - \mathcal{N}_1 \right) w
	+ r^2 \left( \mathcal{M}_2 - \mathcal{N}_2 \right) w = 0,
\eeq
which we will abbreviate as:
\beq
	\label{eq:large_eig_dr}
 	\mathcal{Q}_0 w + r \mathcal{Q}_1 w + r^2 \mathcal{Q}_2 w = 0.
\eeq

Thus, to perform our fixed-distance discrete-time $\Kcon$ certificate 
for adapting \cref{alg:owr_bt}, we first compute all the real eigenvalues $r > 1$ of \eqref{eq:large_eig_dr},
which specify a set of concentric circles centered at the origin on which we may find level set points.
Then, for each of these candidate radii, we apply \cref{thm:1d_circle} to see
if we indeed detect any points on a $\gamma$-level set (or lower) of $h(r,\theta)$.
Like our continuous-time algorithms, this procedure is also $\bigO(\n^6)$ work (albeit with a larger constant term)
when using dense (quadratic) eigensolvers.

Note that for a generic quadratic eigenvalue problem of the form \eqref{eq:large_eig_dr},
if either $\mathcal{Q}_0$ or $\mathcal{Q}_2$ are nonsingular,
then the problem is well posed,
i.e., it has at least one solution and not infinitely many; 
see \cite[p.~283]{BaiDDetal00}.
While for our quadratic eigenvalue problem $\mathcal{Q}_2$ is singular (see \cref{lem:q2_sing} in \cref{apdx:lemmas}), we now show that $\mathcal{Q}_0$ is generically nonsingular, 
and thus \eqref{eq:large_eig_dr} is guaranteed to be well posed.

\begin{theorem}
\label{thm:Q0_nonsing}
Let $\gamma,\eta \in \R$ both be positive.
Then $\mathcal{Q}_0$ from \eqref{eq:large_eig_dr} is nonsingular if and only if
$A$ is nonsingular and $\gamma$ is not a singular value of $A$.
\end{theorem}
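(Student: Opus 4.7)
The plan is to characterize $\ker \mathcal{Q}_0$ by undoing the vectorizations that produced \eqref{eq:large_eig_disc} from \eqref{eq:disc_sylv_W}. Writing $M_0, \widetilde M_0, N_0, \widetilde N_0$ for the matrices $M, \widetilde M, N, \widetilde N$ evaluated at $r = 0$, and setting $w = \vecop(W)$ with $W \in \C^{2n \times 2n}$, we have $\mathcal{Q}_0 w = 0$ if and only if $M_0 W \widetilde M_0^* = N_0 W \widetilde N_0^*$. The key structural observation is that the bottom block row of $M_0$ is zero while the top block row of $N_0$ is zero, so the two products occupy complementary block rows and the equation forces \emph{both} sides to vanish separately.

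Partitioning $W = \begin{bsmallmatrix} W_1 & W_2 \\ W_3 & W_4 \end{bsmallmatrix}$ and expanding, two of the four resulting block equations carry an overall factor of $\eta$ and, using $\eta > 0$, collapse to $AW_2 = \gamma W_4$ and $A^* W_3 = \gamma W_1$. Substituting these into the other two equations simplifies them to $(AW_1 - \gamma W_3)A^* = 0$ and $(A^* W_4 - \gamma W_2)A = 0$. The resulting system decouples into a $(W_1, W_3)$-subsystem and an independent $(W_2, W_4)$-subsystem of the same shape.

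For the $(\Leftarrow)$ direction, if $A$ is nonsingular, the trailing $A^*$ and $A$ can be cancelled, yielding $AW_1 = \gamma W_3$ and $A^* W_4 = \gamma W_2$. Combined with the two $\eta$-derived relations, this produces the matrix eigenvalue equations $AA^* W_3 = \gamma^2 W_3$ and $A^* A W_2 = \gamma^2 W_2$. The hypothesis that $\gamma$ is not a singular value of $A$ means $\gamma^2 \notin \Lambda(AA^*) = \Lambda(A^*A)$, so each column of $W_3$ and $W_2$ must vanish; then $W_1$ and $W_4$ follow from the $\eta$-relations, so $\mathcal{Q}_0$ is nonsingular.

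For the $(\Rightarrow)$ direction, we exhibit an explicit nonzero null vector in each degenerate case. If $A$ is singular, pick nonzero $u \in \ker A^*$ and $v \in \ker A$ (which exist by $\dim \ker A^* = \dim \ker A > 0$ for square $A$) and set $W_3 = uv^*$, $W_1 = W_2 = W_4 = 0$; one checks $A^* W_3 = 0 = \gamma W_1$ and $W_3 A^* = u(Av)^* = 0$, so all four relations hold. If instead $\gamma$ is a singular value of $A$, choose unit singular vectors with $Av = \gamma u$ and $A^* u = \gamma v$ and verify that $W_1 = vv^*$, $W_3 = uv^*$, $W_2 = W_4 = 0$ satisfies the system. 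The main obstacle is the bookkeeping in the block expansion: one must carefully track signs and block positions so that the decoupling into the two Sylvester-like subsystems becomes visible, after which both directions are essentially immediate.
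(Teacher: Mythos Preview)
Your proof is correct and takes a genuinely different route from the paper's. The paper invokes Chu's theorem on generalized Sylvester equations \cite[Theorem~1]{Chu87}: it writes $\mathcal{Q}_0 w = 0$ as the generalized Sylvester equation \eqref{eq:gensyl_P0} and then argues about regularity and common eigenvalues of the two associated $2n \times 2n$ matrix pencils, checking specific values of $\lambda$ to establish regularity and showing that any shared eigenvalue would force $\gamma$ to be a singular value of $A$. You instead exploit the block structure directly: because the bottom block row of $M_0$ and the top block row of $N_0$ are zero, the two products in the Sylvester equation occupy complementary rows and must vanish separately, after which a short block expansion (using $\eta>0$) reduces everything to $AA^*W_3=\gamma^2 W_3$ and $A^*AW_2=\gamma^2 W_2$. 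Your argument is more elementary and self-contained --- it avoids the external pencil result entirely and makes the role of the singular-value condition transparent --- while the paper's pencil approach is more conceptual and parallels the framework already used elsewhere in \S\ref{sec:disc} (e.g., \cref{lem:q2_sing}). Both reach the same conclusion; yours is arguably the cleaner hands-on verification, while the paper's fits its surrounding toolkit.
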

\begin{proof}
For any vector $w \in \C^{4\n^2}$, suppose $\mathcal{Q}_0 w = 0$, i.e.,
\[
	\left(
	\begin{bmatrix}
		\, \overline A 	& \gamma (\eta-1) I \\
		0			& \eta I				
	\end{bmatrix}
	\otimes
	\begin{bmatrix}
		A 		& -\gamma I	\\
		0		& 0
	\end{bmatrix} 
	\right)
	w
	-
	\left(
	\begin{bmatrix}
		\eta I				& 0 \\
		\gamma(\eta-1) I 	& A\tp 
	\end{bmatrix}
	\otimes
	\begin{bmatrix}
		0 		& 0 \\
		-\gamma I & A^*	
	\end{bmatrix}
	\right)	
	w 
	= 0,
\]
which holds if and only if
\beq
	\label{eq:gensyl_Q0}
	\begin{bmatrix}
		A 	& -\gamma I \\
		0	& 0
	\end{bmatrix}
	W 
	\begin{bmatrix}
		A^*				& 0		\\
		\gamma (\eta-1) I 	& \eta I 
	\end{bmatrix}
	- 
	\begin{bmatrix}
		0 		& 0 \\
		-\gamma I & A^*	
	\end{bmatrix}
	W
	\begin{bmatrix}
		\eta I		& \gamma(\eta-1) I \\
		0 		& A 
	\end{bmatrix}
	= 0,
\eeq
where $w = \vecop(W)$.
By \cite[Theorem~1]{Chu87}, the generalized Sylvester equation above has a unique solution
if and only if the two matrix pencils
\[
	\begin{bmatrix}
		A 	& -\gamma I \\
		0	& 0
	\end{bmatrix}
	- \lambda 
	\begin{bmatrix}
		0 		& 0 \\
		-\gamma I & A^*	
	\end{bmatrix}
	\qquad \text{and} \qquad
	\begin{bmatrix}
		\eta I		& \gamma(\eta-1) I \\
		0 		& A 
	\end{bmatrix}
	- \lambda
	\begin{bmatrix}
		A^*				& 0		\\
		\gamma (\eta-1) I 	& \eta I 
	\end{bmatrix}
\]
are both regular and have no eigenvalues in common.
Clearly $W=0$ satisfies \eqref{eq:gensyl_Q0},
so as long as zero is the only solution, 
$\mathcal{Q}_0$ is nonsingular for the assumptions.
To prove the forward direction,
we thus show that these three conditions hold.

We begin with the first pencil, which is regular if for at least one value of $\lambda$, the resulting matrix is nonsingular.
Using $\lambda = -1$ results in 
$	
	\big[
	\begin{smallmatrix}
		A 		& -\gamma I \\
		-\gamma I & A^*
	\end{smallmatrix}
	\big]
$.
Since $A$ is invertible, this matrix is invertible if and only if its Schur complement with respect to $A$ is, i.e.,
\[
	0 \neq \det \left( A^*  - \gamma^2 A^{-1} \right) 
	= \det \left( AA^*  - \gamma^2 I \right)/\det(A),
\]
which by definition of singular values, holds if and only if $\gamma$ is not a singular value of $A$.
Thus, the first pencil is regular.
For the second pencil, choosing $\lambda = 0$ results in matrix 
$
	\begin{bsmallmatrix}
		\eta I		& \gamma(\eta-1) I \\
		0 		& A 
	\end{bsmallmatrix}
$,	
which is invertible since $\eta \neq 0$ and $A$ is invertible.
Thus the second pencil is also regular.  Furthermore, this argument also establishes that 
zero cannot be an eigenvalue of the second pencil.

We now show that the two pencils do not have any eigenvalues in common.  We begin by noting that
some of the eigenvalues of the first pencil are infinity, while all the eigenvalues of the second pencil 
are finite.
Let $\lambda \neq 0$ be a finite eigenvalue of the first pencil
with eigenvector $\begin{bsmallmatrix} v \\ u \end{bsmallmatrix}	\neq 0$, hence 
\beq
	\label{eq:pen1_mat_Q0}
	\begin{bmatrix}
		A 				& -\gamma I \\
		\lambda\gamma I	& -\lambda A^*
	\end{bmatrix}
	\begin{bmatrix}
		v \\ u 
	\end{bmatrix}	
	= 0
	\qquad 
	\Longleftrightarrow
	\qquad
	\begin{aligned}
	Av &= \gamma u \\
	A^*u &= \gamma v.
	\end{aligned}
\eeq
Since $\gamma \neq 0$ and $A$ is invertible, $u = 0$ if and only if $v = 0$, 
hence neither are zero.  This means they can be rescaled to each have unit norm and so 
$\gamma$ is a singular value of $A$, a contradiction, hence $\lambda = 0$ must hold.
As zero cannot be an eigenvalue of the second pencil, this part of the proof is complete.

For the reverse direction, first suppose that $A$ is singular.  
Then zero must be an eigenvalue of the second pencil, hence the two pencils
share zero as an eigenvalue and so $\mathcal{Q}_0$ is singular.
Now suppose that $\gamma$ is a singular value of $A$ with left and right singular vectors $\tilde u$ and $\tilde v$
and again consider \eqref{eq:pen1_mat_Q0}.
Since $\begin{bsmallmatrix} v \\ u\end{bsmallmatrix} \coloneqq \begin{bsmallmatrix} \tilde v \\ \tilde u\end{bsmallmatrix} \neq 0$ 
is in the nullspace of the matrix given in \eqref{eq:pen1_mat_Q0} for any $\lambda \in \C$, the first pencil matrix is not regular
and so $\mathcal{Q}_0$ is singular.
\end{proof}

\begin{remark}
It is also possible to derive a globality certificate based on arcs instead of radial segments, where 
for $\hat r > 1$, angle(s) $\hat \theta$ satisfying $h(\hat r,\hat \theta) = h(\hat r,\hat \theta + \eta) = \gamma$ are sought.  We also considered this, but it resulted in a quadratic eigenvalue problem
where both of the corresponding $\mathcal{Q}_0$ and $\mathcal{Q}_2$ matrices were singular,
and so it was unclear if this alternative quadratic eigenvalue problem was well posed or not.
\end{remark}

\subsection{Adapting \cref{alg:owr,alg:tri} for discrete-time $\Kcon$}
\label{sec:disc_adapt_algs_vari}
Now following the variable-distance certificate idea from \S\ref{sec:alternatives}, 
we derive a new discrete-time version to extend \cref{alg:owr,alg:tri} so that they can compute discrete-time Kreiss constants.

\begin{theorem}
\label{thm:kinv_dr_vari}
For $A \in \C^{\n \times \n}$ with $\rho(A) < 1$,
let $\gamma \in [0,1)$,
$\eta \geq 0$, and $(r_\star,\theta_\star)$ be a global minimizer of \eqref{eq:kinv_disc}.
If $\Kinv \leq \gamma$ and $\eta \in [0,2(\gamma - \Kinv)]$, then 
there exists an $r > 1$ and $\theta \in [0,2\pi)$ such that
\beq
	\label{eq:kinv_dr_vari}
	h(r,\theta) = h(\beta r + \delta,\theta) = \gamma,
\eeq
where $\beta \coloneqq 1 - \delta$ and $\delta \coloneqq -\tfrac{\eta}{1 + \gamma}$.
\end{theorem}

\begin{corollary}
\label{cor:kinv_dr_vari}
For $A \in \C^{\n \times \n}$ with $\rho(A) < 1$, let $\gamma \in [0,1)$, $\eta \geq 0$,
and $(r_\star,\theta_\star)$ be a global minimizer of \eqref{eq:kinv_disc}.
If there do not exist any pairs $r,\theta \in \R$ with $r > 1$ such that \eqref{eq:kinv_dr_vari} holds,
then 
\beq
	\label{eq:kinv_dr_vari_lb}
	\Kinv > \gamma - \tfrac{\eta}{2x_\star}.
\eeq
\end{corollary}

\begin{proof}[Proof of \cref{thm:kinv_dr_vari}]
The proof follows similarly to the proof of \cref{thm:kinv_dr}, except that
\eqref{eq:kinv_dr_vari} corresponds to a distance of  
$\tilde \eta \coloneqq (\beta r + \delta) - r = \tfrac{\eta (r - 1)}{1 + \gamma}$
between the two level-set points,
which leads to cancellation, and so $\eta \in [0,2(\gamma - \Kinv)]$.
\end{proof}

For brevity, we skip showing the lengthy derivation of the resulting discrete-time variable-distance certificate
(it follows similarly to \S\ref{sec:certify_disc_fixed}),
and instead just give the key parts necessary to perform the computation.
Suppose $\gamma$ is both a singular value of $H(r,\theta)$ and $H(\beta r + \delta,y)$ 
with respective left and right singular vectors pairs $u$,$v$ and $\hat u$,$\hat v$.
Applying \cref{thm:1d_circle} to $H(r,\theta)$ and $H(\beta r + \delta,\theta)$ yields 
\eqref{eq:eig_pair_radial1} and 	\eqref{eq:eig_pair_radial2} but now with $r$ replaced by $\beta r + \delta$.
For this modified pair of sympletic eigenvalue problems, the certificate derivation results in the large quadratic eigenvalue problem
\begin{align}
	\label{eq:large_eig_dr_vari}
  	&\mathcal{\widetilde Q}_0 w + r \mathcal{\widetilde Q}_1 w + r^2 \mathcal{\widetilde Q}_2 w = 0, 
	\quad \text{where} \\
	\nonumber
	& \mathcal{\widetilde Q}_0 = \mathcal{Q}_0 \ \text{given in \eqref{eq:large_eig_dr} but with $\eta$ replaced by $\delta$}, \\
	\nonumber
	\begin{split}
	&\mathcal{\widetilde Q}_1 = 
	\beta
	\left(
	\begin{bsmallmatrix}
		0 		& \gamma I \\
		0 		& I
	\end{bsmallmatrix}
	\otimes
	\begin{bsmallmatrix}
		A 		& -\gamma I \\
		0		& 0
	\end{bsmallmatrix}
	-
	\begin{bsmallmatrix}
		I		& 0 \\
		\gamma I 	& 0
	\end{bsmallmatrix}
	\otimes
	\begin{bsmallmatrix}
		0 		& 0 \\
		-\gamma I & A^*	
	\end{bsmallmatrix}	
	\right) \\
	& \qquad \qquad \qquad \qquad  +
	\left(
	\begin{bsmallmatrix}
		\overline A 	& \gamma (\delta-1) I \\
		0			& \delta I	
	\end{bsmallmatrix}
	\otimes
	\begin{bsmallmatrix}
		0 		& \gamma I \\
		0 		& I
	\end{bsmallmatrix}	
	-
	\begin{bsmallmatrix}
		\delta I			& 0 \\
		\gamma(\delta-1) I 	& A\tp 
	\end{bsmallmatrix}
	\otimes
	\begin{bsmallmatrix}
		I 		& 0\\
		\gamma I 	& 0 
	\end{bsmallmatrix}
	\right),
	\end{split}
	\\
	\nonumber
	&\mathcal{\widetilde Q}_2 = \beta \mathcal{Q}_2 \ \text{given in \eqref{eq:large_eig_dr}}.
\end{align}
Although $\mathcal{\widetilde Q}_2$ is singular, $\mathcal{\widetilde Q}_0$ is nonsingular
for the assumptions of \cref{thm:Q0_nonsing}, since $\eta \coloneqq \delta \neq 0$.
Thus, \eqref{eq:large_eig_dr_vari} is well posed under the same assumptions.

\subsection{Adapting divide-and-conquer for discrete-time $\Kcon$}
\label{sec:fast_eig_disc}
We now show how the real eigenvalues of \eqref{eq:large_eig_dr} and \eqref{eq:large_eig_dr_vari}
may be computed using divide-and-conquer.
We begin with \eqref{eq:large_eig_dr} and form its (companion) linearization
\beq
	\label{eq:linearization}
	\begin{bmatrix} 
		\mathcal{Q}_1 	& \mathcal{Q}_0 \\ 
		-I 	& 0
	\end{bmatrix}
	z
	= 
	r
	\begin{bmatrix}
		-\mathcal{Q}_2 & 0 \\
		0 	& - I
	\end{bmatrix}
	z,
\eeq
where $z = \begin{bsmallmatrix} rw\\ w \end{bsmallmatrix}$.
Assuming an a priori upper bound $D > 0$ is known for 
all real eigenvalues of \eqref{eq:large_eig_dr},
divide-and-conquer will sweep the interval $[1,D]$ to 
find all the real-valued eigenvalues in this range.
We now detail how the necessary operations with 
the matrices in \eqref{eq:linearization} can all be done in at most $\bigO(\n^3)$ work.

Consider doing matrix-vector products with either matrix in \eqref{eq:linearization} 
and a vector $w = \begin{bsmallmatrix} w_1\\ w_2 \end{bsmallmatrix}$,
where $w_1,w_2 \in \C^{2n^2}$.
The nontrivial parts of these products are:
\bseq
\begin{align}
	\mathcal{Q}_0 w_2 &= \mathcal{M}_0 w_2 - \mathcal{N}_0 w_2 \\
	\mathcal{Q}_1 w_1 &= \mathcal{M}_1 w_1 - \mathcal{N}_1 w_1 \\
	\mathcal{Q}_2 w_1 &= \mathcal{M}_2 w_1 - \mathcal{N}_2 w_1,
\end{align}
\eseq
which are equal to the following respective vectorizations:
\bseq
\label{eq:fast_Qmats}
\begin{align}
	\label{eq:fast_Q0}
	\mathcal{Q}_0 w_2 &=
	\vecop
	\left(
	\begin{bsmallmatrix}
		A 	& -\gamma I \\
		0	& 0
	\end{bsmallmatrix}
	W_2
	\begin{bsmallmatrix}
		A^*				& 0		\\
		\gamma (\eta-1) I 	& \eta I 
	\end{bsmallmatrix}
	-
	\begin{bsmallmatrix}
		0 		& 0 \\
		-\gamma I & A^*	
	\end{bsmallmatrix}
	W_2
	\begin{bsmallmatrix}
		\eta I		& \gamma(\eta-1) I \\
		0 		& A 
	\end{bsmallmatrix}
	\right) \\
	\begin{split}
	\mathcal{Q}_1 w_1 &= 
	\vecop
	\Big(
	\begin{bsmallmatrix}
		A 	& -\gamma I \\
		0	& 0
	\end{bsmallmatrix}
	W_1
	\begin{bsmallmatrix}
		0 		& 0	\\
		\gamma I 	& I
	\end{bsmallmatrix}
	+ 
	\begin{bsmallmatrix}
		0 	& \gamma I \\
		0 	& I 		
	\end{bsmallmatrix}
	W_1
	\begin{bsmallmatrix}
		A^*				& 0		\\
		\gamma (\eta-1) I 	& \eta I 
	\end{bsmallmatrix}
	\\ &{} \qquad \qquad \qquad
	- 
	\begin{bsmallmatrix}
		0 		& 0 \\
		-\gamma I & A^*	
	\end{bsmallmatrix}
	W_1	
	\begin{bsmallmatrix}
		I		& \gamma I  \\
		0 		& 0
	\end{bsmallmatrix}
	-
	\begin{bsmallmatrix}
		I 		& 0\\
		\gamma I	& 0
	\end{bsmallmatrix}
	W_1
	\begin{bsmallmatrix}
		\eta I		& \gamma(\eta-1) I \\
		0 		& A 
	\end{bsmallmatrix}
	\Big) 
	\end{split}
	\\
	\mathcal{Q}_2 w_1 &=
	\vecop
	\left(
	\begin{bsmallmatrix}
		0 		& I \\
		0 		& \gamma I 
	\end{bsmallmatrix}
	W_1	
	\begin{bsmallmatrix}	
		0 		& 0 \\
		I 		& \gamma I
	\end{bsmallmatrix}
	-
	\begin{bsmallmatrix}
		\gamma I 	& 0\\
		I 		& 0
	\end{bsmallmatrix}
	W_1	
	\begin{bsmallmatrix}
		\gamma I 	& I\\
		0 		& 0
	\end{bsmallmatrix}
	\right),	
\end{align}
\eseq
where $w_1 = \vecop(W_1)$ and $w_2 = \vecop(W_2)$.
The first two of these can be obtained in $\bigO(\n^3)$ work since
they only involve matrix-matrix products with $2\n \times 2\n$ matrices.
The third can be obtained in $\bigO(\n^2)$ work as
the number of nonzero entries in $\mathcal{Q}_2$ is simply $8\n^2$,
hence one should just store $\mathcal{Q}_2$ in a sparse format.
This is also fortunate as when applying shift-and-invert to a generalized eigenvalue problem $Ax = \lambda Bx$, solvers such as \texttt{eigs} in \matlab\ require that $B$ is provided explicitly, even when
the operator $(A - sB)^{-1}$ is given implicitly as a function handle.

Given a shift $\shift \in \C$ and a vector
$y = \begin{bsmallmatrix} y_1\\ y_2 \end{bsmallmatrix}$, 
where $y_1,y_2 \in \C^{2n^2}$, we now focus on
\beq
	w = 
	\left(
	\begin{bmatrix} 
		\mathcal{Q}_1 	& \mathcal{Q}_0 \\ 
		-I 	& 0
	\end{bmatrix}
	- \shift
	\begin{bmatrix}
		-\mathcal{Q}_2 & 0 \\
		0 	& - I
	\end{bmatrix}
	\right)^{-1}
	y,
\eeq
which is equivalent to
\beq
	\label{eq:si_disc}
	\left(
	\begin{bmatrix} 
		\mathcal{Q}_1 	& \mathcal{Q}_0 \\ 
		-I 	& 0
	\end{bmatrix}
	- \shift
	\begin{bmatrix}
		-\mathcal{Q}_2 & 0 \\
		0 	& - I
	\end{bmatrix}
	\right)
	\begin{bmatrix} w_1 \\ w_2 \end{bmatrix}
	=
	\begin{bmatrix} 
		\mathcal{Q}_1 + \shift \mathcal{Q}_2	& \mathcal{Q}_0 \\ 
		-I 	& \shift I
	\end{bmatrix}
	\begin{bmatrix} w_1 \\ w_2 \end{bmatrix}
	= 
	\begin{bmatrix} y_1 \\ y_2 \end{bmatrix}.
\eeq
The bottom block row provides:
\beq
	\label{eq:v2}
	\shift w_2 = y_2 + w_1.
\eeq
Multiplying the top block row of \eqref{eq:si_disc} by $\shift$ and 
then substituting in \eqref{eq:v2}, we get
\beq
	\mathcal{Q}_0 w_1 + \shift \mathcal{Q}_1 w_1 + \shift ^2 \mathcal{Q}_2 w_1 
	= \shift y_1 - \mathcal{Q}_0 y_2 \eqqcolon \hat y.
\eeq
Using \eqref{eq:fast_Qmats} to obtain the four matrix-vector products above,
we can then solve for $w_1$ via solving the following generalized continuous-time algebraic Sylvester equation:
\beq
	\label{eq:gen_sylv}
	M W_1 \widetilde M^{*} - N W_1 \widetilde N^{*} = \widehat Y,
\eeq
where $w_1 = \vecop(W_1)$, $\hat y = \vecop(\widehat Y)$, 
and the matrix pairs $M,N$ and $\widetilde M^*,\widetilde N^*$ are respectively 
given in \eqref{eq:eig_pair_radial1} and \eqref{eq:MNtildestar}, 
all with $r$ replaced with $\shift$.
Per \cite{KagW89}, solving \eqref{eq:gen_sylv} can be done in $\bigO(n^3)$ work.
Finally, $w_2$ is obtained via \eqref{eq:v2}.

For \eqref{eq:large_eig_dr_vari}, only a few minor modifications to the divide-and-conquer 
variant we have just explained for \eqref{eq:large_eig_dr} are necessary.
As $\mathcal{\widetilde Q}_0$ is equal to $\mathcal{Q}_0$ with $\eta \coloneqq \delta$ and 
$\mathcal{\widetilde Q}_2 = \beta \mathcal{Q}_2$, the first and third equations in \eqref{eq:fast_Qmats}
can be used to do the corresponding matrix-vector products.
For $w_1 \in \C^{2\n^2}$, we have 
\beq
	\begin{split}
	\mathcal{\widetilde Q}_1 w_1 &= 
	\vecop
	\Big(
	\beta
	\Big(
	\begin{bsmallmatrix}
		A 		& -\gamma I \\
		0		& 0
	\end{bsmallmatrix}
	W_1	
	\begin{bsmallmatrix}
		0 		& 0 \\
		\gamma I	& I
	\end{bsmallmatrix}
	-
	\begin{bsmallmatrix}
		0 		& 0 \\
		-\gamma I & A^*	
	\end{bsmallmatrix}	
	W_1
	\begin{bsmallmatrix}
		I		& \gamma I  \\
		0		& 0
	\end{bsmallmatrix}
	\Big) 
	\\ &{} \qquad \qquad \qquad + \Big(
	\begin{bsmallmatrix}
		0 		& \gamma I \\
		0 		& I
	\end{bsmallmatrix}	
	W_1	
	\begin{bsmallmatrix}
		A^*				& 0 \\
		\gamma (\delta-1) I	& \delta I	
	\end{bsmallmatrix}
	-
	\begin{bsmallmatrix}
		I 		& 0\\
		\gamma I 	& 0 
	\end{bsmallmatrix}
	W_1
	\begin{bsmallmatrix}
		\eta I		& \gamma(\delta-1) \\
		0  		& A 
	\end{bsmallmatrix}
	\Big) \Big).
	\end{split}
\eeq

\section{Numerical experiments}
\label{sec:experiments}
To validate our methods for computing Kreiss constants, 
we implemented proof-of-concepts in MATLAB.
The supplementary material includes the code, test examples,
and a detailed description of the setup in order to reproduce the experiments in this paper.
We plan to add ``production-ready" implementations of our methods to a future release of 
ROSTAPACK \cite{rostapack}.

\begin{figure}[tb]
\centering
\resizebox*{13cm}{!}{
\includegraphics[trim={0.9cm 0.0cm 1.5cm 0.4cm},clip]{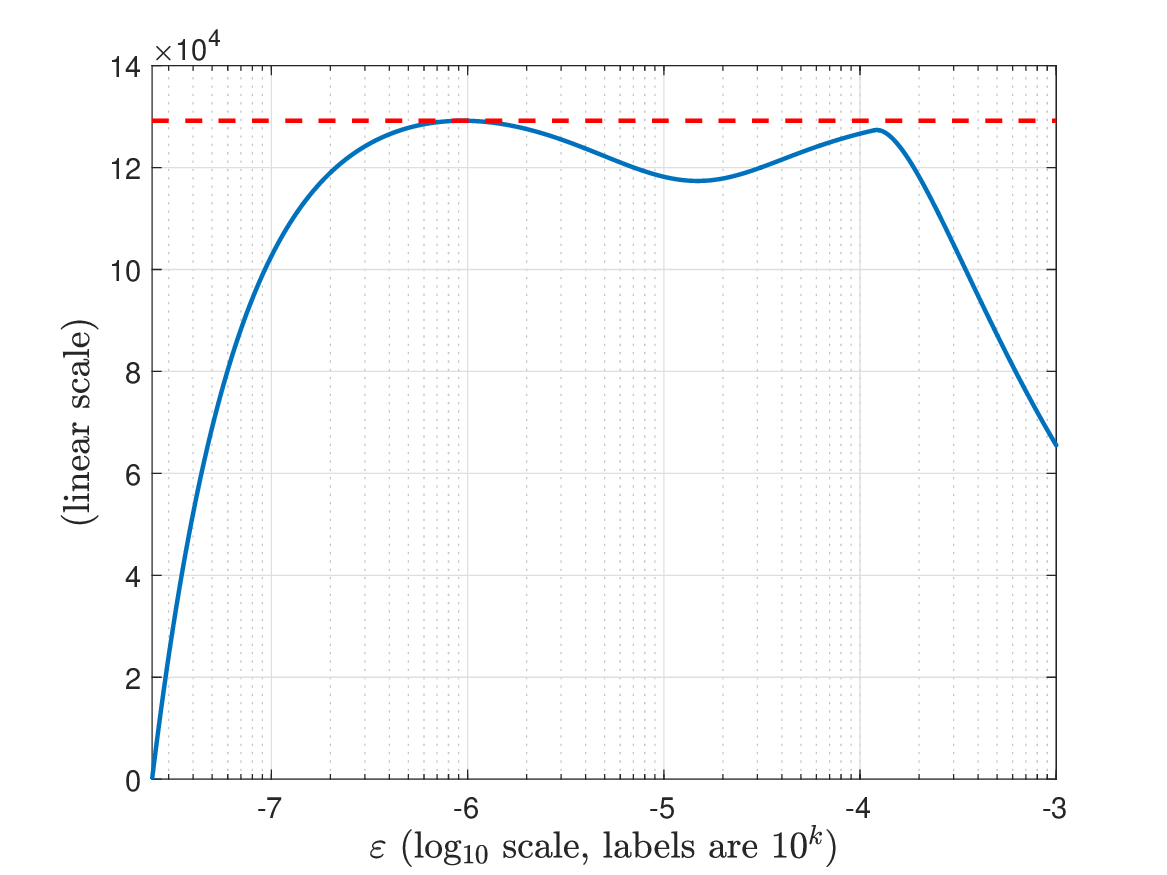}
$\quad$
\includegraphics[trim={0.6cm 0.0cm 1.6cm 0.4cm},clip]{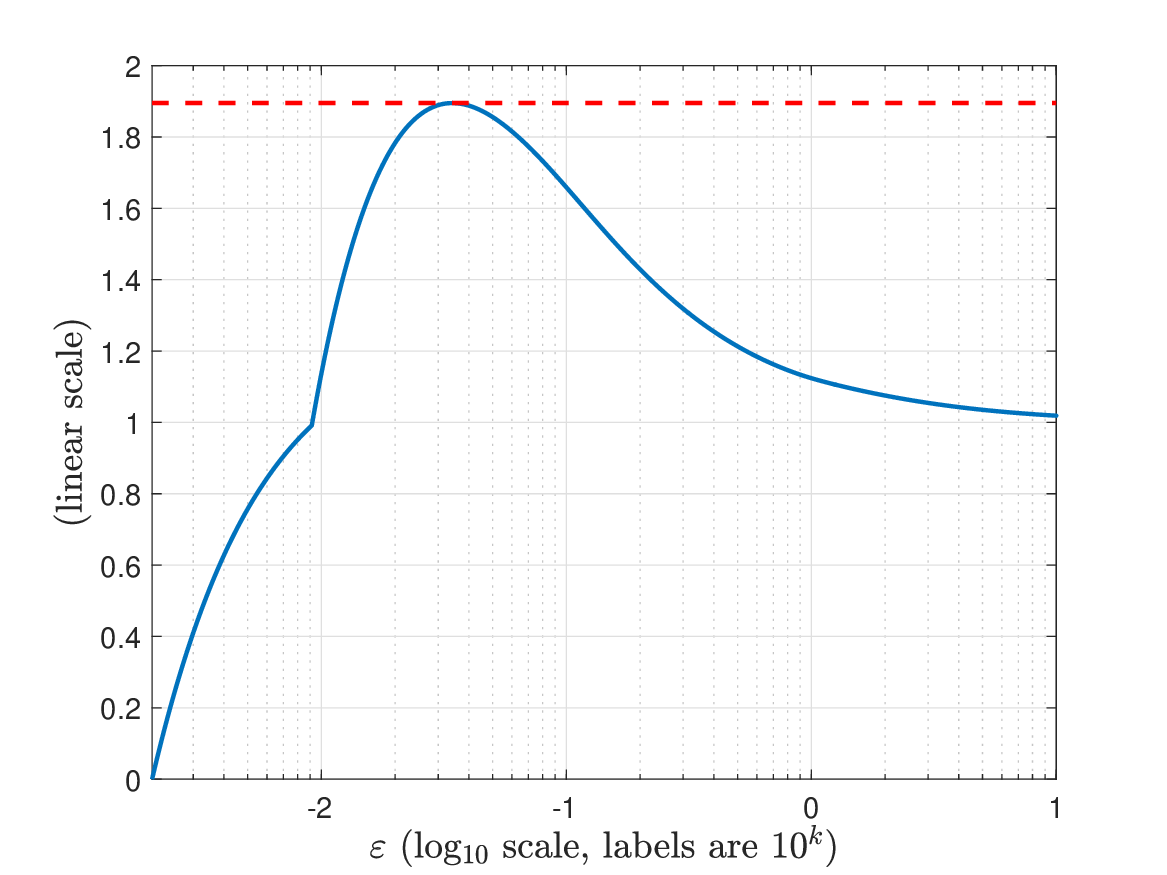}
}
\caption{
The solid curves plot $\tfrac{\aleps(A)}{\eps}$ (left) and $\tfrac{\rhoeps(A)-1}{\eps}$ (right)
as they vary with $\eps$ for our continuous- and discrete-time examples, respectively.
The dashed lines depict the values of $\Kcon$ computed
by our new methods, verifying their global convergence.
The plots were produced using Chebfun \cite{DriHT14}
and the methods of \cite{BenM19}.
}
\label{fig:ratios}
\end{figure}

We considered two $10 \times 10$ stable matrices based on demos in EigTool \cite{eigtool}, 
one for continuous-time $\Kcon$
and a second for discrete-time $\Kcon$.
For the former, we used $A = B - \kappa I$, 
where $B = \texttt{companion\_demo(10)}$ and $\kappa = 1.001\alpha(B)$;
this matrix has a large Kreiss constant, and as shown in \cref{fig:ratios},
\eqref{eq:k1d_cont} has two local maximizers for this example.
For the latter, we chose $A = \tfrac{1}{13}B + \tfrac{11}{10}I$,
where $B = \texttt{convdiff\_demo(11)}$;
while this matrix a small Kreiss constant,
it is interesting for testing as, for this example, 
$h(r,\theta)$ has several local minimizers (see the supplementary material)
and, per \cref{fig:ratios}, \eqref{eq:k1d_disc} appears to be nonsmooth (though not at the maximizer).
To verify restarting, we intentionally chose starting points
such that global minimizers of $g(x,y)$ and $h(r,\theta)$
would not be found in the first round of optimization.
In \cref{table:comp}, we provide detailed metrics on our three algorithms
and see in practice that the optimization-based \cref{alg:owr_bt,alg:owr} are much
faster and more accurate than the trisection-based \cref{alg:tri}.
The much higher numerical accuracy of our two optimization methods is also
verified by comparing to Chebfun, with Chebfun being much slower, 
even though it was given a small interval containing a global maximizer attaining $\Kcon$.
In fact, Chebfun is also much slower than \cref{alg:tri}, which is
the slowest of our three new methods, due to the linear convergence of trisection.
As can be seen, \cref{alg:owr} appears to be the best of the algorithms.
See the supplementary material for additional figures 
showing how \cref{alg:owr_bt,alg:owr} progress one from 
minimizer to the next to converge to $\Kcon$.

We also evaluated our theoretically faster divide-and-conquer approaches.
The supplementary material includes a demo for reproducing one key experiment of the many we performed. 
This demo uses the computations described in \S\ref{sec:fast_eig_cont} and \S\ref{sec:fast_eig_cont_vari} 
to respectively compute eigenvalues of \eqref{eq:large_eig_ctheta} and \eqref{eq:large_eig_cv_vari}
closest to a given shift, where $\gamma$ and $\eta$ were set to the values used in the first 2D level-set test
computed by \cref{alg:owr_bt} when computing $\Kcon$ of our continuous-time example.
However, we observed that these shift-and-invert closest eigenvalue computations were 
somewhat unreliable when compared to using \texttt{eig} in \matlab\ on \eqref{eq:large_eig_ctheta} and \eqref{eq:large_eig_cv_vari} directly.
For space reasons, we forgo many of the details and instead make a few key observations.
First, sometimes the eigenvalues computed by the shift-and-invert approach 
did not have even a single digit of agreement to those computed by \texttt{eig},
but as these eigenvalues were often very close to the origin, this is not so surprising.
Second, we also observed that \texttt{eigs} in \matlab\ would sometimes fail to return one of the closest eigenvalues
to a given shift; interestingly, this phenomenon can even be observed in the 
paper where divide-and-conquer for $\dtu$ was proposed \cite[top left plot of Fig.~3.1]{GuMOetal06}.
Third, we also observed that the large eigenvalue problems 
sometimes have very close conjugate pair eigenvalues, which may explain 
some of the aforementioned difficulties with extracting eigenvalues on the real axis accurately.
As divide-and-conquer is built on the assumption that shift-and-invert eigensolvers
are reliable,
addressing the numerical issues above may just be a matter of picking the right sparse eigensolver.
That being said, our initial testing of divide-and-conquer and its numerical reliability 
is merely a starting point for future work.

\begin{table}[!t]
\centering
\small
\begin{tabular}{ l  c  c  c  c  r } 
\toprule
& $\Kcon$ & $z_0$ & level-set tests & restarts & seconds \\
\midrule
Chebfun            & $1.29186707005845 \times 10^5$       &  ---             &  ---  &  ---  &    97.28\\ 
\cref{alg:owr_bt}  & $1.29186707011257 \times 10^5$       & $6+6\imagunit$   &    14 &     2 &     1.95\\ 
\cref{alg:owr}     & $1.29186707015035 \times 10^5$       & $6+6\imagunit$   &     2 &     2 &     0.76\\ 
\cref{alg:tri}     & $1.29181067678395 \times 10^5$      & $1$              &    89 &  ---  &    21.48\\ 
\midrule
Chebfun            & 1.89501339090609       &  ---             &  ---  &  ---  &    74.04\\ 
\cref{alg:owr_bt}  & 1.89501339090580       & $-1+1\imagunit$  &    15 &     3 &     7.18\\ 
\cref{alg:owr}     & 1.89501339090580       & $-1+1\imagunit$  &     3 &     3 &     1.62\\ 
\cref{alg:tri}     & 1.89501305930067       & $2$              &    81 &  ---  &    41.04\\
\bottomrule
\end{tabular}
\caption{The upper half of the table shows data
for our continuous-time example, while the lower half is for the discrete-time
example. A dash indicates the column is not relevant for the given method.
Chebfun is simply taking the max of the chebfuns produced to make 
\cref{fig:ratios}, where Chebfun was supplied a reasonably small 
interval already known to contain the maximizer attaining $\Kcon$.
}
\label{table:comp}
\end{table}

\section{Concluding remarks}
\label{sec:conclusion}
We have presented the first globally convergent algorithms for computing continuous- and
discrete-time Kreiss constants to arbitrary accuracy.  
\cref{alg:owr}, which uses our variable-distance 2D level-set tests, 
appears to be the fastest of the three different iterations proposed here, since it requires performing the least number of these tests.
However, for continuous-time Kreiss constants, upcoming fast deflation methods \cite[Chapter~3.1]{Koe21} may enable
our fixed-distance 2D level-set test of \S\ref{sec:2d_test_fixed} to be computed with significantly less work, which could
make \cref{alg:owr_bt} more competitive, perhaps even faster than \cref{alg:owr}.
While we have outlined the key theoretical differences between the various large eigenvalue problems
underlying our fixed- and variable-distance certificates,
a systematic numerical evaluation could help illuminate whether there are any notable differences in terms of numerical reliability
when using dense eigensolvers.
Similarly, further investigation into the numerics of our asymptotically faster divide-and-conquer variants using sparse shift-and-invert eigensolvers could prove useful.

\section*{Acknowledgments}
Many thanks to Mark Embree for suggesting to look at the problem 
of computing Kreiss constants and to Michael L. Overton for 
pointing out the possible connection to computing the distance to uncontrollability and hosting the author at the Courant Institute in New York for several visits.
The author is also grateful for the many useful suggestions from the referees, one of whom 
deserves special mention for pointing out both a more concise proof of \cref{lem:foverx} and the simpler explicit inverse formulas for 
$\mathcal{B}_2$ and $\mathcal{\widetilde B}_2$ respectively given by \eqref{eq:B2_inv} and \eqref{eq:B2tilde_inv}.

\bibliographystyle{alpha}
\bibliography{csc,mor,software}

\appendix

\section{A 2D level-set test for variable-distance pairs with horizontal orientation}
\label{apdx:2d_test_ch_vari}
If we modify \cref{thm:kinv_ch} to consider horizontal pairs of points 
a variable distance $\tfrac{x\eta}{1 + \gamma}$ apart 
(instead a fixed distance $\eta$ apart), to induce cancellation in the proof, 
we obtain the following results.

\begin{theorem}
\label{thm:kinv_ch_vari}
For $A \in \C^{\n \times \n}$ with $\alpha(A) < 0$,
let $\gamma \in [0,1)$,
$\eta \geq 0$, and $(x_\star,y_\star)$ be a global minimizer of \eqref{eq:kinv_cont}.
If $\Kinv \leq \gamma$ and $\eta \in [0,2(\gamma - \Kinv)]$, then 
there exists a pair $x,y \in \R$ with $x > 0$ such that
\beq
	\label{eq:kinv_ch_vari_ub}
	g(x,y) = g(\beta x,y) = \gamma,
\eeq
where $\beta \coloneqq 1 + \tfrac{\eta}{1 + \gamma}$.
\end{theorem}

\begin{corollary}
\label{cor:kinv_ch_vari}
For $A \in \C^{\n \times \n}$ with $\alpha(A) < 0$, let $\gamma \in [0,1)$, $\eta \geq 0$,
and $(x_\star,y_\star)$ be a global minimizer of \eqref{eq:kinv_cont}.
If there do not exist any pairs $x,y \in \R$ with $x > 0$ such that \eqref{eq:kinv_ch_vari_ub} holds,
then 
\beq
	\label{eq:kinv_ch_vari_lb}
	\Kinv > \gamma - \tfrac{\eta}{2x_\star}.
\eeq
\end{corollary}

\begin{proof}[Proof of \cref{thm:kinv_ch_vari}]
The proof follows similarly to the proof of \cref{thm:kinv_ch}, except that
\eqref{eq:kinv_ch_vari_ub} corresponds to a distance of $\tilde \eta \coloneqq \beta x - x = \tfrac{\eta x}{1+\gamma}$ between the two level-set points, which leads to cancellation, and so $\eta \in [0,2(\gamma - \Kinv)]$.
\end{proof}

The derivation of the corresponding verification procedure, to enable 
versions of \cref{alg:owr,alg:tri} using \eqref{eq:kinv_ch_vari_ub}, is as follows.
Suppose $\gamma$ is both a singular value of $G(x,y)$ and $G(\beta x,y)$ 
with respective left and right singular vectors pairs $u$,$v$ and $\hat u$,$\hat v$.
Applying \cref{thm:1d_vert} to $G(x,y)$ and  $G(\beta x,y)$ yields the 
following two Hamiltonian standard eigenvalue problems
\beq
\label{eq:eig_pair_ch_vari}
	\begin{bmatrix}
		A - xI 		& \gamma xI 	\\
		-\gamma xI  	& xI -A^*  		\\
	\end{bmatrix}
	\begin{bmatrix} v \\ u \end{bmatrix}
	= \imagunit y \begin{bmatrix} v \\ u \end{bmatrix} 
	\ \text{and} \
	\begin{bmatrix}
		A - \beta xI 		& \gamma \beta x I	 	\\
		-\gamma \beta x I 	& \beta x I - A^*  \\
	\end{bmatrix}
	\begin{bmatrix} \hat v \\ \hat u \end{bmatrix}
	= \imagunit y \begin{bmatrix} \hat v \\ \hat u \end{bmatrix},
\eeq
which have a common eigenvalue if
\beq
	\label{eq:sylv_ch_vari}
	\begin{bmatrix}
		A - xI 		& \gamma xI 	\\
		-\gamma xI  	& xI -A^*  		\\
	\end{bmatrix}
	W + W
	\begin{bmatrix}
		A^* - \beta xI 		& -\gamma \beta x I	\\
		\gamma \beta x I 	& \beta x I - A  		\\
	\end{bmatrix}
	= 0
\eeq
has a nonzero solution $W \in \C^{2\n\times2\n}$.
Separating this into two Sylvester forms to isolate $x$ and then vectorizing
yields the generalized eigenvalue problem
\beq
	\label{eq:large_eig_ch_vari}
	\mathcal{B}_1 w = x \mathcal{\widetilde B}_2 w, 
	\quad \text{where} \quad
	\mathcal{\widetilde B}_2 = 
	I_{2n} \otimes 
	\begin{bmatrix} 
		I 			& -\gamma I \\ 
		\gamma I  	& -I 
	\end{bmatrix} 
	+
	\beta
	\begin{bmatrix} 
		I 		& -\gamma I \\ 
		\gamma I  & -I 
	\end{bmatrix} 
	\otimes I_{2n},
\eeq
$\mathcal{B}_1$ is defined in \eqref{eq:large_eig_cv_vari}, and  $w = \vecop(W)$.
Applying \cref{thm:kronsum} to $\mathcal{\widetilde B}_2$, we see that
its eigenvalues are the pairwise sums of $\pm \sqrt{1 - \gamma^2}$ and 
$\pm \beta \sqrt{1 - \gamma^2}$, hence it is nonsingular if and only if 
$\beta \neq \pm 1$ and $\gamma \neq \pm 1$.
Furthermore, via \cref{lem:b2tilde_inverse}, 
\beq
	\label{eq:B2tilde_inv}
	\mathcal{\widetilde B}_2^{-1} = 	2\omega\phi \mathcal{\widetilde B}_2 - \omega \mathcal{\widetilde B}_2^3,
\eeq
where $\phi \coloneqq (1 + \beta^2)(1 - \gamma^2)$ and $\omega \coloneqq (1-\beta^2)^{-2}(1 - \gamma^2)^{-2}$.
However, as $\eta \to 0$, $\beta \to 1$, and so $\mathcal{\widetilde B}_2$
will become closer and closer to being singular as the algorithms progress.
This suggests that it would be better numerically to solve \eqref{eq:large_eig_ch_vari}
as a generalized eigenvalue problem, rather than solving the standard eigenvalue problem 
$\mathcal{\widetilde B}_2^{-1}\mathcal{B}_1$.
In terms of the number of arithmetic operations, 
if $A$ is real, using divide-and-conquer to compute the positive real eigenvalues of 
\eqref{eq:large_eig_ch_vari} only involves matrix problems with real coefficients.
The two eigenvalue problems in \eqref{eq:eig_pair_ch_vari} also have real coefficients when $A$ is real.
Given $s \in \R$, $y = (\mathcal{B}_1 - s\mathcal{\widetilde B}_2)^{-1}w$ can be computed 
by solving the Sylvester equation \eqref{eq:sylv_ch_vari} 
with $x$ replaced by $s$ and its zero right hand side replaced by $Y$, 
where $w = \vecop(W)$ and $y = \vecop(Y)$.
In contrast, the computations for the 
divide-and-conquer variant from \S\ref{sec:fast_eig_cont_vari} requires complex arithmetic.

\section{Some technical results}
\label{apdx:lemmas}

\begin{lemma}
\label{lem:tri_eta}
Let $\{\gamma_j\}$ be the iterates produced by trisection with $\{\gamma_j\} \to \gamma_\star$,
and suppose that trisection terminates at the $k$th iterate with $| \gamma_\star - \gamma_k | \leq \psi \gamma_\star$ 
holding for some given relative error tolerance $\psi > 0$.  
Then $\eta_k \leq  (1+\psi) \gamma_\star$.
\end{lemma}
\begin{proof}
By construction,
$\gamma_k = L + \eta_k$ and $L \geq 0$ so $\eta_k \leq \gamma_k$ always holds.
If $\gamma_k \geq \gamma_\star$, then
$\psi \gamma_\star \geq | \gamma_\star - \gamma_k | = \gamma_k - \gamma_\star$, which implies
\mbox{$\eta_k \leq  (1+\psi) \gamma_\star$}.
Otherwise, $\gamma_k < \gamma_\star$ must hold, and so it follows that 
\mbox{$\psi \gamma_\star \geq | \gamma_\star - \gamma_k | = \gamma_\star - \gamma_k > \gamma_k - \gamma_\star$}.
\end{proof}

\begin{corollary}
\label{cor:rel_acc}
Given a fixed $\psi > 0$, if 
$\eta_k > (1+\psi) \gamma_\star$, then the relative error of trisection at the $k$th iterate is bounded below by $\psi$, 
specifically $| \gamma_\star - \gamma_k | > \psi \gamma_\star$.  
\end{corollary}

\begin{lemma}
\label{lem:b2tilde_inverse}
Let $C \coloneqq \begin{bsmallmatrix} aI & -bI \\ bI & -aI\end{bsmallmatrix} \in \C^{2n \times 2n}$ with $a,b \in \C$
and let $\beta \in \C$.
Then the matrix $\mathcal{D} \coloneqq I_{2n} \otimes C + \beta C \otimes I_{2n} $
is invertible if and only if $a^2 \neq b^2$ and $\beta^2 \neq 1$, where 
\beq
	\mathcal{D}^{-1} = 
	2\omega\phi D - \omega \mathcal{D}^3 ,
	\eeq
$\phi \coloneqq (1 + \beta^2)(a^2 - b^2)$, and $\omega \coloneqq (1-\beta^2)^{-2}(a^2 - b^2)^{-2}$.
\end{lemma}
\begin{proof}
First note that $C^2 = (a^2 - b^2)I_{2n}$, while using the mixed-product property of $\otimes$ yields
\begin{align*}
	\mathcal{D}^2 
	&= (I_{2n} \otimes C^2 + \beta^2 C^2 \otimes I_{2n}) + \beta (C \otimes C + C \otimes C) \\
	&= (1+\beta^2)(a^2 - b^2)I_{4n^2} + 2\beta C\otimes C = \phi I_{4n^2} + 2\beta C\otimes C.
\end{align*}
Using this equivalence for $\mathcal{D}^2$, the formula for $\mathcal{D}^{-1}$ is verified via 
\begin{align*}
	\mathcal{D}^{-1}\mathcal{D} 
	= \omega ( 2\phi I_{4n^2} - \mathcal{D}^2 ) \mathcal{D}^2 
	&= \omega ( 2\phi I_{4n^2} - (\phi I_{4n^2} + 2\beta C\otimes C) ) (\phi I_{4n^2} + 2\beta C\otimes C) \\
	&= \omega ( \phi I_{4n^2} - 2 \beta C\otimes C ) (\phi I_{4n^2} + 2\beta C\otimes C) \\
	&= \omega (\phi^2 I_{4n^2} - 4 \beta^2 C^2 \otimes C^2) \\
	&= \omega (\phi^2 - 4\beta^2(a^2 - b^2)^2) I_{4n^2},
\end{align*}
which, by noting $\phi^2 - 4\beta^2 (a^2 - b^2)^2 = \omega^{-1}$, is equivalent to $I_{4n^2}$.
\end{proof}

\begin{lemma}
\label{lem:q2_sing}
Matrix $\mathcal{Q}_2$ from \eqref{eq:large_eig_dr} is singular.
\end{lemma}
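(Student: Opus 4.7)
The plan is to exploit the Kronecker-product structure of $\mathcal{Q}_2$ and argue singularity via a rank count. Recall that $\mathcal{Q}_2 = \mathcal{M}_2 - \mathcal{N}_2$, where, reading off the coefficients of $r^2$ in the derivation preceding \eqref{eq:large_eig_disc},
\[
\mathcal{M}_2 = \begin{bsmallmatrix} 0 & \gamma I \\ 0 & I \end{bsmallmatrix} \otimes \begin{bsmallmatrix} 0 & \gamma I \\ 0 & I \end{bsmallmatrix}, \qquad
\mathcal{N}_2 = \begin{bsmallmatrix} I & 0 \\ \gamma I & 0 \end{bsmallmatrix} \otimes \begin{bsmallmatrix} I & 0 \\ \gamma I & 0 \end{bsmallmatrix}.
\]
Each of the four $2n\times 2n$ factors has one block column equal to zero, so each factor has rank exactly $n$.

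Using the standard identity $\mathrm{rank}(A\otimes B) = \mathrm{rank}(A)\,\mathrm{rank}(B)$, I then conclude $\mathrm{rank}(\mathcal{M}_2) = \mathrm{rank}(\mathcal{N}_2) = n^2$. Subadditivity of rank gives $\mathrm{rank}(\mathcal{Q}_2) \leq \mathrm{rank}(\mathcal{M}_2) + \mathrm{rank}(\mathcal{N}_2) = 2n^2$, while $\mathcal{Q}_2$ is a $4n^2\times 4n^2$ matrix. Since $2n^2 < 4n^2$ for every $n\geq 1$, this forces $\mathcal{Q}_2$ to be singular.

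There is essentially no obstacle; the only thing worth double-checking is the claim about the ranks of the block $2n\times 2n$ factors, which is immediate since each has a zero block column of width $n$ and an independent block column of width $n$. If an explicit null vector were preferred, one can take $W = \begin{bsmallmatrix} 0 & I \\ 0 & 0 \end{bsmallmatrix}$ and verify directly (by ``unvectorizing'' $\mathcal{Q}_2 w$) that the left factor of $\mathcal{M}_2$ annihilates $W$ on the left and the right factor of $\mathcal{N}_2$ annihilates $W$ on the right, so $\mathcal{Q}_2\,\mathrm{vec}(W) = 0$; but the rank argument is shorter and avoids any computation.
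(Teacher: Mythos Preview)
Your proof is correct and takes a genuinely different route from the paper's. The paper proceeds by contradiction via the generalized Sylvester framework used elsewhere in the article: it unvectorizes $\mathcal{Q}_2 w = 0$ into a generalized Sylvester equation and then invokes \cite[Theorem~1]{Chu87}, which says that equation has a unique solution if and only if the two associated matrix pencils are regular and share no eigenvalue; since both pencils have zero as an eigenvalue, uniqueness fails and $\mathcal{Q}_2$ is singular. Your argument bypasses all of this machinery: you simply note that each Kronecker factor has rank $n$, use $\mathrm{rank}(A\otimes B)=\mathrm{rank}(A)\,\mathrm{rank}(B)$ and subadditivity to bound $\mathrm{rank}(\mathcal{Q}_2)\le 2n^2 < 4n^2$. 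This is shorter, entirely elementary, and as a bonus gives quantitative information (the nullspace of $\mathcal{Q}_2$ has dimension at least $2n^2$) that the paper's argument does not. The paper's approach has the modest advantage of staying within the Sylvester-equation language already established in the surrounding sections, but for this particular lemma your rank count is the cleaner proof.
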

\begin{proof}
Suppose that $\mathcal{Q}_2 w = 0$ if and only if $w = 0$, and so 
\[
	\begin{bsmallmatrix}
		0 		& \gamma I \\
		0 		& I
	\end{bsmallmatrix}
	\otimes
	\begin{bsmallmatrix}
		0 		& \gamma I \\
		0 		& I		
	\end{bsmallmatrix}
	w
	-
	\begin{bsmallmatrix}
		I 		& 0\\
		\gamma I	& 0 
	\end{bsmallmatrix}
	\otimes
	\begin{bsmallmatrix}
		I 		& 0\\
		\gamma I	& 0
	\end{bsmallmatrix}
	w 
	= 0
	\ \Leftrightarrow \
	\begin{bsmallmatrix}
		0 		& \gamma I \\
		0 		& I 
	\end{bsmallmatrix}
	W	
	\begin{bsmallmatrix}	
		0 		& 0 \\
		\gamma I 	& I
	\end{bsmallmatrix}
	-
	\begin{bsmallmatrix}
		I 		& 0\\
		\gamma I 	& 0
	\end{bsmallmatrix}
	W
	\begin{bsmallmatrix}
		I 		& \gamma I \\
		0 		& 0
	\end{bsmallmatrix}
	= 0,
\]
where the equivalence holds by unvectorizing the first equation
and $w = \vecop(W)$.
This generalized Sylvester equation has a unique solution
if and only if 
\[
	\begin{bsmallmatrix}
		0 	& \gamma I \\
		0 	& I 		
	\end{bsmallmatrix}
	- \lambda 
	\begin{bsmallmatrix}
		I 		& 0\\
		\gamma I	& 0 
	\end{bsmallmatrix}
	\qquad \text{and} \qquad
	\begin{bsmallmatrix}
		I		& \gamma I  \\
		0 		& 0
	\end{bsmallmatrix}
	- \lambda
	\begin{bsmallmatrix}
		0 		& 0	\\
		\gamma I 	& I
	\end{bsmallmatrix}
\]
are both regular matrix pencils and have no eigenvalues in common.
However, as 
$
	\begin{bsmallmatrix}
		0 	& \gamma I \\
		0 	& I 		
	\end{bsmallmatrix}
$
and 
$
	\begin{bsmallmatrix}
		I		& \gamma I  \\
		0 		& 0
	\end{bsmallmatrix}
$
are both singular, the two pencils share zero as an eigenvalue.
\end{proof}

\end{document}